\definecolor{linkB}{RGB}{4, 106, 143}
\definecolor{defRED}{RGB}{84, 4, 18}
\newcommand{\defin}[1]{\textcolor{defRED}{\emph{#1}}\index{#1}}
\declaretheorem[name=Lemma, numberwithin = section]{lemma}
\declaretheorem[name=Theorem,sibling = lemma]{theorem}
\declaretheorem[name=Proposition, sibling=lemma]{proposition}
\declaretheorem[name=Corollary, sibling=lemma]{corollary}
\declaretheorem[name=Problem, sibling = lemma]{problem}
\declaretheorem[name=Claim]{claim}
\newenvironment{subproof}[1][\proofname]{%
  \begin{proof}[#1]%
}{%
  \end{proof}%
}
\crefname{theorem}{Theorem}{Theorems}
\crefname{proposition}{Proposition}{Propositions}
\crefname{lemma}{Lemma}{Lemmas}
\crefname{claim}{Claim}{Claims}
\crefname{subclaim}{Sub-Claim}{Sub-Claims}
\crefname{observation}{Observation}{Observations}
\crefname{remark}{Remark}{Remarks}
\crefname{corollary}{Corollary}{Corollaries}
\crefname{definition}{Definition}{Definitions}
\crefname{conjecture}{Conjecture}{Conjectures}
\crefname{question}{Question}{Questions}
\DeclareMathOperator*{\argmin}{arg\,min}
\newcommand{\cartprod}{\mathbin{\Box}}
\newcommand{\fracenter}[1]{\overset{\mathfrak{g}}{#1}}
\newcommand{\gcap}{\fracenter{\cap}}
\newcommand{\gcup}{\fracenter{\cup}}
\newcommand{\biggcap}{\fracenter{\bigcap}}
\newcommand{\biggcup}{\fracenter{\bigcup}}
\title{\LARGE{Graphs of bounded chordality}}
\author{Aristotelis Chaniotis
\\
\vspace{-0.4cm}
Department of Combinatorics and Optimization, \\University of Waterloo, Waterloo, Ontario, N2L 3G1, Canada
\\
\vspace{0.5cm}
Babak Miraftab
\\School of Computer Science, \\Carleton University, Ottawa, Ontario, K1S 5B6, Canada\\
\vspace{0.5cm}
Sophie Spirkl\thanks{We acknowledge the support of the Natural Sciences and Engineering Research Council of Canada (NSERC), [funding
reference number RGPIN-2020-03912]. \newline Cette recherche a \'et\'e financ\'ee par le Conseil de recherches en sciences naturelles et en g\'enie du Canada (CRSNG),
[num\'ero de r\'ef\'erence RGPIN-2020-03912]. \\ This project was funded in part by the Government of Ontario.}\\
Department of Combinatorics and Optimization, \\University of Waterloo, Waterloo, Ontario, N2L 3G1, Canada
}
\date{\today}
\begin{document}

\maketitle

\fontsize{12}{16}\selectfont

\begin{abstract}
A {\em hole} in a graph is an induced subgraph which is a cycle of length at least four. A graph is {\em chordal} if it contains no holes. Following McKee and Scheinerman (1993), we define the {\em chordality} of a graph $G$ to be the minimum number of chordal graphs on $V(G)$ such that the intersection of their edge sets is equal to $E(G)$. In this paper we study classes of graphs of bounded chordality.

In the 1970s, Buneman, Gavril, and Walter, proved independently that chordal graphs are exactly the intersection graphs of subtrees in trees. We generalize this result by proving that the graphs of chordality at most $k$ are exactly the intersection graphs of convex subgraphs of median graphs of tree-dimension $k$. 

A hereditary class of graphs $\mathcal{A}$ is {\em $\chi$-bounded} if there exists a function $ f\colon \mathbb{N}\rightarrow \mathbb{R}$ such that for every graph $G\in \mathcal{A}$, we have $\chi(G) \leq f(\omega(G))$. In 1960, Asplund and Gr\"unbaum proved that the class of all graphs of boxicity at most two is $\chi$-bounded. In his seminal paper "Problems from the world surrounding perfect graphs," Gy\'arf\'as (1985), motivated by the above result, asked whether the class of all graphs of chordality at most two, which we denote by $\mathcal{C}\gcap \mathcal{C}$, is $\chi$-bounded. We discuss a result of Felsner, Joret, Micek, Trotter and Wiechert (2017), concerning tree-decompositions of Burling graphs, which implies an answer to Gy\'arf\'as' question in the negative. We prove that two natural families of subclasses of $\mathcal{C}\gcap \mathcal{C}$ are polynomially $\chi$-bounded.

Finally, we prove that for every $k\geq 3$ the $k$-\textsc{Chordality Problem}, which asks to decide whether a graph has chordality at most $k$, is $\NP$-complete.
\end{abstract}

\section{Introduction}
\label{sec:introduction}
For basic notions and notation not defined here we refer readers to \cite{west2020combinatorial}.  In this paper we consider finite, undirected graphs with no loops or parallel edges. For a set $S$ we denote the power set of $S$ by $2^{S}$, and the set of all size-two elements of $2^{S}$ by $\binom{S}{2}$. Let $G$ be a graph. We call a subset of $V(G)$ a \defin{clique} (respectively\ a \defin{stable} set) of $G$ if it is a set of pairwise adjacent (respectively non-adjacent) vertices. A clique of size three is called a \defin{triangle}. The \defin{clique number} of $G$, denoted by $\omega(G)$, is the maximum size of a clique in $G$. For vertices $u,v\in V(G)$ a \defin{$(u,v)$-path} in $G$ is a path which has as ends the vertices $u$ and $v$. A \defin{non-edge} of $G$ is an element of the set $\binom{V(G)}{2} \setminus E(G)$. Given a graph $H$ we say that $G$ is \defin{$H$-free} (respectively \defin{contains $H$}) if it contains no (respectively contains an) induced subgraph isomorphic to $H$. For a set $X\subseteq V(G)$, we denote by $G[X]$ the subgraph of $G$ which is induced by $X$. A class of graphs is \defin{hereditary} if it is closed under isomorphism and under taking induced subgraphs.

Let $G_{1}, \ldots, G_{k}$ be graphs. Then, their \defin{intersection} (respectively \defin{union}), which we denote by $\cap_{i\in[k]}G_{i}$ (respectively $\cup_{i\in[k]}G_{i}$), is the graph $(\cap_{i\in[k]}V(G_{i}), \cap_{i\in[k]}E(G_{i}))$ (respectively $(\cup_{i\in[k]}V(G_{i}), \cup_{i\in[k]}E(G_{i}))$). Given graph classes $\mathcal{G}_{1}, \ldots , \mathcal{G}_{k}$, we denote by $\mathcal{G}_{1} \gcap \ldots \gcap \mathcal{G}_{k}$ the class $\{G: \exists G_{i} \in \mathcal{G}_{i} \text{ such that } G=G_{1}\cap \ldots \cap G_{k}\}$, which we call the \defin{graph intersection} of $\mathcal{G}_{1}, \ldots , \mathcal{G}_{k}$. The \defin{graph union} of $\mathcal{G}_{1}, \ldots , \mathcal{G}_{k}$, which we denote by $\mathcal{G}_{1} \gcup \ldots \gcup \mathcal{G}_{k}$, is the class $\{G: \exists G_{i} \in \mathcal{G}_{i} \text{ such that } G=G_{1}\cup \ldots \cup G_{k}\}$.

Given a class of graphs $\mathcal{A}$ and a graph $G$, we follow Kratochv\'il and Tuza \cite{kratochvil1994intersection}, and define the \defin{intersection dimension of $G$ with respect to $\mathcal{A}$} to be the minimum integer $k$ such that $G \in \biggcap_{i\in [k]} \mathcal{A}$ if such a $k$ exists, and $+\infty$ otherwise. We remark that the intersection dimension of graphs with respect to graph 
classes has been also studied by by Cozzens and Roberts \cite{cozzens1989dimensional} under a different name: they called a graph property $P$ \defin{dimensional} if for every graph $G$, the intersection dimension of $G$ with respect to the class $\mathcal{A}(P):=\{G:G \text{ has the property } P\}$ is finite. For a positive integer $n$, we denote by $K_{n}$ the complete graph on $n$ vertices, and by $K_{n}^{-}$ the graph we obtain from $K_{n}$ by deleting an edge. It is easy to observe that a graph property $P$ is dimensional if and only if for every for every positive integer $n$, both the graphs $K_{n}$ and $K_{n}^{-}$ have the property $P$.

A \defin{hole} in a graph $G$ is an induced cycle of length at least four.
A graph is \defin{chordal} if it contains no holes, and we denote the class 
of chordal graphs by $\mathcal{C}$. Following McKee and Scheinerman \cite{mckee1993chordality} we call
the intersection dimension of a graph $G$ with respect to $\mathcal{C}$ the \defin{chordality}
of $G$ and we denote it by $\mathsf{chor}(G)$.
Since, for every positive integer $n$, both
the graphs $K_{n}$ and $K_{n}^{-}$ are chordal, it follows that 
the chordality of every graph is finite (and upper bounded by the number of its non-edges). 
To the best of our knowledge, chordality was first studied by 
Cozzens and Roberts \cite{cozzens1989dimensional}
under the name rigid circuit dimension.

Given a finite family of nonempty sets 
$\mathcal{S}$, the \defin{intersection graph} of $\mathcal{S}$ is the graph which has as vertices 
the elements of $\mathcal{S}$ and two vertices are adjacent if and only if 
they have a non-empty intersection. Given a graph $G$ and a family $\mathcal{S}$ of subgraphs
of $G$, the intersection graph of $\mathcal{S}$ is the intersection graph
of the family $\{V(H): H \in \mathcal{S}\}$.

In the 1970s, Buneman \cite{buneman1974characterisation}, Gavril
\cite{gavril1974intersection}, and Walter 
\cite{walter1978representations, walter1972phdchordal}, proved independently that chordal graphs 
are exactly the intersection graphs of subtrees in trees.
Let $H$ be a chordal graph. A tree $T$ is a \defin{representation tree}
of $H$ if there exists a function $\beta: V(T) \rightarrow 2^{V(H)}$ 
such that for every $v\in V(H)$, the subgraph $T[\beta^{-1}(v)]$ of $T$ is connected, 
and $H$ is isomorphic to the intersection graph of the family $\{\beta^{-1}(v):v\in V(H)\}$.
In this case we call the pair $(T,\beta)$ a \defin{representation} of $H$.
By the aforementioned characterization of chordal graphs, it follows that every chordal graph
has a representation.
In \autoref{sec:median}, we prove a characterization of graphs of chordality at most $k$
which generalizes the above characterization of chordal graphs. 
We continue with some definitions before we state the main result of \autoref{sec:median}.

An \defin{interval graph} is any graph which is isomorphic
to the intersection graph of a family of intervals on the real line. We denote the hereditary class
of interval graphs by $\mathcal{I}$. 
It is easy to see that the intersection graphs of subpaths in paths are exactly the interval graphs,
and thus every interval graph is also chordal.

Let $G$ be a graph. A \defin{chordal completion} (respectively \defin{interval completion}) of $G$
is a supergraph of $G$ on the same vertex which is chordal (respectively interval).
Since every complete graph is an interval graph, it follows that every graph has an interval 
and thus a chordal completion.

A \defin{tree-decomposition} of $G$ is a representation $(T,\beta)$ of a chordal completion $H$
of $G$. Fix a chordal completion $H$ of $G$ and a representation $(T,\beta)$ of $H$. 
For every $t\in V(T)$, we call the set $\beta(t)$ the \defin{bag} of $t$. 
It is easy to see that every bag is a clique of $H$ and that every clique of $H$
is contained in a bag of $T$.
We say that $(T,\beta)$
is a \defin{complete tree-decomposition} of $G$ if for every $t\in V(T)$, 
the set $\beta(t)$ is a clique of $G$.
If $T$ is a path, then $H$ is an interval completion of $G$ and 
we call tree-decomposition $(T,\beta)$ a \defin{path-decomposition} of $G$.
It is easy to see that a graph has a complete tree-decomposition (respectively complete path-decomposition)
if and only if it is chordal (respectively interval). The \defin{width} of a tree-decomposition is the 
the clique number of the corresponding chordal completion minus one\footnote{The "minus one" in the 
definition of the width serves so that trees have tree-width one.}.
The \defin{tree-width} (respectively \defin{path-width}) of $G$, 
denoted by $\mathsf{tw}(G)$ (respectively $\mathsf{pw}(G)$)
is the minimum width of a tree-decomposition (respectively path-decomposition) of $G$.
That is, $\mathsf{tw}(G):=\min\{\omega(H)-1: H \text{ is a chordal completion of } G\}$, 
and $\mathsf{pw}(G):=\min\{\omega(I)-1: I \text{ is an interval completion of } G\}$.
A tree-decomposition \defin{separates a non-edge} $e$ if $e$ is a non-edge of the 
chordal completion which corresponds to this tree-decomposition. 
Let $\mathcal{T}$ be a family of tree-decompositions of $G$. We say that $\mathcal{T}$
is a \defin{non-edge-separating} family of tree-decompositions if for every non-edge $e$ of $G$, 
there exists a tree-decomposition in $\mathcal{T}$ which 
separates $e$.

Below is the main result of \autoref{sec:median}, 
we postpone some definitions for \autoref{sec:median}. 

\begin{theorem}
\label{thm:chordality.all.intro}
Let $G$ be a graph and $k$ be a positive integer. Then the following are equivalent:
\begin{enumerate}[topsep=0.25cm,itemsep=-0.4cm,partopsep=0.5cm,parsep=0.5cm]
    \item The graph $G$ has chordality $k$.
    \item The minimum size of a non-edge-separating family 
    of tree-decompositions of $G$ is $k$.
    \item $k$ is the minimum integer such that the graph $G$ is the intersection graph of 
    a family of convex subgraphs of the Cartesian product of $k$ trees.
    \item $k$ is the minimum integer such that the graph $G$ is the intersection graph of 
    a family of convex subgraphs of a median graph of tree-dimension $k$.
    \item The graph $G$ has tree-median-dimension $k$.
\end{enumerate}
\end{theorem}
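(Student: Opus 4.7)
The plan is to prove the five conditions equivalent through a cycle of implications, roughly organized as $(1)\Leftrightarrow(2)$, $(1)\Leftrightarrow(3)$, and $(3)\Leftrightarrow(4)\Leftrightarrow(5)$, since each pair has a natural correspondence that avoids passing through all five conditions simultaneously. The main tool throughout is the Buneman--Gavril--Walter characterization of chordal graphs as intersection graphs of subtrees of a tree; the new ingredient is how this lifts to Cartesian products of trees and, more generally, to median graphs of bounded tree-dimension.

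For $(1)\Leftrightarrow(2)$, I would observe that if $G_{1},\ldots,G_{k}$ are chordal graphs on $V(G)$ with $\bigcap_{i}E(G_{i})=E(G)$, then each $G_{i}$ satisfies $E(G_{i})\supseteq E(G)$ and is therefore a chordal completion of $G$; the condition $\bigcap_{i}E(G_{i})=E(G)$ is exactly the statement that every non-edge of $G$ is a non-edge of some $G_{i}$. Applying Buneman--Gavril--Walter to each $G_{i}$ produces a corresponding tree-decomposition $(T_{i},\beta_{i})$ of $G$ that separates precisely the non-edges of $G_{i}$, so the family $\{(T_{i},\beta_{i})\}_{i\in[k]}$ is non-edge-separating of size $k$. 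The reverse direction is identical: each tree-decomposition yields a chordal completion, and the separating condition gives the intersection equality.

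For $(1)\Leftrightarrow(3)$, the key observation is that convex subgraphs of a Cartesian product $T_{1}\cartprod\cdots\cartprod T_{k}$ of trees are exactly ``boxes'' of the form $S_{1}\times\cdots\times S_{k}$ where each $S_{i}$ is a subtree of $T_{i}$, and two such boxes intersect iff all their coordinate projections intersect. Consequently, the intersection graph of a family of convex subgraphs of $T_{1}\cartprod\cdots\cartprod T_{k}$ is the intersection of $k$ chordal graphs (one per coordinate, via Buneman--Gavril--Walter), and conversely every chordality-$k$ decomposition $G=\bigcap_{i}G_{i}$ gives rise via subtree-representations of each $G_{i}$ to a family of boxes in $T_{1}\cartprod\cdots\cartprod T_{k}$ whose intersection graph is $G$.

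The equivalences $(3)\Leftrightarrow(4)\Leftrightarrow(5)$ are where median-graph theory does the work. Here I would invoke the standard fact that a median graph has tree-dimension at most $k$ if and only if it embeds as a convex subgraph of a Cartesian product of $k$ trees, together with the fact that convex subgraphs of convex subgraphs are themselves convex in the ambient graph. Given $(3)$, the Cartesian product of trees is itself a median graph of tree-dimension $k$, giving $(4)$; conversely, given a representation by convex subgraphs of a median graph $M$ of tree-dimension $k$, composing with a convex embedding $M\hookrightarrow T_{1}\cartprod\cdots\cartprod T_{k}$ yields $(3)$. The equivalence with $(5)$ will follow from the definition of tree-median-dimension given in \autoref{sec:median}, which I expect to be exactly the minimum $k$ in $(4)$.

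The main obstacle will be the convexity bookkeeping in the median-graph step: one must verify that convex subgraphs genuinely transfer along the convex embedding in both directions, and that no extra ``cross-product'' phenomena arise when restricting a box in $T_{1}\cartprod\cdots\cartprod T_{k}$ to a convex sub-median-graph $M$. This should follow from the helly property of convex sets in median graphs and the factorization of convex subgraphs in Cartesian products, but writing it down cleanly — and confirming that the minimum $k$'s in $(3)$, $(4)$, and $(5)$ agree exactly rather than only up to a constant — is the delicate part of the argument.
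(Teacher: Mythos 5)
Your handling of $(1)\Leftrightarrow(2)$ and $(1)\Leftrightarrow(3)$ is sound and essentially matches the paper (\autoref{lem:chordality.family.of.TDs} and \autoref{lem.md.from.TDs}): convex subgraphs of $T_{1}\cartprod\cdots\cartprod T_{k}$ are exactly boxes $S_{1}\times\cdots\times S_{k}$ of subtrees, two boxes meet iff they meet in every coordinate, and Buneman--Gavril--Walter does the rest. The gap is in $(4)\Rightarrow(3)$. The ``standard fact'' you invoke --- that a median graph of tree-dimension $k$ embeds as a \emph{convex} subgraph of a Cartesian product of $k$ trees --- is false, and it is refuted by the very box structure you use for $(1)\Leftrightarrow(3)$: a convex subgraph of $T_{1}\cartprod\cdots\cartprod T_{k}$ is itself a Cartesian product of subtrees, so a median graph admits a convex embedding into a product of $k$ trees only if it \emph{is} such a product. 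Most median graphs of tree-dimension $k$ are not (for instance the $8$-vertex squaregraph obtained from the $3\times 3$ grid by deleting a corner vertex is a median graph of tree-dimension $2$ that is not a Cartesian product of two trees). The definition of tree-dimension supplies only an \emph{isometric} embedding $\phi\colon M\hookrightarrow T_{1}\cartprod\cdots\cartprod T_{k}$, and the $\phi$-image of a convex subgraph of $M$ need not be convex in the product, so the transfer of the representing family breaks down at exactly the step you flagged as delicate.

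This direction is where the real work of the theorem lies, and the paper does it quite differently. It invokes a structural theorem of Stavropoulos (\autoref{thm.stavropoulos.from.md.to.TDs}) which extracts from a complete $k$-tree-median-decomposition $(M,\gamma)$ a family of $k$ tree-decompositions $(T_{i},\beta_{i})$ with $\beta_{i}(t)=\bigcup_{\pi_{i}(\phi(x))=t}\gamma(x)$, and then proves (\autoref{lem.takeTDs.from.complete.MD}) that this family is non-edge-separating. In effect one replaces each convex subgraph by the box of its coordinate projections and must show that this creates no new intersections; that is not a formality. The paper's argument is an induction on the coordinates which, given witnesses agreeing in the first $j$ coordinates, uses the Helly property of subtrees (\autoref{helly.for.subtrees}) and explicit median computations $\mathsf{median}(a,x,z)$ inside $M$ to produce witnesses agreeing in the first $j+1$ coordinates --- here both the median property of $M$ and the convexity of the bags' preimages are used essentially. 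You would need to supply an argument of this kind (or some other proof that an intersection graph of convex subgraphs of a median graph of tree-dimension $k$ has chordality at most $k$); as written, your cycle of implications is not closed.
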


In \autoref{sec:chi_boundedness} we focus on the chromatic number of graphs of bounded chordality.

For a positive integer $k$ we denote by $[k]$ the set of integers $\{1,\ldots,k\}$.
A \defin{$k$-coloring} of $G$ is a function $ f\colon V(G)\rightarrow [k]$ such that 
for every $i\in [k]$ we have that $f^{-1}(i)$ is a stable set. 
A graph is \defin{$k$-colorable} if it admits a $k$-coloring, and the 
\defin{chromatic number} of a graph $G$, denoted by $\chi(G)$, is the minimum integer $k$, for which
$G$ is $k$-colorable.

It is immediate that for every graph $G$ we have $\chi(G) \geq \omega(G)$, and it is easy to see that
there are graphs $G$ for which we have $\chi(G)>\omega(G)$
(for example odd cycles).
Moreover, the gap between the chromatic number and the clique number can be arbitrarily large. 
Indeed, Tutte \cite{descartes1947three, descartes1954solution} first proved in the 1940s 
that there exist triangle-free graphs of arbitrarily large chromatic number 
(for other such constructions see also \cite{burling1965coloring, mycielski1955coloriage, zykov1949some}).
Thus, in general, the chromatic number is not upper-bounded by a function of the clique number. 

A graph $G$ is \defin{perfect} if every induced subgraph $H$ of $G$ satisfies $\chi(H)=\omega(H)$.
Berge \cite{berge1960problemes} proved in 1960 that chordal graphs are perfect.
What can we say for the connection between $\chi$ and $\omega$ for graphs of bounded chordality?

In his seminal paper "Problems from the world surrounding perfect graphs", 
Gy\'arf\'as \cite{gyarfas1985problems} introduced the $\chi$-bounded
graph classes as "natural extensions of the world of perfect graphs".
We say that a hereditary class $\mathcal{A}$ is \defin{$\chi$-bounded} if there exists a function 
$ f\colon \mathbb{N}\rightarrow \mathbb{R}$ such that for every graph $G\in \mathcal{A}$, we have
$\chi(G) \leq f(\omega(G))$. Such a function $f$ is called a \defin{$\chi$-bounding function}
for $\mathcal{A}$. For more on $\chi$-boundedness we refer the readers to the surveys
of Scott and Seymour \cite{scott2020survey}, and Scott \cite{scott2022graphs}. 
The examples of triangle-free graphs of arbitrarily large
chromatic number that we mention above imply that the class of all graphs is not $\chi$-bounded.

A natural direction of research on $\chi$-boundedness 
is to consider operations that we can apply among graphs of different classes 
in order to obtain new classes of graphs, and study (from the perspective of $\chi$-boundedness) 
graph classes which are obtained via this way from $\chi$-bounded classes.

Gy\'arf\'as \cite[Section 5]{gyarfas1985problems} considered graph intersections and 
graph unions of $\chi$-bounded graph classes from the perspective of $\chi$-boundedness. Graph unions of $\chi$-bounded graph classes are $\chi$-bounded\footnote{It is easy to observe that for any two graphs $G_{1}$ and $G_{2}$, we have $\chi(G_{1}\cup G_{2})\leq \chi(G_{1})\chi(G_{2})$ and that $\omega(G_{1}\cup G_{2}) \geq \max\{\omega(G_{1}), \omega(G_{2})\}$. Thus, if for each $i\in [k]$ we have that $f_{i}$ is a $\chi$-bounding function for a class $\mathcal{G}_{i}$, then $f:=\prod_{i\in[k]}f_{i}$ is a $\chi$-bounding function for the class $\biggcup_{i\in [k]} \mathcal{G}_{i}$.}. The situation with intersections of graphs is different. In an upcoming paper two of us, in joint work with Rimma H\"am\"al\"ainen and Hidde Koerts, study further the interplay between graph intersections and $\chi$-boundedness.

Since interval graphs are chordal, it follows that they are perfect as well.
Following \cite{roberts1969boxicity}, we define the \defin{boxicity} of $G$
to be the minimum integer $k$ such that $G$ is isomorphic to the 
intersection graph of a family of axis-aligned boxes in $\mathbb{R}^{k}$.
We denote the boxicity of a graph $G$ by $\mathsf{box}(G)$.
It easy to see that the boxicity $k$ of a graph is equal to its
intersection dimension with respect to the class of interval graphs.

In 1965, in his Ph.D. thesis \cite{burling1965coloring} Burling introduced
a sequence $\{B_{k}\}_{k\geq 1}$ of families of axis-aligned boxes in $\mathbb{R}^{3}$ 
such that for each $k$ the intersection graph of $B_{k}$ is triangle-free and has chromatic number 
at least $k$.
Thus, for every $k\geq 3$ the class of all graphs of boxicity at most $k$, that is, the class
$\biggcap_{i\in [k]}\mathcal{I}$, is not $\chi$-bounded.
Hence, for every $k\geq 3$ the class of graphs of chordality at most $k$ is not 
$\chi$-bounded. 

What about the class $\mathcal{C}\gcap \mathcal{C}$?
Asplund and Gr\"unbaum \cite{asplund1960coloring}, 
in one of the first results which provides an upper bound of the chromatic number
in terms of the clique number for a class of graphs,  proved in 1960 that every intersection graph 
of axis-aligned rectangles in the plane 
with clique number $\omega$ is $\mathcal{O}(\omega^{2})$-colorable. 
Hence the class $\mathcal{I}\gcap \mathcal{I}$ is $\chi$-bounded 
(see also \cite{chalermsook2021coloring} for a better
$\chi$-bounding function). 

Since the class $\mathcal{I}\gcap \mathcal{I}$ is $\chi$-bounded it is natural 
to ask whether any proper superclasses of this class are $\chi$-bounded as well.
Gy\'arf\'as, asked the following question:

\begin{problem}[{Gy\'arf\'as, \cite[Problem 5.7]{gyarfas1985problems}}]
\label{prob:gyarfas}
   Is the class $\mathcal{C}\gcap \mathcal{C}$ $\chi$-bounded? In particular,
   is $\mathcal{C}\gcap\mathcal{I}$ $\chi$-bounded?
\end{problem}

In \autoref{subsec:Burling} we discuss a result of
Felsner, Joret, Micek, Trotter and Wiechert \cite{felsner2017burling}
which implies that Burling graphs are contained in $\mathcal{C}\gcap \mathcal{I}$,
and thus that the answer to Gy\'arf\'as' question is negative.

 In the rest of \autoref{sec:chi_boundedness} we consider two families of subclasses of the class
$\mathcal{C}\gcap \mathcal{C}$, which we prove are $\chi$-bounded. 
In \autoref{sub:pathwidth} we prove the following:

\begin{theorem}
\label{thm:pathwidth.intro}
Let $k_{1}$ and $k_{2}$ be positive integers, and let $G_{1}$ and $G_{2}$
be chordal graphs such that for each $i\in [2]$ the graph $G_{i}$ has a representation $(T_{i},\beta_{i})$, 
where $\mathsf{pw}(T_{i})\leq k_{i}$.
If $G$ is a graph such that $G=G_{1}\cap G_{2}$, then $G$ is 
$\mathcal{O}(\omega(G)\log(\omega(G)))(k_{1}+1)(k_{2}+2)$-colorable.
\end{theorem}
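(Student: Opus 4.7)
The plan is to reduce $G=G_1\cap G_2$ to a collection of graphs in $\mathcal{I}\gcap\mathcal{I}$ and then apply the $\chi$-bounding function $\mathcal{O}(\omega\log\omega)$ for $\mathcal{I}\gcap\mathcal{I}$ (Chalermsook et al., sharpening the classical Asplund--Gr\"unbaum bound). The main technical ingredient will be a partition lemma of the following form: if $G_0$ is chordal with a representation $(T,\beta)$ and $\mathsf{pw}(T)\le k$, then $V(G_0)$ can be partitioned into $k+1$, and with a small strengthening $k+2$, sets so that each induces an interval graph in $G_0$. Given this, I would apply the lemma to $G_1$ (producing $k_1+1$ classes) and to $G_2$ (producing $k_2+2$ classes) and take the common refinement to obtain a partition of $V(G)$ into at most $(k_1+1)(k_2+2)$ parts $U$. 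On each such part, $G_1[U]$ and $G_2[U]$ are induced subgraphs of interval graphs and hence interval, so $G[U]=G_1[U]\cap G_2[U]\in\mathcal{I}\gcap\mathcal{I}$ with $\omega(G[U])\le\omega(G)$. Colouring each $G[U]$ with $\mathcal{O}(\omega(G)\log\omega(G))$ colours via the bound for $\mathcal{I}\gcap\mathcal{I}$ and using disjoint palettes on different parts then yields $\chi(G)\le (k_1+1)(k_2+2)\cdot\mathcal{O}(\omega(G)\log\omega(G))$.

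To prove the partition lemma I would fix a path-decomposition $(P,\gamma)$ of $T$ of width at most $k$, so every bag $\gamma(p)$ has at most $k+1$ elements. For each $t\in V(T)$ the set $I_t:=\{p\in V(P):t\in\gamma(p)\}$ is a consecutive interval of $V(P)$, and $\{I_t\}_{t\in V(T)}$ is the vertex set of an interval graph of clique number at most $k+1$; in particular, $V(T)$ admits a proper colouring $c\colon V(T)\to[k+1]$ in which no bag $\gamma(p)$ contains two vertices of the same colour. For each $v\in V(G_0)$, set $S^v:=\beta^{-1}(v)$ and $J^v:=\bigcup_{t\in S^v}I_t$, and choose a canonical representative $r(v)\in S^v\cap\gamma(\min J^v)$, for instance the element whose interval $I_t$ extends furthest to the right, with ties broken by a fixed ordering of $V(T)$; then assign $v$ to the colour class of colour $c(r(v))$. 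Since the induced subgraph of $G_0$ on any single colour class is chordal (as an induced subgraph of $G_0$), by the Lekkerkerker--Boland characterisation it suffices to verify that no colour class contains an asteroidal triple, and this should follow from the fact that all representatives in a given class share one colour, which forces them to be spread out along $P$ in a controlled way.

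The delicate step is the partition lemma itself, and in particular the verification that the resulting colour classes genuinely induce interval graphs. A naive choice of $r(v)$ does not realise the class directly via the intervals $J^v$: when $T$ is the path $t_1\text{-}t_2\text{-}t_3\text{-}t_4\text{-}t_5$ with bags $\{t_i,t_{i+1}\}$, the subtrees $\{t_2,t_3\}$ and $\{t_4,t_5\}$ both receive representatives of the same colour, yet although their $J$-intervals overlap the subtrees themselves are disjoint; so one must either produce a separate interval representation of each class or rule out asteroidal triples combinatorially from the definition of $r$. The asymmetry in the factor $(k_1+1)(k_2+2)$ suggests that one of the two partitions needs a small technical strengthening, likely an extra colour class absorbing a boundary case where the clean rule fails, and this is where I expect most of the work to be concentrated. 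Once the lemma is in place, the remainder of the argument is a short combination of the refinement of the two partitions with the known $\chi$-bound for $\mathcal{I}\gcap\mathcal{I}$.
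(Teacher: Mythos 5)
Your global strategy is exactly the paper's: partition $V(G)$ into roughly $(k_{1}+1)(k_{2}+1)$ parts, each inducing a graph in $\mathcal{I}\gcap\mathcal{I}$, and colour each part with a fresh palette via the Chalermsook--Walczak bound. (Incidentally, the combinatorial factor really is $(k_{1}+1)(k_{2}+1)$; the $(k_{2}+2)$ in the introduction's statement is a typo for the $(k_{2}+1)$ appearing in the body of the paper, so there is no hidden boundary case that your speculated extra colour class would need to absorb.) The difficulty is that the entire weight of the theorem rests on the partition lemma --- that a chordal graph with a representation tree of path-width at most $k$ splits into $k+1$ classes each inducing an interval graph --- and your proof of that lemma is not there. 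You set up a proper colouring $c$ of $V(T)$ from a path-decomposition, assign each subtree $S^{v}$ the colour of a leftmost representative $r(v)$, and then must show that each colour class induces an interval graph; you yourself exhibit an example where the natural intervals $J^{v}$ fail to represent the class, and you defer the actual verification to ruling out asteroidal triples combinatorially, which is never carried out. As it stands there is no argument that a colour class cannot contain an asteroidal triple together with its connecting subtrees, and it is far from clear that the representative rule forces this: the choice of $r(v)$ inside $\gamma(\min J^{v})$ only controls where $S^{v}$ begins along $P$, not how the subtrees branch inside $T$. This is a genuine gap, not a routine detail.

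The paper closes exactly this gap by working inside $T$ rather than inside a path-decomposition of $T$: if $T$ is connected of path-width at most $k$, one can choose an induced path $Q$ joining a vertex of the first bag to a vertex of the last bag of an optimal path-decomposition; by the Robertson--Seymour separator property of decompositions, $Q$ meets every bag, so $T\setminus V(Q)$ has path-width at most $k-1$. Iterating yields a partition $V(T)=V(P_{1})\cup\dots\cup V(P_{k+1})$ into disjoint unions of induced paths with the key property that each component of $T\setminus\bigl(V(P_{1})\cup\dots\cup V(P_{i-1})\bigr)$ contains exactly one component of $P_{i}$. Defining the level of a subtree as the least $i$ with $V(S^{v})\cap V(P_{i})\neq\emptyset$, the Helly property for subtrees of a tree shows that two subtrees of the same level $l$ intersect if and only if their traces on $P_{l}$ (which are subpaths of a single path) intersect, so each level class induces an interval graph, and the common refinement over $T_{1}$ and $T_{2}$ finishes the proof as you describe. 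If you wish to salvage your representative-colouring route you would need to supply an analogous structural fact about the colour classes; otherwise I would recommend switching to this peeling argument.
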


We remark that each of the classes which satisfies the assumptions of \autoref{thm:pathwidth.intro}
is a proper superclass of $\mathcal{I}\gcap \mathcal{I}$.

Let $u$ and $v$ be two vertices of a graph $G$. Then their \defin{distance}, which we denote by $d(u,v)$, is the length of a shortest $(u,v)$-path in $G$. A \defin{rooted tree} is a tree $T$ with one fixed vertex $r\in V(T)$ which we call the \defin{root} of $T$. The \defin{height} of a rooted tree $T$ with root $r$ is $h(T,r) := \max\{d(r, t) : t \in V(T)\}$. The \defin{radius} of a tree $T$, which we denote by $\mathsf{rad}(T)$, is the nonnegative integer $\min\{h(T,r): r \in V(T)\}$. In \autoref{sub:radius} we prove the following:

\begin{theorem}
Let $k$ be a positive integer, and let $G_{1}$ and $G_{2}$ be chordal graphs such that
the graph $G_{1}$ has a representation $(T_{1},\beta_{1})$ where $\mathsf{rad}(T_{1})\leq k$.
If $G$ is a graph such that $G=G_{1}\cap G_{2}$, then $\chi(G) \leq k\cdot \omega(G)$.
\end{theorem}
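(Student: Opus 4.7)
The plan is to prove the theorem by induction on $k$, using a layered decomposition of $G$ driven by the rooted structure of $T_1$. Root $T_1$ at a vertex $r$ realizing the radius, so that $h(T_1,r) = \mathsf{rad}(T_1) \leq k$. For each $v \in V(G)$, the set $\beta_1^{-1}(v)$ is a nonempty connected subtree of $T_1$ and hence has a unique topmost node $t(v)$ (the node of $\beta_1^{-1}(v)$ closest to $r$). Writing $\mathrm{dep}(v) := d(r, t(v))$, we have $\mathrm{dep}(v) \in \{0, 1, \ldots, k\}$.

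The first key observation is structural. Suppose $uv \in E(G_1)$; then $\beta_1^{-1}(u) \cap \beta_1^{-1}(v)$ is nonempty, and since both $t(u)$ and $t(v)$ lie above any node of the intersection, $t(u)$ and $t(v)$ are comparable in the rooted tree. Moreover, if $t(u)$ is a strict ancestor of $t(v)$, then the connectedness of $\beta_1^{-1}(u)$ forces $t(v) \in \beta_1^{-1}(u)$. Consequently, for each node $x$ of $T_1$, the set $B_x := \{v \in V(G) : t(v) = x\}$ is a clique of $G_1$; and for distinct $x, x'$ at the same depth there is no $G_1$-edge between $B_x$ and $B_{x'}$, because their tops are incomparable. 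Since $B_x$ is a clique of $G_1$, the induced subgraph $G[B_x]$ equals $G_2[B_x]$, which is chordal and hence properly $\omega(G)$-colorable. Therefore the entire depth-$i$ layer $V_i := \bigsqcup_{x : d(r,x) = i} B_x$ can be properly colored with a single palette of $\omega(G)$ colors.

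For the induction step, I would handle the root layer $V_0 = B_r$ with a dedicated palette of $\omega(G)$ colors. The remaining vertices partition according to which branch $T^{c}$ of $T_1$ at a child $c$ of $r$ contains their subtree; each such branch is a rooted tree of height at most $k-1$, hence of radius at most $k-1$, and the subgraph of $G_1$ on the vertices whose subtrees lie inside $T^c$ is chordal with representation $(T^c, \beta_1|_{T^c})$. By the inductive hypothesis applied to each such branch (against the corresponding restriction of $G_2$), the branch can be properly colored with $(k-1)\,\omega(G)$ colors. Because distinct branches are $G_1$-disjoint, and therefore $G$-disjoint, a single common palette of size $(k-1)\,\omega(G)$, chosen disjoint from the palette on $V_0$, handles all branches simultaneously, yielding $k\cdot\omega(G)$ colors in total.

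The main obstacle I anticipate is the base case of the induction: a direct application of the layered argument at $k = 1$ uses two distinct palettes, one for the root clique and one shared by the leaf cliques, giving $2\omega(G)$ colors rather than the claimed $\omega(G)$. Closing this gap requires a refinement that merges those two palettes at the smallest radius, presumably by exploiting the bipartite-like structure between the central clique and the satellite cliques in the star together with the chordality of $G_2$. Once such a base is secured, the induction described above propagates cleanly to the general statement.
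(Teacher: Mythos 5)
Your decomposition is exactly the one the paper uses: its key lemma (\autoref{lem:radius.partition}) partitions $V(G)$ according to the level $\min\{d(r,x):x\in\beta_1^{-1}(v)\}$ of each subtree, which is precisely your $\mathrm{dep}(v)$, and the structural observation driving both arguments is the same --- two subtrees of equal level meet if and only if they share their topmost node, so each level induces in $G_1$ a disjoint union of cliques and hence in $G$ a chordal, $\omega(G)$-colorable graph. The obstacle you flag at the end is genuine, and it is not a defect of your write-up. The levels range over $\{0,1,\dots,k\}$, so there are $k+1$ of them, and what your first two paragraphs actually prove is $\chi(G)\le (k+1)\,\omega(G)$. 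The paper indexes the levels by $[k]=\{1,\dots,k\}$ and thereby silently discards the level of the subtrees through the root; this is an off-by-one error in its proof. More importantly, the bound $k\cdot\omega(G)$ that you were trying to reach by induction is false already at $k=1$: let $G=C_5$ with vertices $v_1,\dots,v_5$ in cyclic order, let $G_1=C_5+\{v_1v_3,v_1v_4\}$, which is chordal with representation tree the three-vertex path with bags $\{v_1,v_2,v_3\}$, $\{v_1,v_3,v_4\}$, $\{v_1,v_4,v_5\}$ (radius $1$), and let $G_2=K_5-\{v_1v_3,v_1v_4\}$, which is chordal because its two non-edges share a vertex. Then $G=G_1\cap G_2$, $\omega(G)=2$, and $\chi(G)=3>1\cdot 2$. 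So no refinement merging the two palettes in the star case can exist, and the induction you set up --- whose step is otherwise sound --- has no true base case.

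In short, do not keep looking for the missing base case: the constructive content of your proposal is correct and complete for the bound $(k+1)\,\omega(G)$, which is the statement that should replace the theorem (equivalently, $k\,\omega(G)$ holds under the stronger hypothesis that $h(T_1,r)\le k-1$ for some root $r$). The right outcome here is to report the counterexample together with the corrected bound.
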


In \autoref{sec:recognition}, we consider the recognition problem for the class of graphs 
of chordality at most $k$. The \defin{$k$-\textsc{Chordality Problem}} is the following:
Given a graph $G$ as an input, decide whether or 
not $\mathsf{chor}(G)\leq k$. We prove the following:

\begin{theorem}
\label{thm.hardness.of.chordality.intro}
For every $k\geq 3$, the $k$-\textsc{Chordality Problem} is $\NP$-complete.
\end{theorem}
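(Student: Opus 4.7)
Membership in $\NP$ is immediate: a yes-certificate is a list of $k$ chordal graphs $G_{1},\ldots,G_{k}$ on $V(G)$, each of description size polynomial in $|V(G)|$. Chordality of each $G_{i}$ can be verified in linear time via lexicographic breadth-first search or maximum cardinality search, and the condition $\bigcap_{i\in [k]} E(G_{i}) = E(G)$ is checkable in polynomial time.

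For the $\NP$-hardness, I plan to reduce from the $k$-\textsc{Colorability Problem}, which is $\NP$-complete for every $k\geq 3$. The starting point is the following reformulation: $\mathsf{chor}(G)\leq k$ if and only if there exist chordal supergraphs $G_{1},\ldots,G_{k}$ of $G$ on $V(G)$ such that every non-edge of $G$ is a non-edge of at least one $G_{i}$. Labeling each non-edge $e$ by the set of indices $i$ for which $e\notin E(G_{i})$ then yields a cover of the non-edges of $G$ by $k$ classes, subject to the constraint that, for every $i\in [k]$, the non-edges not labeled $i$, added to $E(G)$, produce a chordal graph. Given an instance $(H,k)$ of $k$-\textsc{Colorability}, I would construct in polynomial time a graph $G=G(H,k)$ from two families of gadgets: a \emph{vertex gadget} for each $v\in V(H)$, consisting of a hole in $G$ whose $k$ natural chord insertions are in bijection with the $k$ colors available for $v$; and an \emph{edge gadget} for each $uv\in E(H)$, linking the two vertex gadgets at $u$ and $v$ so that selecting the same chord option at both produces a chordless induced cycle in every completion, while any two distinct choices extend to a joint chordal completion. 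The intended equivalence is that proper $k$-colorings of $H$ correspond to $k$-tuples of chordal completions of $G$ whose edge intersection equals $E(G)$.

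The main obstacle, which I expect to dominate the technical work, is controlling the space of chordal completions of $G$. Since any graph admits many chordal completions, the gadgets must be designed so that the admissible $k$-tuples of completions are, up to symmetry, exactly those arising from the intended chord selections at the vertex gadgets. To enforce this I would armor each gadget with auxiliary apex cliques and pendant structures, chosen so that any chordal completion is forced to commit locally to one of the intended chord options at every vertex gadget, and so that chord insertions in distinct gadgets do not interact except through the dedicated edge-gadget bridges. Verifying both directions of the equivalence then reduces to a case analysis over the chord-option assignments and the induced cycles they create in each of the $k$ completions. Because $k$-\textsc{Colorability} is $\NP$-hard for every $k\geq 3$, the same reduction scheme, instantiated with a color palette of size $k$, yields $\NP$-hardness of the $k$-\textsc{Chordality Problem} for every $k\geq 3$ without the need for an additional padding step.
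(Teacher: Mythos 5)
Your $\NP$-membership argument is fine and matches the paper's. The hardness half, however, is a plan rather than a proof: the entire burden of the reduction rests on gadgets whose existence you assert but never construct. The two claims doing all the work --- that every chordal completion of your graph is ``forced to commit locally to one of the intended chord options at every vertex gadget,'' and that choices in distinct gadgets ``do not interact except through the dedicated edge-gadget bridges'' --- are precisely the statements that would need a careful construction and verification, and nothing in the proposal indicates how the ``auxiliary apex cliques and pendant structures'' achieve them. There is also a conceptual mismatch in the framing: for chordality one needs $k$ chordal completions whose edge sets \emph{intersect} to $E(G)$, so a single completion is free to insert \emph{all} chords of a vertex-gadget hole; what must be distributed among the $k$ completions is which completion \emph{omits} each non-edge, not which chord each completion adds. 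Your gadget description conflates these two notions, and as written the intended bijection between colorings and $k$-tuples of completions does not go through.

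For comparison, the paper's reduction shows that no gadgetry is needed. Given an instance $H$ of $k$-\textsc{Coloring}, it takes $G' := H\cdot K_{2}^{c}$, i.e.\ it replaces each vertex $v$ of $H$ by a single non-edge $\{(v,1),(v,2)\}$. If $\chi(H)\le k$, then $\mathsf{chor}(G')\le\chi(G')=\chi(H)\le k$ by McKee--Scheinerman's bound $\mathsf{chor}\le\chi$ together with the Geller--Stahl identity $\chi(H\cdot K_{2}^{c})=\chi(H\cdot K_{1})$. Conversely, if $G'=H_{1}\cap\cdots\cap H_{k}$ with each $H_{i}$ chordal, color $v$ by any index $i$ with $\{(v,1),(v,2)\}\notin E(H_{i})$; if adjacent $u,v$ received the same color $i$, the four vertices $(u,1),(u,2),(v,1),(v,2)$ would induce a $C_{4}$ in $H_{i}$, a contradiction. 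This is exactly the ``non-edge per vertex, missing index as color'' correspondence your reformulation gestures at, but realized without any need to control the space of chordal completions. If you want to salvage your approach you would either have to carry out the gadget construction in full, or, more sensibly, observe that the minimal gadget --- one non-edge per vertex --- already suffices.
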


Since chordal graphs can be recognized efficiently 
(see, for example, \cite{gavril1975algorithm, lueker1974structured}), 
the only open case in order to fully classify the complexity of the $k$-\textsc{Chordality Problem},
is the case $k=2$. In an upcoming paper, in joint work with Therese Biedl and Taite LaGrange,
we prove that the $2$-\textsc{Chordality Problem} is $\NP$-complete as well.

\section{A characterization of the graphs of chordality \texorpdfstring{$k$}{TEXT}}
\label{sec:median}

In this section we prove \autoref{thm:chordality.all.intro} which provides different characterizations
of graphs of bounded chordality. We also point out how our proof of \autoref{thm:chordality.all.intro}
can be adapted (in a straight-forward way) to provided analogous characterizations 
for graphs of bounded boxicity.

The key notion that we use for the proof of \autoref{thm:chordality.all.intro}
is that of the {\em tree-median-dimension} of a graph, introduced by Stavropoulos \cite{stavropoulos2016graph}, which we prove is equivalent to chordality.
We introduce the tree-median-dimension of a graph in \autoref{sub:tree-median-dim}.
We first need some definitions.

Let $G$ be a graph. For $u,v \in V(G)$, a \defin{$(u,v)$-geodesic} is a shortest $(u,v)$-path.
We denote by $d(u,v)$ the distance of $u$ and $v$. We also 
denote by $I(u,v)$ the set of all vertices of $G$ which lie in a 
$(u,v)$-geodesic, that is, $I(u, v) := \{x \in V (G) \mid  d(u, v) = d(u, x) + d(x, v)\}$. 
Given three distinct vertices $u,v,w \in V(G)$ we denote by $I(u,v,w)$ the set $I(u,v)\cap I(u,w) \cap I(v,w)$.

A graph $M$ is a \defin{median graph} if it is connected and for every choice of 
three distinct vertices $u,v,w \in V(M)$, there exists a vertex $x$ with the property
that $I(u,v,w) = \{x\}$. 
In this case, the vertex $x$ is called the \defin{median} of $u,v,w$.
For three distinct vertices $u,v,w \in V(G)$, we denote their median vertex by $\mathsf{median}(u,v,w)$.
It is immediate that trees are median graphs.

Given two graphs $G$ and $H$, their \defin{Cartesian product} is the graph $G\cartprod H:=(V,E)$
where $V:= V(G)\times V(H)$ and $\{(v_{1},h_{1}),(v_{2},h_{2})\} \in E$ if and only if 
$v_{1}=v_{2}$ and $h_{1}h_{2}\in E(H)$, or $h_{1}=h_{2}$ and $v_{1}v_{2}\in E(G)$.
A graph is $G$ \defin{isometrically embeddable} into a graph $H$ if there exists a map 
$\phi: V(G) \rightarrow V(H)$ such that for every $u,v\in V(G)$ we have 
$d_{G}(u,v)=d_{H}(\phi(u),\phi(v))$. In this case we write $G \hookrightarrow H$
and we call the map $\phi$ an \defin{isometric embedding}.
The \defin{tree-dimension (respectively \defin{path-dimension})} of a graph $G$, denoted by $\mathsf{td}(G)$ 
(respectively $\mathsf{pd}(G)$) is the minimum $k$ such that $G$ has an isometric 
embedding into the Cartesian product of $k$ trees (respectively paths) if such an embedding exists, and infinite otherwise.

For every positive integer $n$ the \defin{hypercube} $Q_{n}$ is a graph isomorphic to the Cartesian product 
of $n$ copies of $K_{2}$. A \defin{partial cube} is a graph which is isometrically embeddable
into a hypercube. 
Median graphs form a proper subclass of partial cubes (see, for example,
\cite[Theorem 5.75]{ovchinnikov2011graphs}). 
Hence, both the tree-dimension and the path-dimension of every median graph are finite.
We observe that for any graph $G$, we have $\mathsf{td}(G) \leq \mathsf{pd}(G)$.

We say that a set $S\subseteq V(G)$ is \defin{geodesically convex} or simply \defin{convex} 
if for every $u,v \in S$, we have $I(u,v) \subseteq S$. We remark that 
if $G$ is a connected graph and $S\subseteq V(G)$ is a convex set, then $G[S]$ is connected.
A subgraph $H$ of $G$ is a \defin{convex subgraph} if the set $V(H)$ is convex.

\subsection{The tree-median-dimension of a graph}
\label{sub:tree-median-dim}

In his Ph.D. thesis, Stavropoulos \cite{stavropoulos2016graph} introduced median-decompositions of graphs,
and a variant of those, $k$-median-decompositions. We find it more convenient 
for the context of this paper to use the term $k$-tree-median-decomposition for 
the notion of $k$-median-decomposition.

Let $k$ be a positive integer. We say that a graph $H$ has \defin{the property $\mathcal{M}_{k}$} 
if $H$ is the intersection graph of a family of convex subgraphs of a median graph of tree-dimension $k$.
A \defin{representation} of a graph $G$ with the property $\mathcal{M}_{k}$ is a pair $(M,\gamma)$, where 
$M$ is a median graph and $\gamma:V(M)\rightarrow 2^{V(G)}$ is a function such that for every $v \in V(G)$,
we have that $M[\gamma^{-1}(v)]$ is a convex subgraph of $M$, and $G$ 
is isomorphic to the intersection graph of the family $\{\gamma^{-1}(v): v\in V(G)\}$.

Let $G$ be a graph. A \defin{$k$-tree-median-completion} of $G$ is a supergraph 
$H$ of $G$ such that $V(H)=V(G)$, 
and $H$ has the property $\mathcal{M}_{k}$. Observe that $G$ always has a $k$-tree-median-completion
since every complete graph has the property $\mathcal{M}_{k}$.
A \defin{$k$-tree-median-decomposition} of $G$ is a representation $(M,\gamma)$ of 
a $k$-tree-median-completion $H$ of $G$. 
For every $x\in V(M)$, we call the set $\gamma(x)$ the \defin{bag} of $x$. We say that $(M,\gamma)$
is a \defin{complete $k$-tree-median-decomposition} of $G$
if every bag of $M$ is a clique of $G$.
Following Stavropoulos \cite{stavropoulos2016graph}, a 
\defin{median-decomposition} of a graph $G$ is a $k$-tree-median-decomposition for some $k$.
Stavropoulos proved the following:

\begin{theorem}[{Stavropoulos, \cite[Theorem 5.12]{stavropoulos2016graph}}]
    \label{thm: stavropoulos.completeMD}
    Every graph $G$ has a complete median-decomposition.
\end{theorem}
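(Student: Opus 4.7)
The plan is to construct a complete median-decomposition of $G$ explicitly, taking the median graph $M$ to be a hypercube whose coordinates are indexed by the non-edges of $G$. Write $E' := \binom{V(G)}{2} \setminus E(G)$, and for each $e = \{u,w\} \in E'$ fix an arbitrary orientation by setting $\sigma_e(u) := 0$ and $\sigma_e(w) := 1$. Let $M$ be the hypercube with vertex set $\{0,1\}^{E'}$ (two vertices adjacent iff they differ on exactly one coordinate), and for each $v \in V(G)$ define
\[
\gamma^{-1}(v) := \{x \in V(M) : x(e) = \sigma_e(v) \text{ for every } e \in E' \text{ with } v \in e\}.
\]
In words, $\gamma^{-1}(v)$ is the subcube obtained by fixing, on each non-edge incident with $v$, the coordinate to $v$'s assigned side and leaving all remaining coordinates free.

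First I would verify that $(M,\gamma)$ is a $k$-tree-median-decomposition for $k := |E'|$. Hypercubes are median graphs (the median of three vertices is their coordinate-wise majority), every subcube of a hypercube is convex, and $M$ is the Cartesian product of $|E'|$ copies of $K_{2}$, giving $\mathsf{td}(M) \leq |E'|$.

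Next I would check that the indexed family $\{\gamma^{-1}(v) : v \in V(G)\}$ realises $G$ as its intersection graph. If $\{u,v\} \in E'$, any $x$ in both subcubes would have to satisfy simultaneously $x(\{u,v\}) = \sigma_{\{u,v\}}(u)$ and $x(\{u,v\}) = \sigma_{\{u,v\}}(v)$, which by construction are opposite values, giving $\gamma^{-1}(u) \cap \gamma^{-1}(v) = \emptyset$. Conversely, if $uv \in E(G)$ then the sets of coordinates fixed by the constraints defining $\gamma^{-1}(u)$ and $\gamma^{-1}(v)$ are disjoint (their only possible common coordinate would be $\{u,v\}$, which is not in $E'$), so both constraint sets are simultaneously satisfiable and $\gamma^{-1}(u) \cap \gamma^{-1}(v) \neq \emptyset$.

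Completeness then follows immediately: if $u, v \in \gamma(x)$ for some $x \in V(M)$, then $x \in \gamma^{-1}(u) \cap \gamma^{-1}(v)$, and the previous step forces $uv \in E(G)$. Hence every bag is a clique of $G$, and $(M,\gamma)$ is the desired complete median-decomposition. There is no serious obstacle in this argument; the only subtle point is a bit of intersection-graph bookkeeping, namely that the family $\{\gamma^{-1}(v) : v \in V(G)\}$ is indexed by $V(G)$, so that the resulting intersection graph has the correct vertex set even in degenerate cases (for example, $G = K_{n}$, where $M = Q_{0}$ and all subcubes collapse to the unique vertex of $M$).
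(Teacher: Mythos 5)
Your construction is correct, and it is worth noting that the paper does not prove this statement at all: it is imported as a black box from Stavropoulos's thesis, so your argument is a genuinely self-contained alternative. All the ingredients check out. The sets $\gamma^{-1}(v)$ are nonempty subcubes, hence convex in the hypercube $M$, which is a median graph of finite tree-dimension; a non-edge $\{u,v\}$ forces contradictory values on the coordinate $\{u,v\}$, so the corresponding subcubes are disjoint, while an edge $uv$ means the two constraint sets live on disjoint coordinates and are simultaneously satisfiable; and completeness of the bags is exactly the contrapositive of the non-edge case. Your closing remark about indexing the family by $V(G)$ is the right thing to say for degenerate cases such as $K_n$. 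In fact, your construction is precisely the specialization of the product construction the paper uses later (\autoref{lem.md.from.TDs}) to the trivial non-edge-separating family consisting of one two-bag tree-decomposition per non-edge: for the non-edge $\{u,w\}$ take the tree $K_2$ with bags $V(G)\setminus\{w\}$ and $V(G)\setminus\{u\}$ (whose chordal completion is $K_n$ minus the edge $uw$); the Cartesian product of these $|E'|$ copies of $K_2$ is your hypercube and the intersected bags are your subcubes. So your route is consistent with, and arguably illuminates, the paper's machinery. One cosmetic point: the paper's property $\mathcal{M}_k$ is phrased with tree-dimension equal to $k$, and you only establish $\mathsf{td}(M)\le |E'|$; this is immaterial here because the theorem asserts the existence of a complete $k$-tree-median-decomposition for \emph{some} $k$, and you can simply take $k=\mathsf{td}(M)$, which is finite since $M$ is a median graph.
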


We define the \defin{tree-median-dimension} of a graph $G$, denoted by $\mathsf{tmd}(G)$, as the 
minimum integer $k$ such that $G$ has a complete $k$-tree-median-decomposition. By 
\autoref{thm: stavropoulos.completeMD} it follows that the tree-median-dimension is well defined.

By the following theorem we have that every $1$-tree-median-decomposition is 
a tree-decomposition, and vice versa.

\begin{theorem}[Buneman \cite{buneman1974characterisation}, Gavril \cite{gavril1974intersection}, 
and Walter \cite{walter1978representations, walter1972phdchordal}]
\label{thm:chordal.subtrees.character}
A graph $G$ is the intersection graph of subtrees of a tree if and only if $G$ is chordal.
\end{theorem}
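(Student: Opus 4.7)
The plan is to prove the two implications separately, with the Helly property of subtrees of a tree as the central tool. I would first isolate the Helly property as a lemma: any finite family of pairwise intersecting subtrees of a tree $T$ has a common vertex. The proof reduces to the case of three subtrees $T_a, T_b, T_c$: picking $a \in T_b \cap T_c$, $b \in T_a \cap T_c$, and $c \in T_a \cap T_b$, the median of $a, b, c$ in $T$ (which exists uniquely since trees are median graphs) lies in all three subtrees by their convexity.

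For the implication that a subtree intersection graph $G$ is chordal, I would argue by contradiction. Assume $G$ contains an induced cycle $v_1 v_2 \ldots v_k$ of length $k \geq 4$; let $(T, \beta)$ be a representation of $G$ and write $T_i := \beta^{-1}(v_i)$. For each edge $v_i v_{i+1}$ pick $x_i \in T_i \cap T_{i+1}$ (indices modulo $k$). By convexity, the unique $T$-path from $x_1$ to $x_3$ lies both in $T_2 \cup T_3$ (routing through $x_2$, since $I(x_1, x_2) \subseteq T_2$ and $I(x_2, x_3) \subseteq T_3$) and in $T_1 \cup T_k \cup T_{k-1} \cup \ldots \cup T_4$ (routing the long way through $x_k, x_{k-1}, \ldots, x_3$). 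Since the cycle is induced, $T_2$ is disjoint from each $T_j$ with $j \in \{4, \ldots, k\}$, and $T_3$ is disjoint from each $T_j$ with $j \in \{1, 5, 6, \ldots, k\}$. Intersecting the two containments, the path must lie in $(T_1 \cap T_2) \cup (T_3 \cap T_4)$. But these are disjoint subtrees of $T$ (disjoint because $T_1 \cap T_3 = \emptyset$), one containing $x_1$ and the other $x_3$, so no connected path can have its endpoints in the two distinct components, contradiction.

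For the converse, I would induct on $|V(G)|$. The base case (a single vertex represented by a one-node tree) is trivial. For the inductive step, invoke Dirac's classical theorem that every chordal graph has a simplicial vertex $v$, i.e., a vertex whose neighborhood $N(v)$ induces a clique. By induction, $G - v$ admits a representation $(T', \beta')$. Since $N(v)$ is a clique in $G - v$, the subtrees $\{(\beta')^{-1}(u) : u \in N(v)\}$ pairwise intersect, so by the Helly lemma they share a common vertex $t \in V(T')$. Form $T$ from $T'$ by attaching a new leaf $t_v$ to $t$, and define $\beta : V(T) \to 2^{V(G)}$ by setting $\beta(t') := \beta'(t')$ for $t' \in V(T')$ and $\beta(t_v) := N(v) \cup \{v\}$. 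A direct check shows that each $\beta^{-1}(u)$ is a subtree of $T$ (for $u \in N(v)$ one uses that $t \in (\beta')^{-1}(u)$ with $t_v$ pendant at $t$), and that the resulting intersection graph is exactly $G$: the subtree $\beta^{-1}(v) = \{t_v\}$ meets $\beta^{-1}(u)$ iff $u \in N(v)$, while all other intersections are unchanged.

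The main obstacle is the containment analysis in the first direction: the reduction to $(T_1 \cap T_2) \cup (T_3 \cap T_4)$ relies on every $v_j$ with $j \geq 5$ being non-adjacent to both $v_2$ and $v_3$, which is where the "no chord" hypothesis is used in full. The Helly lemma and its application in the inductive construction for the converse are standard, and Dirac's theorem supplies the simplicial vertex out of the box.
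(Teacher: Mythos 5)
The paper does not actually prove this statement---it is quoted from Buneman, Gavril, and Walter---so the only question is whether your argument stands on its own. Your converse direction (chordal $\Rightarrow$ subtree representation) is sound: the Helly property via medians, Dirac's simplicial vertex, and the pendant-leaf construction all check out (though the reduction of Helly from arbitrary finite families to triples deserves a sentence: replace two pairwise-intersecting subtrees by their intersection, which is again a nonempty subtree, and induct). The forward direction, however, has a genuine gap in its final step. You correctly derive that the $x_1$--$x_3$ path $P$ in $T$ satisfies $V(P)\subseteq (T_1\cap T_2)\cup(T_3\cap T_4)$ and that these two subtrees are disjoint, but you then conclude via ``no connected path can have its endpoints in the two distinct components.'' Two vertex-disjoint subtrees of a tree need not lie in distinct components of the subgraph of $T$ they induce: they can be joined by a single edge of $T$ (take $T=K_2$ and the two singleton subtrees). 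So a priori $P$ could consist of an initial segment inside $T_1\cap T_2$ followed, across one tree edge, by a final segment inside $T_3\cap T_4$; nothing you have written excludes this, and the contradiction does not yet follow.

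The gap is small, and the tool to close it is already in your write-up: use the median once more. Let $m:=\mathsf{median}(x_1,x_2,x_3)$ in $T$. Then $m$ lies on the $x_1$--$x_2$ path, which is contained in $T_2$ since $x_1,x_2\in V(T_2)$ and subtrees are convex, and on the $x_2$--$x_3$ path, which is contained in $T_3$; hence $m\in V(T_2)\cap V(T_3)$. But $m$ also lies on $P$, so $m\in (T_1\cap T_2)\cup(T_3\cap T_4)$, forcing $m\in T_1\cap T_3$ or $m\in T_2\cap T_4$, both of which are empty because the cycle is chordless and has length at least four. That is the contradiction your argument needs.
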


We omit the proof of the following proposition as it follows immediately from the corresponding definitions.

\begin{proposition}
\label{prop.completeMD.intersection}
Let $G$  be a graph and $k$ be a positive integer. 
Then $G$ has a complete $k$-tree-median-decomposition 
if and only if $G$ has the property $\mathcal{M}_{k}$.
\end{proposition}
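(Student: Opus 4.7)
The plan is to prove the equivalence by unwinding the two definitions in each direction, taking some care that the supergraph $H$ in the definition of a decomposition behaves correctly with respect to the bag-clique condition.

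For the forward direction, I would start with a complete $k$-tree-median-decomposition $(M,\gamma)$ of $G$. By definition this is a representation of some $k$-tree-median-completion $H$ of $G$, so $M$ is a median graph of tree-dimension $k$, each $M[\gamma^{-1}(v)]$ is a convex subgraph of $M$, and $H$ is the intersection graph of the family $\{\gamma^{-1}(v): v \in V(H)\}$. Since $V(H) = V(G)$, in order to conclude that $G$ itself has the property $\mathcal{M}_k$ via the same representation $(M,\gamma)$, I only need to show that $uv \in E(G)$ if and only if $\gamma^{-1}(u) \cap \gamma^{-1}(v) \neq \emptyset$. The forward implication is immediate because $G$ is a subgraph of $H$. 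For the converse, any $x \in \gamma^{-1}(u) \cap \gamma^{-1}(v)$ witnesses $u,v \in \gamma(x)$; completeness of the decomposition tells us that $\gamma(x)$ is a clique of $G$, so $uv \in E(G)$.

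For the backward direction, suppose $G$ has the property $\mathcal{M}_k$ via a representation $(M,\gamma)$. Taking $H = G$ as its own $k$-tree-median-completion, the pair $(M,\gamma)$ is tautologically a $k$-tree-median-decomposition of $G$. To see that it is complete, fix any $x \in V(M)$ and any $u,v \in \gamma(x)$: then $x \in \gamma^{-1}(u) \cap \gamma^{-1}(v)$, so these preimages intersect, and by the intersection-graph definition of $\mathcal{M}_k$ this forces $uv \in E(G)$. Thus $\gamma(x)$ is a clique of $G$, as required.

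The only mild subtlety is to keep straight that the definition of a $k$-tree-median-decomposition allows $(M,\gamma)$ to represent a proper supergraph $H$ of $G$, while the definition of the property $\mathcal{M}_k$ for $G$ requires $G$ itself (not a supergraph) to be the intersection graph of the convex-subgraph family. The completeness hypothesis (every bag is a clique of $G$, not merely of $H$) is precisely what closes this gap, so I expect this bookkeeping — rather than any genuine obstacle — to be the only point deserving explicit mention.
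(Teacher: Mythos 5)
Your proof is correct, and it is exactly the definitional unwinding that the paper itself omits (the paper states only that the proposition ``follows immediately from the corresponding definitions''). You correctly identify the one point worth writing down --- that completeness of the bags with respect to $G$, not merely the completion $H$, is what lets the same representation $(M,\gamma)$ serve for $G$ itself --- so nothing further is needed.
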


\autoref{corol.chordal.completeTD} and \autoref{cor.md.equiv.intersection} follow immediately by
\autoref{thm:chordal.subtrees.character} and \autoref{prop.completeMD.intersection}.

\begin{corollary}
\label{corol.chordal.completeTD}
A graph $G$ has a complete tree-decomposition if and only if $G$ is chordal.
\end{corollary}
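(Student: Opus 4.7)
The plan is to unpack the definition of a complete tree-decomposition and match it against the Buneman--Gavril--Walter characterization (\autoref{thm:chordal.subtrees.character}), much as \autoref{prop.completeMD.intersection} does for complete median-decompositions; I expect no real obstacle, since the "complete" condition is engineered exactly so that the chordal completion collapses onto $G$ itself.

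For the forward direction, suppose $G$ has a complete tree-decomposition $(T,\beta)$. By definition this is a representation of some chordal completion $H$ of $G$, so every edge of $G$ is an edge of $H$. Conversely, each edge $uv$ of $H$ corresponds to a pair whose associated subtrees of $T$ intersect, which means both $u$ and $v$ lie in some bag $\beta(t)$; by the completeness assumption that bag is a clique of $G$, so $uv\in E(G)$. Therefore $E(G)=E(H)$, and since $H$ is chordal, so is $G$.

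For the converse, assume $G$ is chordal. Apply \autoref{thm:chordal.subtrees.character} to obtain a tree $T$ and a family of subtrees $\{T_{v}:v\in V(G)\}$ realizing $G$ as their intersection graph; equivalently, a map $\beta\colon V(T)\rightarrow 2^{V(G)}$ with $T[\beta^{-1}(v)]$ connected for every $v$. This pair $(T,\beta)$ is a representation of $G$ itself (a chordal completion of $G$, trivially), hence a tree-decomposition of $G$. It remains to verify that every bag is a clique of $G$: if $u,v\in\beta(t)$, then $t\in V(T_{u})\cap V(T_{v})$, so $T_{u}\cap T_{v}\neq\emptyset$ and thus $uv\in E(G)$ by the intersection-graph definition. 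So $(T,\beta)$ is a complete tree-decomposition of $G$.

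The only subtlety worth flagging is the first direction, where one must be careful not to confuse $G$ with the chordal completion $H$ coming from the definition of a tree-decomposition; the completeness of the bags forces the two to coincide, after which chordality of $G$ follows from chordality of $H$.
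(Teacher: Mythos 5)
Your proof is correct and follows the same route the paper intends: the paper declares the corollary ``immediate'' from \autoref{thm:chordal.subtrees.character} together with the definition-unpacking of \autoref{prop.completeMD.intersection}, and your argument simply writes out those details (in particular, the observation that completeness forces the chordal completion $H$ to equal $G$). No gaps.
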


\begin{corollary}
\label{cor.md.equiv.intersection}
Let $G$ be a graph. Then $\mathsf{tmd}(G)=k$ if and only if 
$k$ is the minimum integer for which $G$ has the property $\mathcal{M}_{k}$.
\end{corollary}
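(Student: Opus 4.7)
The plan is to observe that this corollary is essentially a definitional consequence of \autoref{prop.completeMD.intersection}, obtained by taking the minimum $k$ on both sides of the equivalence stated there.

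First, I would recall the definition: $\mathsf{tmd}(G)$ was introduced as the minimum positive integer $k$ such that $G$ admits a complete $k$-tree-median-decomposition, and \autoref{thm: stavropoulos.completeMD} guarantees that this minimum is attained (hence $\mathsf{tmd}(G)$ is a well-defined positive integer). Second, \autoref{prop.completeMD.intersection} asserts that, for each fixed positive integer $k$, the two conditions ``$G$ has a complete $k$-tree-median-decomposition'' and ``$G$ has property $\mathcal{M}_k$'' are logically equivalent.

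From this the corollary follows at once: the sets
\[
A := \{k \in \mathbb{Z}_{>0} : G \text{ has a complete } k\text{-tree-median-decomposition}\}
\]
and
\[
B := \{k \in \mathbb{Z}_{>0} : G \text{ has property } \mathcal{M}_k\}
\]
coincide by the proposition, so $\min A = \min B$. The left-hand side is $\mathsf{tmd}(G)$ by definition, and the right-hand side is the minimum $k$ for which $G$ has property $\mathcal{M}_k$. Thus $\mathsf{tmd}(G)=k$ if and only if $k$ is the least such integer, which is exactly the statement of the corollary.

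There is no real obstacle here; the content of the corollary is entirely absorbed by \autoref{prop.completeMD.intersection}. The appeal to \autoref{thm:chordal.subtrees.character} is tacit, entering only through the fact that the $k=1$ case of the proposition matches up with the notion of a (complete) tree-decomposition as discussed in \autoref{corol.chordal.completeTD}. Accordingly, the written proof should simply cite \autoref{prop.completeMD.intersection} and unfold the definition of $\mathsf{tmd}(G)$.
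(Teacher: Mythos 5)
Your proof is correct and matches the paper's approach: the paper likewise derives this corollary as an immediate consequence of \autoref{prop.completeMD.intersection} together with the definition of $\mathsf{tmd}$ (with \autoref{thm: stavropoulos.completeMD} ensuring the minimum exists). Nothing further is needed.
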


\subsection{A characterization of the graphs of chordality \texorpdfstring{$k$}{TEXT}}
\label{sub:charact.chordality}

The main ingredient that we need for the proof of \autoref{thm:chordality.all.intro} is the
following:

\begin{theorem}
\label{thm:chordality.equiv.tmd}
If $G$ is a graph, then the tree-median-dimension of $G$ is equal to its chordality.
\end{theorem}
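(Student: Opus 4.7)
The plan is to prove the equality $\mathsf{tmd}(G) = \mathsf{chor}(G)$ by establishing the two inequalities separately, in each case invoking the Buneman--Gavril--Walter characterization (\autoref{thm:chordal.subtrees.character}) to translate between chordal graphs and their subtree-intersection representations.

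For $\mathsf{tmd}(G) \leq \mathsf{chor}(G)$, I would write $G = G_{1} \cap \cdots \cap G_{k}$ with each $G_{i}$ chordal and equipped with a representation $(T_{i}, \beta_{i})$. The natural candidate is the product median graph $M := T_{1} \cartprod \cdots \cartprod T_{k}$ together with the assignment $\gamma^{-1}(v) := \beta_{1}^{-1}(v) \times \cdots \times \beta_{k}^{-1}(v)$. The routine verifications are that $M$ is a median graph of tree-dimension at most $k$ (the Cartesian product of median graphs is median, and $M$ embeds isometrically into itself), that each $\gamma^{-1}(v)$ is convex in $M$ (being a product of convex subtrees), that $\gamma^{-1}(u) \cap \gamma^{-1}(v) \neq \emptyset$ iff $uv \in E(G_{i})$ for every $i$ iff $uv \in E(G)$, and that each bag $\gamma(x) = \bigcap_{i} \beta_{i}(\pi_{i}(x))$ is an intersection of cliques of the $G_{i}$, hence a clique of $G$. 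This produces a complete $k$-tree-median-decomposition of $G$.

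For the reverse inequality, I would start with a complete $k$-tree-median-decomposition $(M,\gamma)$ of $G$ together with an isometric embedding $\phi \colon M \hookrightarrow T_{1} \cartprod \cdots \cartprod T_{k}$ and coordinate projections $\pi_{i}$, and set $S_{i}(v) := \pi_{i}(\phi(\gamma^{-1}(v)))$ for every $v \in V(G)$ and $i \in [k]$. Since $\gamma^{-1}(v)$ is convex and hence connected in $M$, each $S_{i}(v)$ is a subtree of $T_{i}$, and so the intersection graph $G_{i}$ of the family $\{S_{i}(v)\}_{v \in V(G)}$ is chordal by \autoref{thm:chordal.subtrees.character}. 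The inclusion $E(G) \subseteq \bigcap_{i} E(G_{i})$ is immediate, since any common point of $\gamma^{-1}(u)$ and $\gamma^{-1}(v)$ projects to a common point of $S_{i}(u)$ and $S_{i}(v)$ in every factor.

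The main obstacle will be the reverse inclusion: given a non-edge $uv$ of $G$, I must produce some coordinate $i$ with $S_{i}(u) \cap S_{i}(v) = \emptyset$. The plan is to exploit the classical hyperplane-separation property of median graphs---that any two disjoint convex subsets can be separated by a $\Theta$-class of the Djokovi\'c--Winkler relation---to obtain a $\Theta$-class $F$ of $M$ with $\gamma^{-1}(u)$ and $\gamma^{-1}(v)$ lying in opposite components of $M - F$. Under the isometric embedding $\phi$, every $\Theta$-class of $M$ corresponds to edges parallel to a single edge $e$ in a single factor $T_{i}$, so $\pi_{i}$ carries the two halfspaces of $M - F$ into the two components of $T_{i} - e$, forcing $S_{i}(u)$ and $S_{i}(v)$ to be disjoint subtrees of $T_{i}$. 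Verifying carefully this correspondence between $\Theta$-classes of $M$ and factor coordinates of the embedding is the main technical point, and will follow from the standard theory of isometric embeddings of partial cubes into products.
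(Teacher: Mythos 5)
Your proposal is correct, and while the easy direction $\mathsf{tmd}(G)\leq\mathsf{chor}(G)$ coincides with the paper's \autoref{lem.md.from.TDs} (same product median graph, same bags $\gamma(x)=\bigcap_i\beta_i(\pi_i(x))$), your proof of the hard direction is genuinely different. The paper routes everything through the intermediate invariant "minimum size of a non-edge-separating family of tree-decompositions" (\autoref{lem:chordality.family.of.TDs}), extracts the $k$ tree-decompositions from a complete $k$-tree-median-decomposition via Stavropoulos's structural theorem (\autoref{thm.stavropoulos.from.md.to.TDs}), and then proves the separation of each non-edge by a coordinate-by-coordinate induction (the sets $S_j$ in \autoref{lem.takeTDs.from.complete.MD}), using the Helly property for subtrees and repeated median operations to show that otherwise some bag of $M$ would contain both ends of the non-edge. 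You instead define the subtrees $S_i(v)=\pi_i(\phi(\gamma^{-1}(v)))$ directly (which, note, are exactly the sets $\beta_i^{-1}(v)$ produced by Stavropoulos's theorem, so the two constructions agree) and obtain the separation in one step from the Kakutani halfspace-separation property of median graphs: the disjoint convex sets $\gamma^{-1}(u)$ and $\gamma^{-1}(v)$ (disjoint because the decomposition is complete --- worth stating explicitly) lie in complementary halfspaces bounded by a $\Theta$-class, and a short distance computation shows that under an isometric embedding into $T_1\cartprod\cdots\cartprod T_k$ each $\Theta$-class is precisely the set of edges mapping to one fixed edge $e$ of one fixed factor $T_i$, so the two halfspaces project into the two components of $T_i-e$. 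Your argument is shorter and avoids Stavropoulos's Theorem~6.7 as a black box (only the defining isometric embedding is needed), at the cost of importing the classical but nontrivial facts that median graphs satisfy Kakutani separation for disjoint convex sets and that their halfspaces are exactly the sides of $\Theta$-classes; the paper's argument is self-contained modulo the Helly property and, as a byproduct, establishes the tree-decomposition characterization that appears as item (2) of \autoref{thm:chordality.all.intro}.
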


We begin with the following easy observation about chordality.

\begin{lemma}
\label{lem:chordality.family.of.TDs}
Let $G$ be a graph. Then the chordality of $G$ is equal to the minimum size 
of a non-edge-separating family of tree-decompositions of $G$.
\end{lemma}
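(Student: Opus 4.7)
The plan is to set up a direct correspondence between chordal completions of $G$ and tree-decompositions of $G$, and then translate the definition of chordality into the language of non-edge-separating families on the nose. The key observation is built into the definitions: by \autoref{corol.chordal.completeTD} (equivalently, \autoref{thm:chordal.subtrees.character}), a graph $H$ on vertex set $V(G)$ is a chordal completion of $G$ if and only if $H$ has a representation $(T,\beta)$, i.e.\ if and only if $(T,\beta)$ is a tree-decomposition of $G$. Thus, tree-decompositions of $G$ are in bijective correspondence with chordal completions of $G$, and a tree-decomposition separates a non-edge $e$ exactly when the corresponding chordal completion $H$ satisfies $e \notin E(H)$.

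Given this dictionary, I would prove both inequalities separately. First I would show $\mathsf{chor}(G) \ge |\mathcal{T}|_{\min}$: if $G_1, \ldots, G_k$ are chordal graphs on $V(G)$ with $\bigcap_{i \in [k]} E(G_i) = E(G)$, then each $G_i$ satisfies $E(G) \subseteq E(G_i)$, so each $G_i$ is a chordal completion of $G$ and admits a tree-decomposition $(T_i, \beta_i)$ of $G$. For any non-edge $e$ of $G$, since $e \notin \bigcap_{i} E(G_i)$, there is some index $j$ with $e \notin E(G_j)$, meaning $(T_j, \beta_j)$ separates $e$; hence $\{(T_i,\beta_i)\}_{i\in[k]}$ is a non-edge-separating family of size $k$.

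For the converse inequality $\mathsf{chor}(G) \le |\mathcal{T}|_{\min}$, I would start from a non-edge-separating family $\{(T_i,\beta_i)\}_{i\in[k]}$ of tree-decompositions of $G$, and let $H_i$ be the chordal completion of $G$ corresponding to $(T_i,\beta_i)$. Since $E(G)\subseteq E(H_i)$ for each $i$, we have $E(G)\subseteq \bigcap_{i\in[k]} E(H_i)$. For the reverse inclusion, take any $e \in \binom{V(G)}{2} \setminus E(G)$; the non-edge-separating property yields some $j$ with $e \notin E(H_j)$, so $e \notin \bigcap_i E(H_i)$. Hence $\bigcap_i E(H_i) = E(G)$, exhibiting $G$ as the intersection of $k$ chordal graphs.

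Neither direction presents any real obstacle: once the definitional correspondence between chordal completions and tree-decompositions is recognized, the argument is a straightforward rewriting of the statement \enquote{$e$ is a non-edge of $G$ iff $e$ is a non-edge of some $H_i$} as \enquote{$E(G)=\bigcap_i E(H_i)$}. The only place one needs to be even slightly careful is to note that an arbitrary chordal graph with edge set containing $E(G)$ is automatically a chordal completion, so that the intersection representation from chordality can indeed be fed into the tree-decomposition machinery without loss.
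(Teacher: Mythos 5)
Your proposal is correct and follows essentially the same route as the paper: both directions are obtained by passing back and forth between chordal completions and their representation trees, observing that a tree-decomposition separates a non-edge exactly when that pair is a non-edge of the corresponding completion. The one point you flag explicitly---that each chordal graph in an intersection representation is automatically a chordal completion of $G$---is used implicitly in the paper's argument as well.
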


\begin{proof}[Proof of \autoref{lem:chordality.family.of.TDs}]
Let $m$ be the minimum size 
of a non-edge-separating family of tree-decompositions of $G$.

Let $k:=\mathsf{chor}(G)$, and let 
$G_{1}, \ldots , G_{k}$ be $k$ chordal graphs such that 
$G=\bigcap_{i\in [k]}G_{i}$. For each $i\in [k]$ let $(T_{i}, \beta_{i})$ 
be the tree-decomposition of $G$ which is obtained from the chordal completion $G_{i}$, and let 
$\mathcal{T}:=\{(T_{i}, \beta_{i}): i \in [k]\}$. 
Let $\{u,v\}$ be a non-edge of $G$. 
Then there exists $i$ such that $\{u,v\}$ is a non-edge of $G_{i}$. Since
the bags of $(T_{i}, \beta_{i})$ are cliques in $G_{i}$, it follows that 
$(T_{i}, \beta_{i})$ separates $\{u,v\}$. Thus $m \leq \mathsf{chor}(G)$.

Let $\mathcal{T}=\{(T_{i}, \beta_{i})\}_{i\in [m]}$ 
be a non-edge-separating family of tree-decompositions of $G$ such that $|\mathcal{T}|$ is minimized.
For each $i\in[k]$, let $G_{i}$ be the chordal completion of $G$ which corresponds to the tree-decomposition 
$(T_{i}, \beta_{i})$. Let $\{u,v\}$ be a non-edge of $G$ and let $(T_{i},\beta_{i})$ be 
the tree-decomposition of $G$ which separates $\{u,v\}$. Then $\{u,v\} \notin E(G_{i})$. 
Thus, $G=\bigcap_{i\in[k]}G_{i}$, and $\mathsf{chor}(G)\leq m$.
\end{proof}  

In light of \autoref{lem:chordality.family.of.TDs}, in order to prove \autoref{thm:chordality.equiv.tmd}, 
it suffices to prove that the
tree-median-dimension of a graph $G$ is equal to the minimum size of a non-edge-separating family of 
tree-decompositions of $G$. 
We begin with the following lemma, whose proof we omit, as it follows immediately 
from the corresponding definitions.

\begin{lemma}
\label{lem.projection.of.shortest.median.path}
Let $M$ be a median graph, and let $T_{1}, \ldots , T_{k}$ be trees such that there
exists an isometric embedding $\phi\colon V(M)\rightarrow T_{1}\cartprod \cdots \cartprod T_{k}$.
Let $a,b \in V(M)$, let $\pi_{i}:V(T_{1} \cartprod \cdots \cartprod T_{k})\rightarrow V(T_{j})$
be the projection to the $i$-th coordinate, and let $Q=x_{1},\ldots, x_{l}$ be a shortest $(a,b)$-path in $M$.
Then the following hold:
\begin{enumerate}[topsep=0.25cm,itemsep=-0.4cm,partopsep=0.5cm,parsep=0.5cm]
    \item Let $i\in [k]$. If $\pi_{i}(\phi(a))=\pi_{i}(\phi(b))=:t_{i}$, 
    then for every $j\in[l]$ we have $\pi_{i}(\phi(x_{j}))=t_{i}$.
    \item For every $i\in[k]$, we have that $W_{i}\coloneqq \pi_{i}(\phi(x_{1})),\ldots, \pi_{i}(\phi(x_{l}))$ is a sequence of vertices of $T_{i}$ which contains exactly the vertices of the 
    $\left(\pi_{i}(\phi(x_{1})), \pi_{i}(\phi(x_{l}))\right)$-path$=:P$ in $T_{i}$, and these 
    vertices appear in $W_{i}$, possibly with repetitions, in the same order as in $P$.
\end{enumerate}
\end{lemma}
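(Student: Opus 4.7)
The plan is to exploit that $\phi$ is an isometric embedding into a Cartesian product of trees, where distances add coordinatewise. The starting identity is
\[
l-1=d_M(a,b)=d_{T_1\cartprod\cdots\cartprod T_k}(\phi(a),\phi(b))=\sum_{i=1}^{k}d_{T_i}(\pi_i(\phi(a)),\pi_i(\phi(b))),
\]
which combines the facts that $Q$ is a geodesic in $M$ and $\phi$ is isometric.

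First I would note that consecutive vertices $x_j,x_{j+1}$ of $Q$ are adjacent in $M$, so their images are adjacent in the product. By definition of the Cartesian product, $\phi(x_j)$ and $\phi(x_{j+1})$ agree in all but one coordinate, and differ by a single edge in the remaining coordinate. Fixing $i\in[k]$, this means every consecutive pair in $W_i$ is either a single edge of $T_i$ or a repeated vertex. Let $s_i$ count the indices $j$ for which $\pi_i(\phi(x_j))\neq\pi_i(\phi(x_{j+1}))$. Then $\sum_{i=1}^{k} s_i=l-1$, since each step of $\phi(Q)$ changes exactly one coordinate. Suppressing the stationary runs in $W_i$ yields a walk in $T_i$ from $\pi_i(\phi(a))$ to $\pi_i(\phi(b))$ of length $s_i$, so $s_i\geq d_{T_i}(\pi_i(\phi(a)),\pi_i(\phi(b)))$. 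Summing and comparing with the identity above forces equality: $s_i=d_{T_i}(\pi_i(\phi(a)),\pi_i(\phi(b)))$ for every $i$.

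From here both claims follow. For (1), if $\pi_i(\phi(a))=\pi_i(\phi(b))$, then $s_i=0$, so $W_i$ is constantly $t_i$. For (2), the compressed walk in $T_i$ has length equal to the distance between its endpoints, and a walk of minimum length between two vertices of a tree is the unique path between them, visiting each of its vertices exactly once. Hence the entries of $W_i$ are exactly the vertices of $P$, appearing left-to-right in the order in which they lie on $P$, and any repetitions in $W_i$ occur only in contiguous runs.

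The main (mild) obstacle is the pigeonhole comparison combining $\sum_i s_i=l-1$ with the pointwise lower bounds $s_i\geq d_{T_i}(\pi_i(\phi(a)),\pi_i(\phi(b)))$; once this is in hand everything else is routine. A minor bookkeeping step distinguishes genuine re-visits in $W_i$ (which would produce a non-trivial closed sub-walk in a tree after compression, and are therefore impossible) from the harmless consecutive repetitions allowed in the statement.
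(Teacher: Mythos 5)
Your proof is correct. The paper actually omits the proof of this lemma entirely, asserting that it "follows immediately from the corresponding definitions"; your argument — coordinatewise additivity of distance in the Cartesian product, the counting identity $\sum_i s_i = l-1$ forcing $s_i = d_{T_i}(\pi_i(\phi(a)),\pi_i(\phi(b)))$, and the fact that a minimum-length walk in a tree is the unique path — is exactly the standard argument that fills in that omission, and I see no gaps in it.
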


\begin{lemma}
\label{lem.md.from.TDs}
Let $G$ be a graph.
If $\mathcal{T}$ is a non-edge-separating family of tree-decompositions of $G$, then the tree-median-dimension of $G$ is at most $|\mathcal{T}|$.
\end{lemma}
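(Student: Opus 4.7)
Given a non-edge-separating family $\mathcal{T} = \{(T_{i}, \beta_{i})\}_{i \in [k]}$ of tree-decompositions of $G$ with $k = |\mathcal{T}|$, I would construct an explicit complete $k$-tree-median-decomposition $(M, \gamma)$ of $G$ by taking the Cartesian product of the decomposition trees. Set $M := T_{1} \cartprod \cdots \cartprod T_{k}$. Cartesian products of median graphs are median graphs and each $T_{i}$ is a tree (hence a median graph), so $M$ is a median graph, and the identity embedding witnesses $\mathsf{td}(M) \leq k$. For each $v \in V(G)$, set $\sigma(v) := \beta_{1}^{-1}(v) \times \cdots \times \beta_{k}^{-1}(v) \subseteq V(M)$ and define $\gamma \colon V(M) \to 2^{V(G)}$ by $\gamma(x) := \{v \in V(G) : x \in \sigma(v)\}$, so that $\gamma^{-1}(v) = \sigma(v)$ for every $v \in V(G)$.

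Three things then need to be checked. First, $\sigma(v)$ is convex in $M$: each $\beta_{i}^{-1}(v)$ induces a connected subgraph, hence a subtree, of $T_{i}$, and every subtree of a tree is convex; since distance in a Cartesian product is the sum of the coordinate distances, a product of coordinate-convex sets is convex in the product. Second, letting $G_{i}$ denote the chordal completion of $G$ corresponding to $(T_{i}, \beta_{i})$, the intersection graph of the family $\{\sigma(v) : v \in V(G)\}$ is $G$: for each edge $uv \in E(G) \subseteq E(G_{i})$ one picks $t_{i} \in \beta_{i}^{-1}(u) \cap \beta_{i}^{-1}(v)$ for every $i$ to witness $(t_{1}, \ldots, t_{k}) \in \sigma(u) \cap \sigma(v)$, and for each non-edge $\{u, v\}$ of $G$ the non-edge-separating hypothesis produces some $i$ with $\beta_{i}^{-1}(u) \cap \beta_{i}^{-1}(v) = \emptyset$, forcing $\sigma(u) \cap \sigma(v) = \emptyset$. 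Third, the decomposition is complete: if $u, v \in \gamma(x)$ are distinct and $x = (t_{1}, \ldots, t_{k})$, then $t_{i} \in \beta_{i}^{-1}(u) \cap \beta_{i}^{-1}(v)$ for every $i$, so $uv \in E(G_{i})$ for every $i$, and the argument in the proof of \autoref{lem:chordality.family.of.TDs} shows that a non-edge-separating family forces $G = \bigcap_{i} G_{i}$, yielding $uv \in E(G)$. Combining these three checks with \autoref{prop.completeMD.intersection} gives $\mathsf{tmd}(G) \leq k$, as required.

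\textbf{Expected obstacle.} The construction is essentially forced by the product-of-trees viewpoint already implicit in \autoref{lem.projection.of.shortest.median.path}, so I do not expect a real obstacle; the verification relies only on standard facts about Cartesian products of median graphs (products of median graphs are median, products of convex sets are convex, and connected subgraphs of trees are convex). The one step where the non-edge-separating hypothesis is genuinely used is the completeness check, and it enters there precisely through the identity $G = \bigcap_{i} G_{i}$ already isolated in the preceding lemma.
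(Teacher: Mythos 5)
Your proposal is correct and is essentially the paper's proof: you take the same median graph $M = T_{1}\cartprod\cdots\cartprod T_{k}$, and your bag function coincides with the paper's, since $x\in\sigma(v)$ if and only if $v\in\bigcap_{i}\beta_{i}(\pi_{i}(x))$. The only cosmetic difference is that you verify convexity directly from the product metric rather than invoking \autoref{lem.projection.of.shortest.median.path}, and you spell out the representation and completeness checks a bit more explicitly than the paper does.
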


\begin{proof}[Proof of \autoref{lem.md.from.TDs}]
Let $\mathcal{T}:=\{(T_{i},\beta_{i})\}_{i\in [k]}$ be a family of $k$ tree-decompositions of $G$ as in the statement of the lemma. We construct a complete $i$-tree-median-decomposition of $G$, with $i\leq k$. Let $M:=T_{1}\cartprod \ldots \cartprod T_{k}$. Then $M$ is a median graph of tree-dimension at most $k$. Let $\gamma: V(M) \rightarrow 2^{V(G)}$ defined as follows: for every $x\in V(M)$, we have $\gamma(x):= \cap_{i\in[k]}\beta_{i}(\pi_{i}(x))$.

We claim that for every $v \in V(G)$, the subgraph $M[\gamma^{-1}(v)]$ of $M$ is convex. Indeed, the claim follows from \autoref{lem.projection.of.shortest.median.path}, and the fact that the Cartesian product of connected graphs is a connected graph.

We claim that for every $x \in V(M)$, the bag $\gamma(x)$ is a clique. Indeed, this follows from the definition of $\gamma$ and the fact that for every non-edge of $G$ there exists $i\in[k]$ such that no bag of the tree-decomposition $(T_{i},\beta_{i})$ contains both $u$ and $v$.

By the above it follows that $(M,\gamma)$ is a complete $i$-tree-median-decomposition of $G$ which witnesses that $\mathsf{tmd}(G)\leq k$.
\end{proof}

In order to complete the proof of \autoref{thm:chordality.equiv.tmd}, it remains to prove that the minimum size of a non-edge-separating family of tree-decompositions of a graph $G$ is upper-bounded by the tree-median-dimension of $G$. To this end we need some preliminary results. We begin with the statement of a theorem of Stavropoulos \cite{stavropoulos2016graph} which states that given a $k$-tree-median-decomposition of a graph $G$, one can obtain a family of $k$ tree-decompositions of $G$ which satisfy certain nice properties. We then show that if we apply this theorem to a complete $k$-tree-median-decomposition, then the family of $k$ tree-decompositions that we get is non-edge-separating.

\begin{theorem}[{Stavropoulos, \cite[Lemma 6.1, Theorem 6.7]{stavropoulos2016graph}}]
\label{thm.stavropoulos.from.md.to.TDs}
Let $G$ be a graph, and let $(M,\gamma)$ be a $k$-tree-median-decomposition of $G$. Then there exists a family $\mathcal{T} = \{(T_{i},\beta_{i})\}_{i\in [k]}$ of $k$ tree-decompositions of $G$ such that: 
\begin{enumerate}[topsep=0.25cm,itemsep=-0.4cm,partopsep=0.5cm,parsep=0.5cm]
    \item There exists an isometric embedding $\phi$ of $M$ to the graph $T_{1} \cartprod \cdots \cartprod T_{k}$.
    
    \item For every $i\in [k]$ and for every $t\in V(T_{i})$, we have $\phi(V(M))\cap \pi_{i}^{-1}(t)\neq \emptyset$, where $\pi_{i}:V(T_{1} \cartprod \cdots \cartprod T_{k})\rightarrow V(T_{j})$ is the projection to the $i$-th coordinate.
    
    \item For every $x \in V(M)$, we have 
    $\gamma(x) = \bigcap_{\pi_{i}(\phi(x)), i\in[k]} \beta_{i}(\pi_{i}(\phi(x)))$.
    
    \item For every $i\in [k]$, and for every $t\in V(T_{i})$, we have $\beta_{i}(t)= \bigcup_{ \{x \in V(M): \pi_{i}(\phi(x))=t\} } \gamma(x)$.
\end{enumerate}
\end{theorem}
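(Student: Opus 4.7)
The plan is to start from any isometric embedding $\phi : V(M) \hookrightarrow V(T_{1}^{0}\cartprod\cdots\cartprod T_{k}^{0})$ witnessing $\mathsf{td}(M)\leq k$, trim the factor trees, and then read the $\beta_{i}$ off from item 4. To arrange item 2, set $T_{i}:=T_{i}^{0}[\pi_{i}(\phi(V(M)))]$; using \autoref{lem.projection.of.shortest.median.path}, for any $a,b\in V(M)$ and any shortest $(a,b)$-path $Q$ in $M$, the sequence $\pi_{i}(\phi(Q))$ traces the unique $\pi_{i}(\phi(a))$-to-$\pi_{i}(\phi(b))$ path in $T_{i}^{0}$, so every vertex of that path lies in $\pi_{i}(\phi(V(M)))$. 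Hence $T_{i}$ is a subtree of $T_{i}^{0}$, distances in $T_{i}$ coincide with those in $T_{i}^{0}$, and $\phi$ remains an isometric embedding into $T_{1}\cartprod\cdots\cartprod T_{k}$. Items 1 and 2 follow immediately.

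Next, define $\beta_{i}(t):=\bigcup_{\{x\in V(M)\,:\,\pi_{i}(\phi(x))=t\}}\gamma(x)$ for $t\in V(T_{i})$, which is item 4 by construction. I would verify that $(T_{i},\beta_{i})$ is a tree-decomposition of $G$ as follows. First, $\{t\in V(T_{i})\,:\,v\in\beta_{i}(t)\}=\pi_{i}(\phi(\gamma^{-1}(v)))$; since $\gamma^{-1}(v)$ is convex in $M$, $M[\gamma^{-1}(v)]$ is connected, and the same projection argument as above shows this image is connected in $T_{i}$. Second, every edge $uv$ of $G$ lies in some bag: the intersection-graph hypothesis on $(M,\gamma)$ yields $x\in\gamma^{-1}(u)\cap\gamma^{-1}(v)$, and then $\{u,v\}\subseteq\gamma(x)\subseteq\beta_{i}(\pi_{i}(\phi(x)))$ for every $i$.

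The main obstacle is item 3. The inclusion $\gamma(x)\subseteq\bigcap_{i}\beta_{i}(\pi_{i}(\phi(x)))$ is immediate. For the reverse, suppose $v\in\beta_{i}(\pi_{i}(\phi(x)))$ for every $i\in[k]$; then for each $i$ there is $x_{i}\in\gamma^{-1}(v)$ with $\pi_{i}(\phi(x_{i}))=\pi_{i}(\phi(x))$, and the goal is to show $x\in\gamma^{-1}(v)$. Two structural facts drive the argument: (a) in a Cartesian product of trees, geodesics factor coordinate-wise, so the intersection of the three pairwise geodesic intervals of any triple of vertices is exactly the coordinate-wise tuple of tree-medians; (b) since $M$ is median and isometrically embedded, the $M$-median of three vertices of $M$ coincides with the coordinate-wise median of their $\phi$-images. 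Set $m_{1}:=x_{1}$, and for $2\leq i\leq k$ let $m_{i}:=\mathsf{median}(m_{i-1},x_{i},x)$ in $M$. Convexity of $\gamma^{-1}(v)$ yields $m_{i}\in I(m_{i-1},x_{i})\subseteq\gamma^{-1}(v)$. An induction on $i$, using (a), (b), and the elementary fact that the median of three tree-vertices, two of which agree, is that common vertex, gives $\pi_{j}(\phi(m_{i}))=\pi_{j}(\phi(x))$ for every $j\leq i$. Consequently $\phi(m_{k})=\phi(x)$; injectivity of $\phi$ forces $m_{k}=x$, so $v\in\gamma(x)$. The delicate step is setting up this median induction cleanly; everything else reduces to bookkeeping with \autoref{lem.projection.of.shortest.median.path} and the convexity hypothesis on each $\gamma^{-1}(v)$.
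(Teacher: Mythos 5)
The paper does not actually prove this statement: it is imported verbatim from Stavropoulos's thesis (cited as Lemma 6.1 and Theorem 6.7 of \cite{stavropoulos2016graph}) and used as a black box in the proof of \autoref{lem.takeTDs.from.complete.MD}. So there is no in-paper argument to compare against, and your proposal is a genuinely independent, self-contained derivation. Having checked it, I believe it is correct. The trimming step is sound: by \autoref{lem.projection.of.shortest.median.path}(2), $\pi_{i}(\phi(V(M)))$ is geodesically closed in $T_{i}^{0}$, hence induces a subtree, and since subtrees are isometric subgraphs the embedding survives the trimming; the same argument gives connectivity of each $\beta_{i}^{-1}(v)=\pi_{i}(\phi(\gamma^{-1}(v)))$, and the edge-coverage check correctly uses only that $\gamma$ represents a \emph{completion} of $G$. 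Your item-3 argument is the real content, and its two pillars both hold: geodesics in $T_{1}\cartprod\cdots\cartprod T_{k}$ factor coordinate-wise, so medians there are computed coordinate-wise; and an isometric embedding of a median graph into a median graph sends the median of a triple to the median of the image triple (the image of the $M$-median lies in all three intervals of the images, and that intersection is a singleton in the target). The iteration $m_{i}:=\mathsf{median}(m_{i-1},x_{i},x)$, staying inside $\gamma^{-1}(v)$ by convexity while freezing one more coordinate of $\phi(x)$ at each step, then forces $m_{k}=x$. This is, pleasingly, the same median-fixing induction the paper itself deploys later in \autoref{median.Sk.nonempty} (with $a'=\mathsf{median}(a,x,z)$), so your proof makes the paper's use of the imported theorem essentially self-contained. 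The only loose end is bookkeeping: the paper's median is defined only for three \emph{distinct} vertices, so you should dispose of the degenerate cases explicitly (if $m_{i-1}=x$ or $x_{i}=x$ you are already done; if $m_{i-1}=x_{i}$ set $m_{i}:=m_{i-1}$). This costs two lines and does not affect correctness.
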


Given a set $X$, we say that a family $\mathcal{X}:=\{X_{i}\}_{i\in I}$ of subsets of $X$ satisfies the \defin{Helly property} if for every $I'\subseteq I$ the following holds: if $X_{i}\cap X_{j}\neq \emptyset$ for all $i,j\in I'$, then we have that $\bigcap_{i\in I'}X_{i}\neq \emptyset$. The following is a folklore (see, for example, \cite[Proposition 4.7]{golumbic2004algorithmic}):

\begin{proposition}
\label{helly.for.subtrees}
Every family of subtrees of a tree satisfies the Helly property.
\end{proposition}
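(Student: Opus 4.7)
The plan is to prove the Helly property by induction on the size of the subfamily $I'$, with the base cases of one or two subtrees being immediate. The heart of the argument is the three-subtree case, which I would isolate as a key lemma: if $A, B, C$ are pairwise intersecting subtrees of a tree $T$, then $A \cap B \cap C \neq \emptyset$. Once this is established, the induction step is straightforward because the intersection of two subtrees of a tree is again a subtree (in a tree, any two vertices are joined by a unique path, so if two vertices lie in both subtrees, the unique path joining them does too, giving connectedness).

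To prove the three-subtree case, I would pick vertices $a \in B \cap C$, $b \in A \cap C$, and $c \in A \cap B$, and then appeal to the standard fact that in a tree there is a unique vertex $m$ lying on all three of the $(a,b)$-, $(b,c)$-, and $(a,c)$-paths (the Steiner point or median of the triple $\{a,b,c\}$). This is just the observation that the union of the three pairwise paths in $T$ forms a subtree consisting of three paths emanating from a common vertex $m$. Since $A$ is a subtree containing both $b$ and $c$, it contains the unique $(b,c)$-path in $T$ and hence contains $m$; the analogous argument with $B \supseteq \{a,c\}$ and $C \supseteq \{a,b\}$ shows $m \in B \cap C$, so $m \in A \cap B \cap C$.

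For the inductive step, suppose $\{T_i\}_{i \in I'}$ is a family of pairwise intersecting subtrees with $|I'| = n \geq 3$, and fix distinct $i_1, i_2 \in I'$. By the three-subtree lemma, for every $j \in I' \setminus \{i_1, i_2\}$ we have $T_{i_1} \cap T_{i_2} \cap T_j \neq \emptyset$, so replacing $T_{i_1}$ and $T_{i_2}$ by their intersection yields a pairwise intersecting family of $n-1$ subtrees (using that $T_{i_1} \cap T_{i_2}$ is itself a subtree). The inductive hypothesis then produces a vertex in the intersection of the entire family. If $I'$ is infinite, one reduces to the finite case by noting that the tree is finite (so there are only finitely many distinct subtrees of $T$), or equivalently by invoking the finite intersection property for the finite tree $T$.

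The main obstacle is really just the three-subtree case, and the elegance of the proof hinges on identifying the Steiner/median vertex $m$ and using uniqueness of paths in a tree to force it into each of $A$, $B$, and $C$; the rest of the proof is a clean induction that exploits the closure of subtrees under intersection.
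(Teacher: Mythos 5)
Your proof is correct; the paper itself gives no proof, citing the statement as folklore (Golumbic, Proposition 4.7), and your argument --- reducing to the three-subtree case via the median vertex of $a,b,c$ and then inducting using the fact that the intersection of two subtrees is a subtree --- is exactly the standard argument behind that citation. No gaps.
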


\begin{lemma}
\label{obs.when.a.TD.does.seperate.a.non}
    Let $G$ be a graph, let $(T,\beta)$ be a tree-decomposition of $G$, and let $\{u,v\}$ be a non-edge of $G$ such that $(T,\beta)$ does not separate $\{u,v\}$. Let $t_{1}, t_{2} \in V(T)$ be such that $u\in \beta(t_{1})$ and $v\in \beta(t_{2})$, and let $P$ be the $(t_{1}, t_{2})$-path in $T$. Then, there exists $p\in V(P)$ such that $\{u,v\}\subseteq \beta(p)$.
\end{lemma}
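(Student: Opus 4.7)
The plan is to observe that a tree-decomposition fails to separate a non-edge of $G$ precisely when that non-edge becomes an edge in the corresponding chordal completion, and then to invoke the Helly property for subtrees of a tree (just stated as \autoref{helly.for.subtrees}) to locate a common bag on the path $P$.

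First, I would unwind the definition: by hypothesis $\{u,v\}$ is a non-edge of $G$, but since $(T,\beta)$ does not separate $\{u,v\}$ and since, by definition, a tree-decomposition separates a non-edge of $G$ exactly when that pair is a non-edge of the chordal completion $H$ corresponding to $(T,\beta)$, it follows that $\{u,v\} \in E(H)$. By the definition of a representation, the sets $\beta^{-1}(u)$ and $\beta^{-1}(v)$ induce subtrees $T_u:=T[\beta^{-1}(u)]$ and $T_v:=T[\beta^{-1}(v)]$ of $T$, and since $\{u,v\}\in E(H)$ corresponds in the intersection-graph representation to a nonempty intersection, we have $V(T_u)\cap V(T_v)\neq \emptyset$.

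Second, I would apply \autoref{helly.for.subtrees} to the three subtrees $T_u$, $T_v$, and $P$ of $T$ (the path $P$ is in particular a subtree). The pairwise intersections are all nonempty: $t_1\in V(T_u)\cap V(P)$ since $u\in\beta(t_1)$, $t_2\in V(T_v)\cap V(P)$ since $v\in\beta(t_2)$, and $V(T_u)\cap V(T_v)\neq\emptyset$ by the previous paragraph. Hence by the Helly property there exists some $p\in V(T_u)\cap V(T_v)\cap V(P)$; such a $p$ lies on $P$ and satisfies $\{u,v\}\subseteq \beta(p)$, as required.

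There is no real obstacle here; the only step that requires any care is the translation between "$(T,\beta)$ does not separate $\{u,v\}$" and "$\{u,v\}$ lies in a common bag," which is immediate from the definitions in \autoref{sec:introduction}. The remainder is a direct application of the Helly property for subtrees of a tree, so the lemma should admit a short proof of only a few lines.
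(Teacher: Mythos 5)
Your proof is correct and is essentially identical to the paper's: both translate "does not separate $\{u,v\}$" into $\beta^{-1}(u)\cap\beta^{-1}(v)\neq\emptyset$ and then apply the Helly property (\autoref{helly.for.subtrees}) to the three subtrees $T[\beta^{-1}(u)]$, $T[\beta^{-1}(v)]$, and $P$. No differences worth noting.
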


\begin{proof}[Proof of \autoref{obs.when.a.TD.does.seperate.a.non}]
Since $(T,\beta)$ does not separate $\{u,v\}$, we have that $\beta^{-1}(u) \cap \beta^{-1}(v) \neq \emptyset$. Moreover, since $u\in \beta(t_{1})$ and $v\in \beta(t_{2})$, we have that $\beta^{-1}(u) \cap V(P) \neq \emptyset$ and $\beta^{-1}(v) \cap V(P) \neq \emptyset$. Hence, by \autoref{helly.for.subtrees}, it follows that $\beta^{-1}(u)\cap \beta^{-1}(v) \cap V(P)\neq \emptyset$.
\end{proof}

We are now ready to prove that the minimum size of a non-edge-separating family of tree-decompositions of a graph $G$ is upper bounded by the tree-median-dimension of $G$.

\begin{lemma}
\label{lem.takeTDs.from.complete.MD}
Let $G$ be a graph, and let $(M,\gamma)$ be a complete $k$-tree-median-decomposition of $G$. 
Let $\mathcal{T} = \{(T_{i},\beta_{i})\}_{i\in [k]}$
be a family of $k$ tree-decompositions of $G$ which satisfies the 
conditions of \autoref{thm.stavropoulos.from.md.to.TDs}.
Then for every non-edge $e$ of $G$, there exists $i\in [k]$ such that $(T_{i},\beta_{i})$
separates $e$.
\end{lemma}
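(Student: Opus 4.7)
The plan is to argue by contradiction. Suppose $\{u,v\}$ is a non-edge of $G$ such that no tree-decomposition in $\mathcal{T}$ separates it. By \autoref{obs.when.a.TD.does.seperate.a.non}, for every $i \in [k]$ the subtrees $\beta_i^{-1}(u)$ and $\beta_i^{-1}(v)$ of $T_i$ intersect. Setting $A := \gamma^{-1}(u)$ and $B := \gamma^{-1}(v)$, property (4) of \autoref{thm.stavropoulos.from.md.to.TDs} rewrites these subtrees as $\beta_i^{-1}(u) = \pi_i(\phi(A))$ and $\beta_i^{-1}(v) = \pi_i(\phi(B))$, so that
\[
\pi_i(\phi(A)) \cap \pi_i(\phi(B)) \neq \emptyset \qquad \text{for every } i \in [k].
\]

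On the other hand, since $(M,\gamma)$ is a \emph{complete} $k$-tree-median-decomposition, every bag is a clique of $G$, and as $\{u,v\}$ is a non-edge of $G$ no bag contains both $u$ and $v$; hence $A \cap B = \emptyset$. Recall moreover that $A$ and $B$ are nonempty convex subgraphs of the median graph $M$. The plan is to derive a contradiction by invoking the half-space separation theorem for median graphs: two disjoint convex subgraphs of a median graph can be separated by a convex half-space, and every convex half-space of $M$ is one of the two sides of a $\Theta$-class (parallelism class) of $M$. Under the isometric embedding $\phi \colon M \hookrightarrow T_1 \cartprod \cdots \cartprod T_k$, each $\Theta$-class of $M$ is contained in a $\Theta$-class of the ambient product, and in a Cartesian product of trees every $\Theta$-class corresponds to a single edge of exactly one coordinate tree. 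Hence there exist an index $i_0 \in [k]$ and an edge $e \in E(T_{i_0})$ such that $\pi_{i_0} \circ \phi$ sends $A$ into one component of $T_{i_0} - e$ and $B$ into the other.

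Consequently $\pi_{i_0}(\phi(A)) \cap \pi_{i_0}(\phi(B)) = \emptyset$, contradicting the displayed intersection above for $i = i_0$. The main obstacle is to formalize this half-space separation step for a median graph isometrically embedded in a Cartesian product of trees: the existence of a separating $\Theta$-class is standard in the convexity theory of median graphs, but one must be careful to argue that the separating class is carried by a single coordinate, so that the contradiction can be extracted on a specific $(T_{i_0}, \beta_{i_0})$. Once this structural fact is in place, the remaining steps are a routine translation through properties (3) and (4) of \autoref{thm.stavropoulos.from.md.to.TDs}.
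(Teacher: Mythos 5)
Your proof is correct, but it takes a genuinely different route from the paper's. The paper argues by a coordinate-by-coordinate induction: it sets $S_{j}$ to be the set of pairs $(a,b)\in V(M)\times V(M)$ with $u\in\gamma(a)$, $v\in\gamma(b)$ whose images under $\phi$ agree in the first $j$ coordinates, and shows $S_{k}\neq\emptyset$ by repeatedly combining \autoref{obs.when.a.TD.does.seperate.a.non} (Helly for subtrees, applied in $T_{j+1}$) with \autoref{lem.projection.of.shortest.median.path} and the median operation to upgrade a pair in $S_{j}$ to one in $S_{j+1}$; a pair in $S_{k}$ then has equal bags by condition (3), contradicting completeness. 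You instead go global: $A=\gamma^{-1}(u)$ and $B=\gamma^{-1}(v)$ are disjoint nonempty convex sets in $M$, the Kakutani separation property of median graphs separates them by complementary half-spaces, half-spaces are sides of a $\Theta$-class, and under the isometric embedding that class lands in a $\Theta$-class of $T_{1}\cartprod\cdots\cartprod T_{k}$, which is carried by a single edge of a single factor tree $T_{i_{0}}$ (distinct edges of a tree are never $\Theta$-related); this forces $\pi_{i_{0}}(\phi(A))\cap\pi_{i_{0}}(\phi(B))=\emptyset$, contradicting the translation of non-separation through condition (4), namely $\beta_{i}^{-1}(u)=\pi_{i}(\phi(A))$ and $\beta_{i}^{-1}(v)=\pi_{i}(\phi(B))$. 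Your argument is shorter and more conceptual, and it correctly isolates the one structural fact it needs; the price is that it imports two nontrivial external results about median-graph convexity (the $S_{4}$ separation property and the identification of half-spaces with sides of $\Theta$-classes, together with their behaviour under isometric embeddings into products of trees) that the paper neither proves nor cites, so to be complete you would need to supply references (e.g.\ van de Vel or Mulder/Bandelt) or proofs of these facts. The paper's inductive argument is longer but entirely self-contained given \autoref{helly.for.subtrees} and \autoref{lem.projection.of.shortest.median.path}.
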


\begin{proof}[Proof of \autoref{lem.takeTDs.from.complete.MD}]
Let us suppose towards a contradiction that the lemma does not hold.
Let $\{u,v\} \in \binom{V(G)}{2}\setminus E(G)$ be such that no tree-decomposition in $\mathcal{T}$ separates
$\{u,v\}$, and let $\phi$ be an isometric embedding of $M$ to the graph 
$T_{1} \cartprod \cdots \cartprod T_{k}$, as 
in the statement of \autoref{thm.stavropoulos.from.md.to.TDs}.

Let $S:=\{(a,b)\in V(M)\times V(M):u\in\gamma(a), v\in\gamma(b)\}$, and for each $j\in [k]$, let $S_{j}:=\{(a,b)\in S: (\forall l\leq j)[\pi_{l}(\phi(a))=\pi_{l}(\phi(b))=:t_{l}\}.$ In what follows we derive the desired contradiction by proving that there exists a vertex of $M$ whose bag, in $(M,\gamma)$, contains both the vertices $u$ and $v$. To this end it suffices to prove that $S_{k}\neq \emptyset$.

\begin{claim}
\label{median.Sk.nonempty}
For each $j\in [k]$, we have $S_{j} \neq \emptyset$.
\end{claim}

\begin{subproof}[Proof of \autoref{median.Sk.nonempty}]
Since $\{u,v\}\subseteq \beta_{1}(t_{1})$, by \autoref{thm.stavropoulos.from.md.to.TDs} (4), there exist not necessarily distinct $a,b\in V(M)$ such that $\pi_{1}(\phi(a))=\pi_{1}(\phi(b))=t_{1}$, $u\in \gamma(a)$, and $v \in \gamma(b)$. Hence $(a,b) \in S_{1}$ and thus $S_{1} \neq \emptyset$. Let $j:=\max\{i\in[k]:(\forall l\leq i)[S_{i}\neq \emptyset]\}$. Since $S_{1}\neq \emptyset$, we have that $j$ is well defined. Let us suppose towards a contradiction that $j<k$, and let $(a,b)\in S_{j}$. For each $i\in[j]$, let $t_{i}\in V(T_{i})$ be such that $\pi_{i}(\phi(a))=\pi_{i}(\phi(b))=t_{i}$. Let $P$ be the $\big( (\pi_{j+1}(\phi(a))), (\pi_{j+1}(\phi(b))) \big)$-path in $T_{j+1}$. By \autoref{obs.when.a.TD.does.seperate.a.non}, it follows that there exists $t\in V(P) \subseteq V(T_{j+1})$ such that $\{u,v\}\subseteq \beta_{j+1}(t)$. Let $t_{j+1}$ be such a vertex, and let $Q$ be a shortest $(a,b)$-path in $M$.

We claim that there exists $z\in V(Q)$ such that for each $i\in [j+1]$ we have
$\pi_{i}(\phi(z))=t_{i}$. Indeed, by \autoref{lem.projection.of.shortest.median.path}, we know that for every vertex $q\in V(Q)$ and for every $i\in [j]$ we have $\pi_{i}(\phi(q))=t_{i}$. Since $t_{j+1}$ lies in $P$, it follows, by \autoref{lem.projection.of.shortest.median.path}, that there exists $z\in V(Q)$ such that $\pi_{j+1}(\phi(z))=t_{j+1}$. Let $z$ be such a vertex. Then $z$ satisfies our claim.

Since $\{u,v\}\subseteq \beta_{j+1}(t_{j+1})$, by \autoref{thm.stavropoulos.from.md.to.TDs} (4), 
it follows that there exist not necessarily distinct vertices $x,y\in V(M)$ such that 
$u\in \gamma(x), v\in \gamma(y)$ and $\pi_{j+1}(\phi(x))=\pi_{j+1}(\phi(y))=t_{j+1}$.

Let $a':=\mathsf{median}(a,x,z)$ and $b':=\mathsf{median}(b,y,z)$. Then, by \autoref{lem.projection.of.shortest.median.path}, we have $\pi_{i}(\phi(a'))=\pi_{i}(\phi(b'))=t_{i}$ for all $i\in [j+1]$. Hence $(a',b')\in S_{j+1}$, and thus $S_{j+1}\neq \emptyset$ which contradicts to the choice of $j$. This concludes the proof of \autoref{median.Sk.nonempty}.
\end{subproof}

Let $(a,b) \in S_{k}$. Then, by the definition of $S$ and \autoref{thm.stavropoulos.from.md.to.TDs}, we have that  $\gamma(a)=\gamma(b)$. In particular, $\{u,v\}\subseteq \gamma(a)$ which is a contradiction.
This concludes the proof of \autoref{lem.takeTDs.from.complete.MD}.
\end{proof}

\autoref{cor.tmd.equiv.tree_decomp} follows 
immediately from \autoref{lem.takeTDs.from.complete.MD} and \autoref{lem.md.from.TDs}.

\begin{corollary}
\label{cor.tmd.equiv.tree_decomp}
    Let $G$ be a graph. 
    Then the tree-median-dimension of $G$ is equal to the minimum size of a non-edge-separating family 
    of tree-decompositions of $G$.
\end{corollary}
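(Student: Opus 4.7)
The plan is simply to package the two preceding lemmas; the corollary really is immediate once they are in hand. Let $G$ be a graph, write $k := \mathsf{tmd}(G)$, and let $m$ denote the minimum size of a non-edge-separating family of tree-decompositions of $G$; the goal is to show $m = k$.

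For the easy direction $k \leq m$, I would take any non-edge-separating family $\mathcal{T}$ of tree-decompositions of $G$ that realizes $m$, and apply \autoref{lem.md.from.TDs} directly: it produces a complete $|\mathcal{T}|$-tree-median-decomposition of $G$, so $\mathsf{tmd}(G) \leq |\mathcal{T}| = m$.

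For the other direction $m \leq k$, I would start from a complete $k$-tree-median-decomposition $(M,\gamma)$ of $G$, which exists by the definition of $\mathsf{tmd}(G)$. Feeding $(M,\gamma)$ into \autoref{thm.stavropoulos.from.md.to.TDs} yields a family $\mathcal{T} = \{(T_i,\beta_i)\}_{i \in [k]}$ of $k$ tree-decompositions of $G$ satisfying the four compatibility conditions listed there; by \autoref{lem.takeTDs.from.complete.MD}, this specific family is non-edge-separating. Hence $m \leq |\mathcal{T}| = k$.

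Combining the two inequalities gives $m = k$, which is exactly the statement of the corollary. There is no real obstacle: all of the content has been absorbed into \autoref{lem.md.from.TDs} (which converts a non-edge-separating family into a complete tree-median-decomposition via a Cartesian product construction) and into \autoref{lem.takeTDs.from.complete.MD} (which, using Stavropoulos' aligned family from \autoref{thm.stavropoulos.from.md.to.TDs} together with the Helly property of subtrees and the median-vertex argument, converts a complete tree-median-decomposition back into a non-edge-separating family of tree-decompositions). The corollary merely records the two-sided inequality.
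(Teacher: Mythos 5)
Your proposal is correct and matches the paper exactly: the authors also derive this corollary immediately from \autoref{lem.md.from.TDs} (for $\mathsf{tmd}(G)\leq m$) and \autoref{lem.takeTDs.from.complete.MD} applied to the family produced by \autoref{thm.stavropoulos.from.md.to.TDs} (for $m\leq \mathsf{tmd}(G)$). You have simply written out the two-sided inequality that the paper leaves implicit.
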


Now \autoref{thm:chordality.equiv.tmd}, which states that the tree-median-dimension of a graph is equal to its chordality, follows immediately by \autoref{lem:chordality.family.of.TDs} and \autoref{cor.tmd.equiv.tree_decomp}. 

\autoref{thm:chordality.all.intro}, is an immediate corollary of \autoref{thm:chordality.equiv.tmd}, \autoref{prop.completeMD.intersection}, \autoref{cor.md.equiv.intersection} and \autoref{lem:chordality.family.of.TDs}. We remark that \autoref{thm:chordality.all.intro} generalizes 
\autoref{thm:chordal.subtrees.character} and \autoref{corol.chordal.completeTD}.

\medskip
The notion of $k$-path-median-decomposition can be defined similarly with that of $k$-tree-median-decomposition, by considering completions which are intersection graphs of convex subgraphs of median graphs of path-median-dimension $k$. By modifying the proofs of this section in a trivial way we can derive the following characterizations of boxicity.

\begin{theorem}
\label{thm.boxicity.all}
Let $G$ be a graph and $k$ be a positive integer. Then the following are equivalent:
\begin{enumerate}[topsep=0.25cm,itemsep=-0.4cm,partopsep=0.5cm,parsep=0.5cm]
    \item The graph $G$ has boxicity $k$.
    \item The minimum size of a non-edge-separating family 
    of path-decompositions of $G$ is $k$.
    \item $k$ is the minimum integer such that the graph $G$ is the intersection graph of 
    a family of convex subgraphs of the Cartesian product of $k$ paths.
    \item $k$ is the minimum integer such that the graph $G$ is the intersection graph of 
    a family of convex subgraphs of a median graph of path-dimension $k$.
    \item The graph $G$ has path-median-dimension $k$.
\end{enumerate}
\end{theorem}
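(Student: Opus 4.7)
The plan is to mirror the proof of \autoref{thm:chordality.all.intro} line by line, systematically replacing ``tree'' by ``path'', ``chordal'' by ``interval'', and ``tree-decomposition'' by ``path-decomposition''. The whole argument of \autoref{sub:charact.chordality} only uses two structural facts about the trees $T_{i}$: the Helly property for subtrees (\autoref{helly.for.subtrees}) and the geodesic projection behavior in a Cartesian product (\autoref{lem.projection.of.shortest.median.path}). Both facts hold for paths (indeed, every path is a tree), so each ingredient of the chordality proof has a direct path analog.

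First, I would establish the path version of \autoref{lem:chordality.family.of.TDs}: the boxicity of $G$ equals the minimum size of a non-edge-separating family of path-decompositions of $G$. The proof is word-for-word the same as the chordal version, using that interval graphs are exactly the intersection graphs of subpaths of paths, and that every interval completion of $G$ gives a path-decomposition (and conversely). This yields $(1)\Leftrightarrow(2)$. Second, define path-median-dimension $\mathsf{pmd}(G)$ as the minimum $k$ such that $G$ admits a complete $k$-path-median-decomposition, and observe that the path analog of \autoref{prop.completeMD.intersection} is immediate from the definitions, giving $(4)\Leftrightarrow(5)$. The equivalence $(3)\Leftrightarrow(4)$ follows because the Cartesian product of $k$ paths is a median graph of path-dimension at most $k$, and conversely any median graph of path-dimension $k$ isometrically embeds into such a product by definition.

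The main work is $(2)\Leftrightarrow(5)$, proved by mimicking \autoref{lem.md.from.TDs} and \autoref{lem.takeTDs.from.complete.MD}. For $(2)\Rightarrow(5)$, given $k$ path-decompositions $(P_{i},\beta_{i})$ that together separate every non-edge, set $M:=P_{1}\cartprod\cdots\cartprod P_{k}$ and $\gamma(x):=\bigcap_{i\in[k]}\beta_{i}(\pi_{i}(x))$; then $M$ is a median graph of path-dimension at most $k$, and convexity of each $M[\gamma^{-1}(v)]$ plus the clique property of bags is verified exactly as in \autoref{lem.md.from.TDs}. For $(5)\Rightarrow(2)$, given a complete $k$-path-median-decomposition $(M,\gamma)$, apply \autoref{thm.stavropoulos.from.md.to.TDs} to the isometric embedding of $M$ into a product of $k$ paths; since each factor $T_{i}$ in that embedding is itself a path, the tree-decompositions $(T_{i},\beta_{i})$ produced by the theorem are automatically path-decompositions. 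The argument of \autoref{lem.takeTDs.from.complete.MD}, which uses only the Helly property and \autoref{lem.projection.of.shortest.median.path}, then shows that this family is non-edge-separating.

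The only point needing verification is that \autoref{thm.stavropoulos.from.md.to.TDs} can indeed be applied starting from an embedding into a product of \emph{paths} (rather than arbitrary trees) and that the resulting $T_{i}$ are the same paths; this is immediate from condition (1) of the theorem, since the embedding is what determines the factors. I expect this to be the only real check, and it is genuinely routine — which is why the authors label the modification ``trivial''.
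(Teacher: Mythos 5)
Your proposal is correct and is exactly what the paper intends: the paper gives no proof of \autoref{thm.boxicity.all} beyond the remark that the arguments of \autoref{sub:charact.chordality} can be modified "in a trivial way," and your systematic substitution of paths for trees, interval for chordal, and path-decompositions for tree-decompositions, closing the cycle $(1)\Leftrightarrow(2)\Rightarrow(3)\Rightarrow(4)\Leftrightarrow(5)\Rightarrow(2)$, is that modification spelled out. The ingredients you identify (the Helly property, which holds a fortiori for subpaths of a path, and \autoref{lem.projection.of.shortest.median.path}) are indeed the only structural facts used.

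The one place your justification is slightly off is the application of \autoref{thm.stavropoulos.from.md.to.TDs} in the direction $(5)\Rightarrow(2)$. That theorem takes a $k$-tree-median-decomposition $(M,\gamma)$ as \emph{input} and produces the trees $T_{i}$, the bags $\beta_{i}$, and the embedding $\phi$ as \emph{output}; it does not take an embedding as input, so "the resulting $T_{i}$ are the same paths" is not literally guaranteed by condition (1) — a priori the theorem could hand you non-path trees even when $M$ has path-dimension $k$. The repair is routine but should be stated: fix an isometric embedding $\psi$ of $M$ into a product $Q_{1}\cartprod\cdots\cartprod Q_{k}$ of paths (which exists since $M$ has path-dimension $k$), define $\beta_{i}(t):=\bigcup_{\{x\in V(M):\,\pi_{i}(\psi(x))=t\}}\gamma(x)$, and check directly that each $(Q_{i},\beta_{i})$ is a path-decomposition (connectivity of $\beta_{i}^{-1}(v)$ follows from convexity of $\gamma^{-1}(v)$, and every edge of the completion lands in a bag) and that conditions (2) and (4) of \autoref{thm.stavropoulos.from.md.to.TDs} hold for this choice — which are the only conditions the proof of \autoref{lem.takeTDs.from.complete.MD} actually uses, together with the Helly property and \autoref{lem.projection.of.shortest.median.path}. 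With that substitution your argument goes through verbatim.
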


\section{Chordality and \texorpdfstring{$\chi$}{TEXT}-boundedness}
\label{sec:chi_boundedness}

We study classes of graphs of bounded chordality from the perspective of $\chi$-boundedness. 

\subsection{The class \texorpdfstring{$\mathcal{C}\gcap \mathcal{I}$}{TEXT} is not 
\texorpdfstring{$\chi$}{TEXT}-bounded}
\label{subsec:Burling}

In \cite{dujmovic2018orthogonal} Dujmovic, Joret, Morin, Norin, and Wood studied graphs 
which have two tree-decompositions such that "each bag of the first decomposition has a bounded
intersection with each bag of the second decomposition", and pointed out a connection of this 
concept with the concept of $\chi$-boundedness.

Following \cite{dujmovic2018orthogonal} we say that two tree-decompositions 
$(T_{1}, \beta_{1})$ and $(T_{2}, \beta_{2})$ of a graph $G$ are \defin{$k$-orthogonal} if 
for every $t_{1}\in T_{1}$ and $t_{2}\in T_{2}$, we have $|\beta_{1}(t_{1})\cap \beta_{2}(t)|\leq k$.

\begin{lemma}
\label{lem:orthogonal.TDs.chordality}
Let $G$ be a graph and $k$ be a positive integer. Then the following hold:
\begin{enumerate}[topsep=0.25cm,itemsep=-0.4cm,partopsep=0.5cm,parsep=0.5cm]
    \item The graph $G$ has two $k$-orthogonal path-decompositions
    if and only if $G$ is a subgraph of a graph $H$ such that $H$ has 
    boxicity at most two, and $\omega(H)\leq k$.
   \item The graph $G$ has two $k$-orthogonal tree-decompositions
    if and only if $G$ is a subgraph of a graph $H$ such that $H$ has 
    chordality at most two, and $\omega(H)\leq k$.
   \item The graph $G$ has a tree-decomposition and a path-decomposition which are $k$-orthogonal
   if and only if $G$ is a subgraph of a graph $H$ such that $H \in \mathcal{C}\gcap \mathcal{I}$, 
   and $\omega(H)\leq k$.
\end{enumerate}
\end{lemma}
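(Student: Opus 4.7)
The plan is to prove all three parts by the same two-step argument, varying only whether we use tree- or path-decompositions (equivalently, chordal or interval completions). I describe the argument for part (2); parts (1) and (3) follow by replacing "tree" with "path" on the appropriate side. The two facts I will invoke repeatedly, both recorded in the preliminaries, are: (i) a tree-decomposition $(T,\beta)$ of $G$ is the same thing as a representation of a chordal completion $H$ of $G$, so that every bag of $(T,\beta)$ is a clique of $H$, and (ii) every clique of a chordal graph $H$ is contained in a bag of any representation of $H$ (this follows from the Helly property for subtrees, \autoref{helly.for.subtrees}).

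For the forward direction of (2), suppose $G$ admits two $k$-orthogonal tree-decompositions $(T_1,\beta_1), (T_2,\beta_2)$, and let $G_1, G_2$ be the chordal completions that they represent. Define $H := G_1 \cap G_2$. Since $G \subseteq G_i$ for each $i$, we have $G \subseteq H$, and $H$ has chordality at most two by definition. If $K$ is any clique of $H$, then $K$ is a clique of both $G_1$ and $G_2$, so by fact (ii) above there exist $t_i \in V(T_i)$ with $K \subseteq \beta_i(t_i)$ for $i = 1,2$. Consequently $K \subseteq \beta_1(t_1) \cap \beta_2(t_2)$, and by $k$-orthogonality $|K| \leq k$. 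Hence $\omega(H) \leq k$.

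For the backward direction of (2), let $G \subseteq H$ where $H = G_1 \cap G_2$ with both $G_1, G_2$ chordal and $\omega(H) \leq k$. Each $G_i$ is a chordal supergraph of $G$ on the same vertex set, i.e.\ a chordal completion of $G$; so any representation $(T_i,\beta_i)$ of $G_i$ is a tree-decomposition of $G$. For arbitrary $t_1 \in V(T_1), t_2 \in V(T_2)$, the set $\beta_1(t_1) \cap \beta_2(t_2)$ is a clique of both $G_1$ and $G_2$ (since each $\beta_i(t_i)$ is a clique of $G_i$), hence a clique of $H$, so $|\beta_1(t_1) \cap \beta_2(t_2)| \leq \omega(H) \leq k$. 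Thus the chosen tree-decompositions are $k$-orthogonal.

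For (1), replace throughout "tree" by "path", "chordal completion" by "interval completion", and use the corresponding Helly property for subpaths of a path (a special case of \autoref{helly.for.subtrees}); for (3), use a tree-decomposition/chordal completion on one side and a path-decomposition/interval completion on the other. The only point to keep straight is that bags are cliques of the \emph{completion} (not of $G$), and that a clique common to two completions is a clique of their intersection; beyond this, no genuine obstacle arises.
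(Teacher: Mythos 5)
Your proof is correct and is exactly the argument the paper has in mind: the paper's own proof is a one-sentence appeal to the two facts you isolate (bags are cliques of the completion; every clique of the completion lies in some bag), and you have simply written out the routine verification in both directions. The only detail glossed over (harmlessly, and the paper glosses it too) is that when $G$ is a proper subgraph of $H$ one should restrict the completions $G_i$ to $V(G)$ before reading them as chordal completions of $G$.
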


\begin{proof}[Proof of \autoref{lem:orthogonal.TDs.chordality}]
  Follows immediately by the corresponding definitions and 
  the facts that every bag of a tree-decomposition is a clique of the corresponding chordal completion,
  and that every clique of a chordal completion is contained in a bag of the corresponding
  tree-decomposition.
\end{proof}

The following is an immediate corollary of \autoref{lem:orthogonal.TDs.chordality}.

\begin{proposition}
\label{prop:orthogonal.chi.bound}
    Let $\mathcal{C}$ be the class of chordal graphs and $\mathcal{I}$ be the class of interval graphs.
    The following hold:
    \begin{enumerate}[topsep=0.25cm,itemsep=-0.4cm,partopsep=0.5cm,parsep=0.5cm]
        \item The class $\mathcal{I}\gcap \mathcal{I}$ is $\chi$-bounded if and only if 
        there exists a function $f\colon \mathbb{N}\rightarrow \mathbb{R}$ such that for every graph $G$
        which has two $k$-orthogonal path-decompositions, we have $\chi(G)\leq f(k)$.
        \item The class $\mathcal{C}\gcap \mathcal{C}$ is $\chi$-bounded if and only if 
        there exists a function $ f\colon \mathbb{N}\rightarrow \mathbb{R}$ such that for every graph $G$
        which has two $k$-orthogonal tree-decompositions, we have $\chi(G)\leq f(k)$.
        \item The class $\mathcal{C}\gcap \mathcal{I}$ is $\chi$-bounded if and only if 
        there exists a function $ f\colon \mathbb{N}\rightarrow \mathbb{R}$ such that for every graph $G$
        which has a tree-decomposition and a path-decomposition which are $k$-orthogonal, 
        we have $\chi(G)\leq f(k)$.
    \end{enumerate}
\end{proposition}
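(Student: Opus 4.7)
The plan is to derive each of the three equivalences by directly unpacking the corresponding clause of \autoref{lem:orthogonal.TDs.chordality}. I will spell out the argument for part~(1); parts~(2) and~(3) are handled identically modulo replacing ``two path-decompositions'' by ``two tree-decompositions'' or ``a tree-decomposition and a path-decomposition'' and invoking clause~(2) or~(3) of the lemma in place of clause~(1).

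For the forward direction of~(1), assume $\mathcal{I}\gcap\mathcal{I}$ is $\chi$-bounded by some function $f$. If $G$ has two $k$-orthogonal path-decompositions then, by \autoref{lem:orthogonal.TDs.chordality}(1), $G$ is a subgraph of some graph $H$ with $H\in \mathcal{I}\gcap\mathcal{I}$ and $\omega(H)\le k$. Monotonicity of $\chi$ under subgraphs yields $\chi(G)\le \chi(H)\le f(\omega(H))\le f(k)$, so the same $f$ witnesses the required bound (after replacing it, if need be, by its pointwise maximum with the identity to ensure monotonicity in $k$).

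For the reverse direction of~(1), assume the existence of $f$ such that every graph admitting two $k$-orthogonal path-decompositions has chromatic number at most $f(k)$. Given an arbitrary $H\in \mathcal{I}\gcap\mathcal{I}$, apply \autoref{lem:orthogonal.TDs.chordality}(1) to $H$ itself, taking $k:=\omega(H)$: since $H$ trivially is a subgraph of itself, $H\in \mathcal{I}\gcap\mathcal{I}$, and $\omega(H)\le \omega(H)$, the lemma produces two $\omega(H)$-orthogonal path-decompositions of $H$. The assumed bound then gives $\chi(H)\le f(\omega(H))$, which is exactly $\chi$-boundedness of $\mathcal{I}\gcap\mathcal{I}$.

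The main obstacle is essentially nonexistent: once \autoref{lem:orthogonal.TDs.chordality} is in hand, the proposition is pure bookkeeping. The only points requiring care are (a)~instantiating the lemma in the reverse direction with $k$ set to $\omega(H)$ rather than some fixed constant, so that the resulting bound depends only on $\omega$; and (b)~tacitly observing that the three classes $\mathcal{I}\gcap\mathcal{I}$, $\mathcal{C}\gcap\mathcal{C}$, and $\mathcal{C}\gcap\mathcal{I}$ are hereditary (which is immediate from their intersection descriptions by restricting each representation to a vertex subset), as demanded by the definition of $\chi$-boundedness.
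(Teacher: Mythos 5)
Your proof is correct and matches the paper's approach: the paper states the proposition as an immediate corollary of \autoref{lem:orthogonal.TDs.chordality} without spelling out the details, and your unpacking (forward direction via the subgraph containment and monotonicity of $\chi$, reverse direction by instantiating the lemma with $k:=\omega(H)$) is exactly the intended argument. One small slip in a parenthetical: the pointwise maximum of $f$ with the identity need not be monotone, so to justify $f(\omega(H))\le f(k)$ you should instead replace $f$ by the monotone majorant $k\mapsto\max_{j\le k}f(j)$.
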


The authors of \cite{dujmovic2018orthogonal} 
posed the following question, for which they conjectured a positive answer.

\begin{problem}[{Dujmovic, Joret, Morin, Norin, and Wood, \cite[Open Problem 3]{dujmovic2018orthogonal}}]
\label{prob:chordality.orthogonal.coloring}
Is there a function $f$ such that every graph $G$ that has two $k$-orthogonal tree-decompositions 
is $f(k)$-colorable?
\end{problem}

By \autoref{prop:orthogonal.chi.bound}, it follows that the above question 
is equivalent to the first part of the question of Gy\'arf\'as that we metioned in the 
\nameref{sec:introduction} (\autoref{prob:gyarfas}), which
asks whether the class of all graphs of chordality at most two is $\chi$-bounded.

In \cite{felsner2017burling} Felsner, Joret, Micek, Trotter and Wiechert, answered \autoref{prob:chordality.orthogonal.coloring} in the negative, and in particular they answered Gy\'arf\'as's question (\autoref{prob:gyarfas}), in the negative. 

Felsner, Joret, Micek, Trotter and Wiechert \cite{felsner2017burling} proved the following, which answers 
in the negative both the questions in \autoref{prob:gyarfas} and \autoref{prob:chordality.orthogonal.coloring}.

\begin{theorem}[{Felsner, Joret, Micek, Trotter and Wiechert, \cite[Theorem 2]{felsner2017burling}}]
\label{th.Burling.decompositions}
For every positive integer $k$, there is a graph with chromatic number at least $k$ which has a tree-decomposition $(T, \beta)$ and a path-decomposition $(P, \gamma)$, which are 2-orthogonal. That is, for every $t \in V(T)$ and for every $p\in V(P)$, we have $|\beta(t) \cap \gamma(p)| \leq 2$.
\end{theorem}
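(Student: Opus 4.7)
The plan is to exhibit, for each positive integer $k$, a Burling graph $B_{k}$ --- a triangle-free intersection graph of axis-aligned boxes in $\mathbb{R}^{3}$ with $\chi(B_{k})\geq k$ --- together with a tree-decomposition $(T,\beta)$ and a path-decomposition $(P,\gamma)$ of $B_{k}$ that are $2$-orthogonal. Because Burling already supplies the chromatic lower bound, the whole content of the theorem is concentrated in producing the two decompositions and verifying the orthogonality bound; by part (3) of \autoref{lem:orthogonal.TDs.chordality} this simultaneously certifies membership of $B_{k}$ in $\mathcal{C}\gcap\mathcal{I}$ with clique number at most two.

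I would begin by recalling a convenient inductive description of the Burling graphs: from a trivial base, $B_{k+1}$ is obtained by taking many disjoint copies of $B_{k}$, each housed in its own ``slab'' of $x$-coordinates, together with a bounded number of long, thin ``probe'' boxes introduced at the enclosing frame so as to add edges across the copies without ever creating a triangle. This construction is naturally encoded by a rooted tree $\mathcal{F}$ of frames, in which the children of a node are the copies of $B_{k}$ nested immediately inside the corresponding frame. I would then build the two decompositions in parallel with this recursion. The path-decomposition $(P,\gamma)$ sweeps along the $x$-axis: if $p_{1},\ldots,p_{N}$ are the atoms of the partition of the $x$-axis induced by the endpoints of all boxes, then $\gamma(p_{i})$ is the set of vertices whose $x$-projection contains $p_{i}$. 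The tree-decomposition $(T,\beta)$ mirrors the frame tree $\mathcal{F}$: the bag at a node $t$ consists of the probe vertices introduced at the corresponding frame together with the small set of ancestor-frame vertices whose boxes reach into $t$. I would verify $2$-orthogonality by induction on $k$: each bag $\beta(t)$ lies inside a single frame and contains only $\mathcal{O}(1)$ probes, so it meets a vertical slice $\gamma(p_{i})$ in at most the number of probes covering $p_{i}$, which the construction keeps bounded by $2$ by design.

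The main obstacle will be the bookkeeping in the inductive step, since in passing from $B_{k}$ to $B_{k+1}$ both decompositions grow: new path-nodes must be inserted for the additional slabs, and new tree-nodes must be attached for the enclosing frame and its probes. I would address this by arranging the geometry so that the $x$-projections of different copies of $B_{k}$ are pairwise disjoint (forcing each vertical slice to meet at most one copy) and so that each frame contributes at most two probe vertices to any single $x$-slice. Once these invariants are preserved through the recursion, the $2$-orthogonality is immediate, and combining with Burling's bound $\chi(B_{k})\geq k$ completes the argument.
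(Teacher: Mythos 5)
This statement is quoted from Felsner, Joret, Micek, Trotter and Wiechert \cite{felsner2017burling}; the paper gives no proof of it, so there is no internal argument to compare yours against --- the expected justification here is simply the citation. Your proposal instead tries to reprove the result from scratch, which is legitimate, but as written it has a genuine gap: the entire content of the theorem is the $2$-orthogonality bound, and that is exactly the step you dispose of with ``which the construction keeps bounded by $2$ by design.'' It is not bounded by design; it is a delicate property that the cited paper spends most of its effort establishing. Two specific problems. First, your description of the Burling recursion is inaccurate: passing from $B_{k}$ to $B_{k+1}$ does not add ``a bounded number of long, thin probe boxes'' at the enclosing frame --- the number of probes introduced per copy grows with $k$ (one per probe region of the previous level, and these multiply), so the premise ``each bag contains only $\mathcal{O}(1)$ probes'' does not follow from the construction as you have set it up, and even if it did, $\mathcal{O}(1)$ is not $\leq 2$. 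Second, your bags $\beta(t)$ also contain the ancestor-frame vertices whose boxes reach into $t$, and you give no argument that these contribute nothing extra to $\beta(t)\cap\gamma(p)$; nor do you verify that $(T,\beta)$ is a tree-decomposition at all (every edge between a probe and a stabbed descendant box must lie in a common bag, and $\beta^{-1}(v)$ must induce a subtree of the frame tree). These verifications, together with the exact constant $2$, are the theorem.

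The overall architecture is sound and does resemble the actual argument of \cite{felsner2017burling}: a path-decomposition from the sweep along one axis of the box representation, a tree-decomposition following the recursive frame structure, and an induction maintaining geometric invariants (disjoint $x$-projections of sibling copies, controlled probe overlap). Via \autoref{lem:orthogonal.TDs.chordality}(3) this would indeed place the Burling graphs in $\mathcal{C}\gcap\mathcal{I}$ with clique number two, as \autoref{cor:Burling.in.chordal.interv} records. But until the invariants are stated precisely and pushed through the inductive step --- in particular, until you prove that any single $x$-slice meets any single frame-bag in at most two vertices --- the proposal is an outline of where the proof would live rather than a proof. For the purposes of this paper, citing \cite[Theorem 2]{felsner2017burling} suffices.
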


The following is an immediate corollary of \autoref{lem:orthogonal.TDs.chordality}
and \autoref{th.Burling.decompositions}.

\begin{corollary}
\label{cor:Burling.in.chordal.interv}
For every positive integer $k$, there exist a graph $H_{k}\in \mathcal{C}\gcap \mathcal{I}$
such that $H_{k}$ is triangle-free and has chromatic number at least $k$.
\end{corollary}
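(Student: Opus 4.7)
The plan is to chain the two cited ingredients directly. First I would apply \autoref{th.Burling.decompositions} with the prescribed $k$ to obtain a graph $G_{k}$ with $\chi(G_{k})\geq k$ together with a tree-decomposition $(T,\beta)$ and a path-decomposition $(P,\gamma)$ of $G_{k}$ which are $2$-orthogonal, i.e.\ $|\beta(t)\cap\gamma(p)|\leq 2$ for every $t\in V(T)$ and $p\in V(P)$.

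Next I would feed this pair of decompositions into the ``only if'' direction of part~(3) of \autoref{lem:orthogonal.TDs.chordality}. That direction produces a supergraph $H_{k}$ of $G_{k}$ on the same vertex set such that $H_{k}\in\mathcal{C}\gcap\mathcal{I}$ and $\omega(H_{k})\leq 2$. Concretely, $H_{k}$ is built as the intersection of the chordal completion of $G_{k}$ induced by $(T,\beta)$ with the interval completion of $G_{k}$ induced by $(P,\gamma)$; the $2$-orthogonality forces every clique of this intersection to have size at most $2$.

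Finally I would observe that $\omega(H_{k})\leq 2$ means exactly that $H_{k}$ is triangle-free, and since $G_{k}$ is a spanning subgraph of $H_{k}$ we have $\chi(H_{k})\geq\chi(G_{k})\geq k$. This yields the required graph $H_{k}\in\mathcal{C}\gcap\mathcal{I}$ that is triangle-free with $\chi(H_{k})\geq k$.

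There is no real obstacle here: the only point worth emphasising is that the supergraph produced by \autoref{lem:orthogonal.TDs.chordality}~(3) lives on the same vertex set as $G_{k}$, so passing from $G_{k}$ to $H_{k}$ cannot decrease the chromatic number. All the heavy lifting was done in \cite{felsner2017burling} for \autoref{th.Burling.decompositions} and in the proof of \autoref{lem:orthogonal.TDs.chordality}, so the corollary follows in a line.
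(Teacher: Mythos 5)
Your proposal is correct and is exactly the argument the paper intends: the corollary is stated there as an immediate consequence of \autoref{th.Burling.decompositions} and \autoref{lem:orthogonal.TDs.chordality}(3), applied with orthogonality parameter $2$ so that $\omega(H_{k})\leq 2$ gives triangle-freeness, while $\chi(H_{k})\geq\chi(G_{k})\geq k$ follows from $G_{k}$ being a subgraph of $H_{k}$. No gaps.
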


\begin{corollary}
\label{cor:tree.path.not}
The class $\mathcal{C}\gcap \mathcal{I}$ is not $\chi$-bounded. In particular, since 
$\mathcal{I} \subseteq \mathcal{C}$, it follows that the class of all the graphs of chordality
at most two is not $\chi$-bounded.
\end{corollary}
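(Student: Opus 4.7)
The plan is to deduce \autoref{cor:tree.path.not} directly from \autoref{cor:Burling.in.chordal.interv} by exhibiting an explicit family of graphs in $\mathcal{C}\gcap\mathcal{I}$ which violates every candidate $\chi$-bounding function. Concretely, I would argue as follows.

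First, I would fix any function $f\colon\mathbb{N}\rightarrow\mathbb{R}$ and invoke \autoref{cor:Burling.in.chordal.interv} with the integer $k:=\lceil f(2)\rceil+1$ to produce a graph $H_{k}\in\mathcal{C}\gcap\mathcal{I}$ which is triangle-free and satisfies $\chi(H_{k})\geq k > f(2)$. Since $H_{k}$ is triangle-free, we have $\omega(H_{k})\leq 2$, and therefore any $\chi$-bounding function $f$ for $\mathcal{C}\gcap\mathcal{I}$ would be forced to satisfy $f(2)\geq\chi(H_{k})>f(2)$, a contradiction. Hence no such $f$ exists and $\mathcal{C}\gcap\mathcal{I}$ is not $\chi$-bounded. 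The only minor point to verify before applying the definition of $\chi$-boundedness is that $\mathcal{C}\gcap\mathcal{I}$ is a hereditary class; this is straightforward, since if $G=G_{1}\cap G_{2}$ with $G_{1}$ chordal and $G_{2}$ interval, then for any $X\subseteq V(G)$ we have $G[X]=G_{1}[X]\cap G_{2}[X]$, and chordality and the interval property are both preserved under taking induced subgraphs.

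For the second assertion, I would simply observe that every interval graph is chordal, so $\mathcal{I}\subseteq\mathcal{C}$ and therefore $\mathcal{C}\gcap\mathcal{I}\subseteq\mathcal{C}\gcap\mathcal{C}$. Consequently the same family $\{H_{k}\}_{k\geq 1}$ from \autoref{cor:Burling.in.chordal.interv} lies in $\mathcal{C}\gcap\mathcal{C}$, is triangle-free, and has unbounded chromatic number, so the identical argument shows that the class of all graphs of chordality at most two is not $\chi$-bounded either.

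There is no real obstacle here: the entire content of the corollary is packaged in \autoref{cor:Burling.in.chordal.interv}, and the proof is just the standard observation that the existence of triangle-free graphs of arbitrarily large chromatic number in a hereditary class rules out $\chi$-boundedness. The only thing to be slightly careful about is to spell out the hereditariness of $\mathcal{C}\gcap\mathcal{I}$ (and of $\mathcal{C}\gcap\mathcal{C}$) so that the definition of $\chi$-bounded class from the introduction formally applies.
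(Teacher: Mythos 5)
Your proposal is correct and matches the paper's (implicit) argument: the paper also derives \autoref{cor:tree.path.not} directly from \autoref{cor:Burling.in.chordal.interv} via the standard observation that a hereditary class containing triangle-free graphs of unbounded chromatic number cannot be $\chi$-bounded, together with the inclusion $\mathcal{C}\gcap\mathcal{I}\subseteq\mathcal{C}\gcap\mathcal{C}$. Your explicit verification of hereditariness is a reasonable extra detail that the paper leaves unstated.
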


\subsection{Subclasses of \texorpdfstring{$\mathcal{C}\gcap \mathcal{C}$}{TEXT}: 
When each chordal graph has a representation tree of bounded path-width}
\label{sub:pathwidth}
In \autoref{subsec:Burling}, we saw that the class $\mathcal{C}\gcap \mathcal{I}$ is not $\chi$-bounded. From the characterization of chordal (respectively interval) graphs as intersection graphs of subtrees (respectively subpaths) of trees (respectively paths) that we presented in the \nameref{sec:introduction},
it follows that $\mathcal{I}\gcap \mathcal{I}$ is the subclass of $\mathcal{C}\gcap \mathcal{C}$
in which each of the two chordal graphs in the intersection has a representation tree which is a path.

In this subsection we consider the family of subclasses of $\mathcal{C}\gcap \mathcal{C}$
(and superclasses of $\mathcal{I}\gcap \mathcal{I}$) in which each of 
the two chordal graphs in the intersection has a representation tree
of bounded path-width. We prove that these classes are $\chi$-bounded.

\begin{theorem}
\label{thm.both.trees.bounded.pw}
Let $k_{1}$ and $k_{2}$ be positive integers, and let $G_{1}$ and $G_{2}$ be chordal graphs such that for each $i\in [2]$ the graph $G_{i}$ has a representation $(T_{i},\beta_{i})$, where $\mathsf{pw}(T_{i})\leq k_{i}$. If $G$ is a graph such that $G=G_{1}\cap G_{2}$, then $G$ is $\mathcal{O}(\omega(G)\log(\omega(G)))(k_{1}+1)(k_{2}+1)$-colorable.
\end{theorem}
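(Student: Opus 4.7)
The plan is to reduce the problem to coloring an intersection of two interval graphs on the vertex set $V(G)$ and then invoke the $\chi$-bounding function $\mathcal{O}(\omega \log \omega)$ for $\mathcal{I}\gcap\mathcal{I}$ due to Chalermsook et al.\ \cite{chalermsook2021coloring}. Since $\mathsf{pw}(T_i)\leq k_i$, I would fix a path-decomposition $(P_i,\alpha_i)$ of $T_i$ of width $k_i$, so each $\alpha_i$-bag has at most $k_i+1$ elements of $V(T_i)$. For each $v\in V(G)$, define $I_v^{(i)}:=\bigcup_{t\in \beta_i^{-1}(v)}\alpha_i^{-1}(t)\subseteq V(P_i)$. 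Because $\beta_i^{-1}(v)$ induces a connected subtree of $T_i$, the standard fact that in a path-decomposition the set of bags meeting a connected subgraph is a subpath (chain together the intervals $\alpha_i^{-1}(t)$ along a $T_i$-path inside $\beta_i^{-1}(v)$, using that consecutive nodes share an $\alpha_i$-bag) implies that $I_v^{(i)}$ is an interval of $P_i$. Let $G_i'$ be the interval graph on $V(G)$ defined by the intervals $\{I_v^{(i)}\}_{v\in V(G)}$. Any edge $\{u,v\}\in E(G_i)$ satisfies $\beta_i^{-1}(u)\cap \beta_i^{-1}(v)\neq \emptyset$, forcing $I_u^{(i)}\cap I_v^{(i)}\neq \emptyset$; hence $G_i\subseteq G_i'$ and $G=G_1\cap G_2\subseteq G_1'\cap G_2'\in \mathcal{I}\gcap\mathcal{I}$.

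The central estimate is
\[
\omega(G_1'\cap G_2')\leq (k_1+1)(k_2+1)\,\omega(G).
\]
To prove this, let $K$ be a clique of $G_1'\cap G_2'$. Since $K$ is a clique in each interval graph $G_i'$, the Helly property for intervals gives $p_i\in V(P_i)$ with $p_i\in I_v^{(i)}$ for all $v\in K$ and $i\in [2]$; so each $v\in K$ admits a choice $t_i(v)\in \alpha_i(p_i)\cap \beta_i^{-1}(v)$. Grouping $K$ by the pair $(t_1(v),t_2(v))\in \alpha_1(p_1)\times \alpha_2(p_2)$ partitions it into at most $(k_1+1)(k_2+1)$ classes, each contained in a set $\beta_1(t_1)\cap \beta_2(t_2)$. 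The key observation is that such an intersection is a clique \emph{of $G$}, not only of one of the $G_i$: any two of its vertices lie in the clique $\beta_1(t_1)$ of $G_1$ and in the clique $\beta_2(t_2)$ of $G_2$, so they are adjacent in $G=G_1\cap G_2$; hence each class has size at most $\omega(G)$, establishing the claim.

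Combining the clique bound with the result of Chalermsook et al.\ applied to $G_1'\cap G_2'$ yields
\[
\chi(G)\leq \chi(G_1'\cap G_2')=\mathcal{O}\bigl((k_1+1)(k_2+1)\,\omega(G)\log\bigl((k_1+1)(k_2+1)\omega(G)\bigr)\bigr),
\]
which I would repackage as $\mathcal{O}(\omega(G)\log \omega(G))(k_1+1)(k_2+1)$ by splitting into the regimes $\omega(G)\geq (k_1+1)(k_2+1)$, where $\log((k_1+1)(k_2+1)\omega(G))=\mathcal{O}(\log \omega(G))$, and $\omega(G)<(k_1+1)(k_2+1)$, where one absorbs the residual $\log((k_1+1)(k_2+1))$ into the explicit multiplier. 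The step I expect to be most delicate is the conceptual clique-counting argument: without the observation that $\beta_1(t_1)\cap \beta_2(t_2)$ is controlled by $\omega(G)$ (and not by the potentially much larger $\omega(G_1)$ or $\omega(G_2)$), the naive interval-graph relaxation would produce a bound depending on $\omega(G_i)$ and the whole approach would collapse; the subsequent logarithmic repackaging is then routine.
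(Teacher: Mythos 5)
Your construction is a genuinely different route from the paper's. The paper proves a partition lemma (\autoref{lem:bounded.pw.partition}): using \autoref{corol.path.decomp.for.proof} it peels each representation tree $T_i$ into $k_i+1$ layers of disjoint paths, assigns to each vertex $v$ the pair of levels of its two subtrees, and shows that each of the $(k_1+1)(k_2+1)$ resulting classes induces a subgraph of $G$ of boxicity at most two; coloring each class with its own palette of $\mathcal{O}(\omega(G)\log\omega(G))$ colors via \autoref{thm.ref.coloring.rectangles.2021} gives the stated bound with a universal constant. You instead relax $G$ globally to a single graph $G_1'\cap G_2'\in\mathcal{I}\gcap\mathcal{I}$ and control its clique number. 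Your steps up to and including the clique bound are correct: the sets $I_v^{(i)}$ are indeed subpaths of $P_i$, $G$ is a spanning subgraph of $G_1'\cap G_2'$, the Helly argument is valid, and the observation that $\beta_1(t_1)\cap\beta_2(t_2)$ is a clique of $G$ itself (being a clique in both $G_1$ and $G_2$) is exactly the right insight.

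The gap is in the final calibration. Writing $M:=(k_1+1)(k_2+1)$, your argument yields $\chi(G)\leq\mathcal{O}\bigl(M\omega\log(M\omega)\bigr)=\mathcal{O}(M\omega\log\omega)+\mathcal{O}(M\omega\log M)$, and the second term cannot be "absorbed into the explicit multiplier": the theorem's multiplier is exactly $M$, not $M\log M$, and when $\omega$ is bounded while $M\to\infty$ your bound exceeds $\mathcal{O}(\omega\log\omega)\cdot M$ by an unbounded factor of order $\log M$. So as written you prove the weaker bound $\mathcal{O}(\omega\log(M\omega))\cdot M$ --- still a polynomial $\chi$-bounding function for each fixed $k_1,k_2$, but not the statement as claimed. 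The repair is to convert your clique-cover information into a vertex partition, which is what the paper's level construction accomplishes: a global partition of $V(G)$ into $M$ classes, each inducing a boxicity-two subgraph of $G$, lets you apply the $\mathcal{O}(\omega\log\omega)$ bound to graphs whose clique number is at most $\omega(G)$ rather than $M\omega(G)$, so no spurious $\log M$ appears. Note that your assignment $v\mapsto(t_1(v),t_2(v))$ does not directly give such a partition, because it depends on the clique $K$ under consideration.
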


The main step towards our proof of \autoref{thm.both.trees.bounded.pw} is to prove that
the vertex set of a graph $G$ as in the statement of \autoref{thm.both.trees.bounded.pw} can be partitioned into 
a constant number of sets so that each of these sets induces a graph of boxicity at most two.
Then we use the fact that the class $\mathcal{I}\gcap \mathcal{I}$ 
is $\chi$-bounded and we color each of these induced subgraphs with a different 
palette of colors.

\begin{lemma}
\label{lem:bounded.pw.partition}
Let $k_{1}$ and $k_{2}$ be positive integers, and let $G_{1}$ and $G_{2}$
be chordal graphs such that for each $i\in [2]$ the graph $G_{i}$ has a representation $(T_{i},\beta_{i})$, 
where $\mathsf{pw}(T_{i})\leq k_{i}$.
If $G$ is a graph such that $G=G_{1}\cap G_{2}$, then there exists a partition $\mathcal{P}$ of 
$V(G)$ such that $|\mathcal{P}|\leq (k_{1}+1)(k_{2}+1)$ and for every $V\in\mathcal{P}$, 
the graph $G[V]$ has boxicity at most two.
\end{lemma}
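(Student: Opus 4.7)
The plan is to partition $V(G)$ using labelings derived from the two path-decompositions, so that within each part both $G_1$ and $G_2$ induce interval graphs; since $G[V]=G_1[V]\cap G_2[V]$ is then the intersection of two interval graphs on $V$, it has boxicity at most two, giving the lemma.

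For each $i\in\{1,2\}$, fix a path-decomposition $(P_i,\delta_i)$ of $T_i$ of width $k_i$. The intervals $J_i(t):=\{p\in V(P_i):t\in\delta_i(p)\}$ over $t\in V(T_i)$ form an interval graph of clique number at most $k_i+1$, which admits a proper $(k_i+1)$-coloring $c_i\colon V(T_i)\to[k_i+1]$ by perfectness; equivalently, $c_i$ is \emph{bag-proper} in the sense that any two vertices sharing a bag of $(P_i,\delta_i)$ receive distinct colors. For each $u\in V(G)$, choose an anchor $\hat t_i(u)\in\beta_i^{-1}(u)$ minimizing $\min J_i(t)$ over $t\in\beta_i^{-1}(u)$, breaking ties by smallest $c_i$-color, and set $L_i(u):=c_i(\hat t_i(u))$. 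Define $\mathcal{P}:=\{V_{a,b}:(a,b)\in[k_1+1]\times[k_2+1]\}$ with $V_{a,b}:=L_1^{-1}(a)\cap L_2^{-1}(b)$, so $|\mathcal{P}|\le(k_1+1)(k_2+1)$.

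The core claim is that for each $i$ and each $a\in[k_i+1]$, the induced subgraph $G_i[L_i^{-1}(a)]$ is an interval graph; this yields the lemma as outlined. Since $G_i[L_i^{-1}(a)]$ is chordal (as an induced subgraph of $G_i$), by the Lekkerkerker--Boland characterization of interval graphs as chordal asteroidal-triple-free graphs it suffices to show $G_i[L_i^{-1}(a)]$ is AT-free. Assume for contradiction that $\{x,y,z\}\subseteq L_i^{-1}(a)$ is an asteroidal triple; then the subtrees $\beta_i^{-1}(x),\beta_i^{-1}(y),\beta_i^{-1}(z)$ are pairwise disjoint in $T_i$, and between each pair there is a chain of pairwise intersecting same-labeled subtrees whose union avoids the third. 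If the three subtrees lie along a single $T_i$-path with one of them in the middle, then any chain linking the two outer ones has connected union in $T_i$ that must pass through the middle subtree's territory, forcing some chain subtree to meet it and contradicting avoidance. Otherwise there exists a branching vertex $t^*\in V(T_i)$ such that $\beta_i^{-1}(x),\beta_i^{-1}(y),\beta_i^{-1}(z)$ lie in three distinct components of $T_i-t^*$.

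The main obstacle is this branching case. I plan to use the structural fact that at most two components of $T_i-t^*$ can contain vertices whose bag-indices lie outside $J_i(t^*)$ (one extending left and one right in $P_i$), so without loss of generality the component containing $\beta_i^{-1}(y)$ is entirely within $J_i(t^*)$; in particular $J_i(\hat t_i(y))\subseteq J_i(t^*)$. Any chain subtree $\beta_i^{-1}(w)$ connecting $\beta_i^{-1}(x)$ to $\beta_i^{-1}(z)$ while avoiding $\beta_i^{-1}(y)$ must contain $t^*$, and its anchor $\hat t_i(w)$ is color $a$ with $\min J_i(\hat t_i(w))\le\min J_i(t^*)$. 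Tracking positions in $P_i$ and using the minimality built into the anchor definition, I plan to show that some chain anchor must occupy the same bag of $P_i$ as $\hat t_i(y)$; the bag-proper coloring then forces $\hat t_i(w)=\hat t_i(y)\in\beta_i^{-1}(w)$, contradicting $\beta_i^{-1}(w)\cap\beta_i^{-1}(y)=\emptyset$. Making this pigeonhole precise — pinning down the exact bag where the collision occurs between the color-$a$ anchors as one sweeps along the chain — is the delicate step I expect to require the most work.
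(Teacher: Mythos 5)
Your top-level strategy coincides with ours: partition $V(G)$ by a pair of labels, one derived from each representation tree, so that every label class induces an interval graph in both $G_{1}$ and $G_{2}$, and then conclude boxicity at most two for each part. The reduction of your core claim to showing that $G_{i}[L_{i}^{-1}(a)]$ is AT-free, and the collinear case of that argument, are fine. The problem is the branching case, and it is not merely the unfinished ``delicate pigeonhole step'': the labelling you define does not in general make $G_{i}[L_{i}^{-1}(a)]$ an interval graph, so that step cannot be completed. Concretely, let $T_{1}$ be the spider with centre $t^{*}$ and legs $t^{*}a_{1}a_{2}$, $t^{*}b_{1}b_{2}$, $t^{*}c_{1}c_{2}$ (so $\mathsf{pw}(T_{1})=2\leq 3=:k_{1}$), with the width-$3$ path-decomposition whose bags are, in order, $\{a_{2},a_{1}\}$, $\{a_{1},b_{2},b_{1}\}$, $\{a_{1},b_{1},c_{2},c_{1}\}$, $\{a_{1},b_{1},c_{1},t^{*}\}$, and the bag-proper $4$-colouring $c(a_{1})=2$, $c(b_{1})=3$, $c(c_{1})=4$, $c(t^{*})=c(a_{2})=c(b_{2})=c(c_{2})=1$ (the last four vertices pairwise never share a bag, since their $J$-intervals are $\{4\},\{1\},\{2\},\{3\}$). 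Take the six subtrees $X=\{a_{2}\}$, $Y=\{b_{2}\}$, $Z=\{c_{2}\}$, $W_{A}=\{t^{*},a_{1},a_{2}\}$, $W_{B}=\{t^{*},b_{1},b_{2}\}$, $W_{C}=\{t^{*},c_{1},c_{2}\}$. Their intersection graph is the net (triangle $W_{A}W_{B}W_{C}$ with pendants $X,Y,Z$), which is chordal but not interval because $\{X,Y,Z\}$ is an asteroidal triple. Your anchors of $X,Y,Z$ are $a_{2},b_{2},c_{2}$, all of colour $1$; and in each $W$ the minimum of $\min J$ is attained by the two leg vertices, where the smallest-colour tie-break again selects $a_{2},b_{2},c_{2}$. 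Hence all six subtrees get label $1$ and $G_{1}[L_{1}^{-1}(1)]$ is the net (taking $G_{2}$ complete with $T_{2}$ a single vertex makes this a full counterexample to the core claim). The same example also refutes your intermediate assertion that at most two components of $T_{i}-t^{*}$ contain vertices whose bags leave $J_{i}(t^{*})$: here all three legs leave $J_{1}(t^{*})=\{4\}$ on the left. Restricting to minimum-width decompositions would not obviously help, and your proof gives no argument that it would; the underlying issue is that a path-decomposition of $T_{i}$ records only the linear order of the bags and is blind to the branching of $T_{i}$ at $t^{*}$.

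For comparison, our proof obtains the labels from the branching structure itself: a tree of path-width at most $k$ contains an induced path meeting every bag of an optimal path-decomposition, so deleting it lowers the path-width; iterating peels $T_{i}$ into $k_{i}+1$ layers, each a disjoint union of paths, with exactly one path of layer $l$ inside each component of what remains after removing layers $1,\dots,l-1$. A subtree is labelled by the first layer it meets, and the Helly property for subtrees of a tree shows that two level-$l$ subtrees intersect if and only if their traces on that single path of layer $l$ intersect; this is what turns branching subtrees into honest intervals. Any salvage of an anchor-based scheme would need the labels to interact with the components of the not-yet-processed part of $T_{i}$, which is essentially what the peeling accomplishes.
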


In 2021, Chalermsook and Walczak \cite{chalermsook2021coloring} provided an improvement 
on the upper bound of Asplund and Gr\"unbaum \cite{asplund1960coloring} for the chromatic number
of graphs of boxicity at most two.

\begin{theorem}[Chalermsook and Walczak, \cite{chalermsook2021coloring}]
\label{thm.ref.coloring.rectangles.2021}
Every family of axis-parallel rectangles in the plane with 
clique number $\omega$ is $\mathcal{O}(\omega\log(\omega))$-colorable, 
and an $\mathcal{O}(\omega\log(\omega))$-coloring of it can be computed in polynomial time.
\end{theorem}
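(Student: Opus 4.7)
The plan is to follow the strategy of Chalermsook and Walczak, which sharpens the classical $\mathcal{O}(\omega^2)$ bound of Asplund and Gr\"unbaum via a careful divide-and-conquer on a sweep line. Given a family $\mathcal{F}$ of axis-parallel rectangles with clique number $\omega$, I would first reduce to the case where all rectangle coordinates are in general position (no two share an $x$- or $y$-coordinate), and then select a vertical line $\ell$ that splits $\mathcal{F}$ into three subfamilies: rectangles strictly to the left of $\ell$, rectangles strictly to the right, and rectangles crossing $\ell$. Since the left and right subfamilies occupy disjoint vertical strips, their intersection graphs are vertex-disjoint, so they can be colored recursively using a common palette; the total color count is then the maximum of the two recursive costs plus the cost of coloring the crossing rectangles.

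The main step is to bound the chromatic number of the crossing rectangles. Their intersections with $\ell$ form a family of intervals, but two crossing rectangles intersect in the plane only when their horizontal extents also overlap, so the interval intersection graph on $\ell$ is only a supergraph of the true intersection graph. To obtain an $\mathcal{O}(\omega)$ bound at this step, I would further partition the crossing rectangles by the quadrant (relative to a chosen pivot on $\ell$) in which certain corners lie, and show that each quadrant class admits an $\mathcal{O}(\omega)$-coloring via a staircase-type argument: within such a class, the crossing rectangles can be linearly ordered so that intersection depends on inclusion of suitable intervals, which reduces to coloring a well-structured comparability graph. Choosing $\ell$ as a balanced cut, for instance through a median $x$-coordinate, yields a recursion of depth $\mathcal{O}(\log n)$, which via amortized accounting across levels collapses to a total of $\mathcal{O}(\omega \log \omega)$ colors rather than the $\mathcal{O}(\omega \log n)$ a naive count would suggest.

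The main obstacle is the $\mathcal{O}(\omega)$ bound for the crossing rectangles: a direct application of the Asplund--Gr\"unbaum argument to this subproblem already yields $\mathcal{O}(\omega^2)$, and the improvement requires the staircase decomposition and charging scheme that is the technical heart of the Chalermsook--Walczak paper; in particular, one has to argue that the local clique number across staircase layers does not decay so slowly as to force a $\log n$ rather than $\log \omega$ factor. For the algorithmic claim, each recursive step (choosing the sweep line, classifying rectangles by quadrant, and coloring each staircase class) is polynomial in the number of rectangles, and since the recursion depth is $\mathcal{O}(\log n)$, the overall algorithm runs in polynomial time.
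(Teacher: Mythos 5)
First, a point of order: the paper does not prove this statement. It is imported verbatim from Chalermsook and Walczak \cite{chalermsook2021coloring} and used as a black box in the proof of \autoref{thm.both.trees.bounded.pw}, so there is no internal proof to compare against; your sketch has to be judged on its own terms as an attempted proof of the cited result.

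Judged that way, it has a genuine gap exactly where you locate it, and the surrounding machinery does not close it. The divide-and-conquer you describe --- recurse on the two sides of a median vertical line with a shared palette, and spend a fresh palette on the rectangles crossing the line at each level --- yields $\mathcal{O}(\omega\log n)$ colors, not $\mathcal{O}(\omega\log\omega)$: the recursion depth is $\Theta(\log n)$, a rectangle crossing the cut line at one level can intersect rectangles crossing cut lines at every other level (so the per-level palettes cannot in general be reused), and the clique number of the subfamilies does not decrease as you recurse on the point set. The appeal to ``amortized accounting across levels'' is therefore not an argument; replacing $\log n$ by $\log\omega$ is precisely the contribution of Chalermsook and Walczak, and it is not obtained by a balanced geometric sweep. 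The route that produces a $\log\omega$ factor recurses on the clique number rather than on the number of rectangles (a recurrence of the shape $T(\omega)\le 2\,T(\lceil\omega/2\rceil)+\mathcal{O}(\omega)$, with the two half-clique-number pieces on disjoint palettes, is what resolves to $\mathcal{O}(\omega\log\omega)$), together with a delicate analysis of corner-intersecting pairs. Separately, the step you single out as the ``main obstacle'' is in fact the easy one: if every rectangle in a subfamily meets a common vertical line $\ell$, then all of their $x$-projections contain the $x$-coordinate of $\ell$ and hence pairwise overlap, so two such rectangles intersect if and only if their traces on $\ell$ intersect as intervals. The crossing family is thus exactly an interval graph and is $\omega$-colorable with no quadrant or staircase analysis. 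Since the entire difficulty of the theorem lives in the step you leave open, the sketch cannot stand as a proof; for the purposes of this paper the correct move is the one the authors make, namely to cite the result.
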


Since \autoref{thm.both.trees.bounded.pw} follows immediately by \autoref{lem:bounded.pw.partition} and \autoref{thm.ref.coloring.rectangles.2021}, in order to prove \autoref{thm.both.trees.bounded.pw} it remains to prove \autoref{lem:bounded.pw.partition}. The main observation that we need is that if a graph has path-width at most $k$, then it can decomposed into a family of $k+1$ disjoint subgraphs, each of which is a disjoint union of induced paths.

We first need a result about tree-decompositions. Let $G$ be a graph, $X\subseteq V(G)$, and $u,w$ two vertices of $G$. We say that $X$ \defin{separates} $u$ from $w$ in $G$ if $u$ and $w$ are in different components of the graph $G-X$.

\begin{lemma}[{Robertson and Seymour \cite[(2.4)]{graph.minors.II.TW.1986}}]
\label{lem.TD.induced.seperators}
Let $G$ be a graph, $(T,\beta)$ be a tree-decomposition of $G$, let $\{t_{1}, t_{2}\}$
be an edge of $T$. If $T_{1}$ and $T_{2}$ are the components of $T\setminus \{t_{1}, t_{2}\}$, 
where $t_{1}\in V(T_{1})$ and $t_{2}\in V(T_{2})$, 
then $\beta(t_{1}) \cap \beta(t_{2})$ separates $V_{1}:=\bigcup_{t\in V(T_{1})}\beta(t)$
from $V_{2}:=\bigcup_{t\in V(T_{2})}\beta(t)$ in $G$.
\end{lemma}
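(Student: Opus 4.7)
The plan is to argue by contradiction, leaning on the two defining features of a tree-decomposition $(T,\beta)$ of $G$ as formulated in this paper. Write $H$ for the associated chordal completion and $S_{v} := \beta^{-1}(v)$ for each $v \in V(G)$. The representation condition gives two properties: (i) $T[S_{v}]$ is a subtree of $T$, and (ii) every edge $uv$ of $G$, being an edge of $H$, satisfies $S_{u} \cap S_{v} \neq \emptyset$; equivalently $u$ and $v$ lie together in some bag. Suppose for contradiction that $\beta(t_{1}) \cap \beta(t_{2})$ does not separate $V_{1}$ from $V_{2}$ in $G$; then there is a path $P = x_{0} x_{1} \cdots x_{n}$ in $G - (\beta(t_{1}) \cap \beta(t_{2}))$ with $x_{0} \in V_{1}$ and $x_{n} \in V_{2}$, and in particular no $x_{i}$ belongs to $\beta(t_{1}) \cap \beta(t_{2})$.

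The main step is to form the subgraph $U := \bigcup_{i=0}^{n} T[S_{x_{i}}]$ of $T$ and observe that it is connected. Each $T[S_{x_{i}}]$ is connected by (i), and property (ii) applied to each edge of $P$ yields $S_{x_{i}} \cap S_{x_{i+1}} \neq \emptyset$ for $0 \leq i < n$, so consecutive pieces share a vertex and the union is connected. Since $x_{0} \in V_{1}$, some vertex of $S_{x_{0}}$ lies in $V(T_{1})$, and since $x_{n} \in V_{2}$, some vertex of $S_{x_{n}}$ lies in $V(T_{2})$. Thus $U$ is a connected subgraph of $T$ that meets both components of $T$ obtained by deleting the edge $\{t_{1}, t_{2}\}$, so $U$ must itself contain that edge.

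From here the contradiction is immediate: an edge of $T$ belongs to $U$ only if both its endpoints lie in a common $S_{x_{i}}$, so there exists $i$ with $\{t_{1}, t_{2}\} \subseteq S_{x_{i}}$, that is, $x_{i} \in \beta(t_{1}) \cap \beta(t_{2})$, contradicting the choice of $P$. The only delicate point, and the main obstacle to a sloppy write-up, is correctly tracking when an edge of $T$ is contributed to $U$: since $U$ is a union of induced subtrees, the edge $\{t_{1}, t_{2}\}$ appears in $U$ only if it lies inside one of them, and this is precisely the statement that forces the desired contradiction. No ingredient beyond this Helly-flavoured observation about unions of pairwise intersecting subtrees is needed.
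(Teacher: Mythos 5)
Your proof is correct: the union-of-subtrees argument, with the observation that a connected subgraph of $T$ meeting both $V(T_{1})$ and $V(T_{2})$ must contain the edge $\{t_{1},t_{2}\}$ and hence some bag $S_{x_{i}}$ contains both its ends, is exactly the standard proof of this fact. The paper does not prove the lemma itself but only cites Robertson and Seymour, and their argument is the same one you give.
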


\begin{lemma}
\label{lem.deleting.a.path}
Let $G$ be a connected graph and let $k$ be a positive integer. If $G$ has path-width at most $k$, then there exists an induced path $Q$ which is a subgraph of $G$ such that $G\setminus V(Q)$ has path-width at most $k-1$.
\end{lemma}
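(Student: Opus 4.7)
The plan is to construct $Q$ as a shortest path in $G$ between two carefully chosen endpoints of a fixed path-decomposition. I will first fix a path-decomposition $(P,\beta)$ of $G$ realizing $\mathsf{pw}(G)$, with $P=p_{1}p_{2}\cdots p_{n}$. The case $\mathsf{pw}(G)\leq k-1$ is trivial: any single vertex $v$ gives $\mathsf{pw}(G-v)\leq \mathsf{pw}(G)\leq k-1$, so $Q:=v$ works. The interesting case is $\mathsf{pw}(G)=k$, in which the set $B:=\{i\in[n]:|\beta(p_{i})|=k+1\}$ is nonempty. I will set $i_{\min}:=\min B$ and $i_{\max}:=\max B$ and pick $s\in\beta(p_{i_{\min}})$ and $t\in\beta(p_{i_{\max}})$ with $s\neq t$; this is always possible because each of these bags has size $k+1\geq 2$.

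Next, I will take $Q$ to be any shortest $(s,t)$-path in $G$, which exists by connectivity and which is automatically an induced path, since a chord would contradict the minimality of its length. The crux of the argument will be to show that $V(Q)\cap \beta(p_{i})\neq\emptyset$ for every $i\in B$. For $i\in\{i_{\min},i_{\max}\}$ the claim is immediate because $s,t\in V(Q)$. For $i_{\min}<i<i_{\max}$, if either $s$ or $t$ already lies in $\beta(p_{i})$ we are done; otherwise, using the fact that the set of bag indices containing a given vertex is an interval of $[n]$, the vertex $s$ appears only in bags with indices strictly less than $i$, and $t$ only in bags with indices strictly greater than $i$. \autoref{lem.TD.induced.seperators}, applied to a suitable edge of $P$ incident to $p_{i}$, then produces a subset of $\beta(p_{i})$ that separates $s$ from $t$ in $G$, so $Q$ must meet $\beta(p_{i})$.

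Finally, I will define $\beta'(p_{i}):=\beta(p_{i})\setminus V(Q)$ for each $i\in[n]$ and verify that $(P,\beta')$ is a path-decomposition of $G-V(Q)$. For every $i\in B$ the bag $\beta'(p_{i})$ has size at most $(k+1)-1=k$ by the claim, while for $i\notin B$ we already had $|\beta(p_{i})|\leq k$. Hence $\mathsf{pw}(G-V(Q))\leq k-1$, as required. The one step that will require genuine care is the separator argument for intermediate bags, where we have to split into cases depending on whether $s$ or $t$ is already in $\beta(p_{i})$ and otherwise locate them on opposite sides of a suitable edge of $P$; everything else amounts to bookkeeping together with the standard fact that shortest paths are induced.
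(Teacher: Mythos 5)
Your proposal is correct and follows essentially the same route as the paper: fix an optimal path-decomposition, take an induced path between vertices chosen from extreme bags, use \autoref{lem.TD.induced.seperators} to show this path meets the relevant bags, and then delete its vertices from every bag. The only differences are cosmetic refinements (taking a shortest path to guarantee inducedness, and arguing only about the maximum-size bags rather than all bags), which if anything make the argument slightly more robust than the paper's version.
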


\begin{proof}[Proof of \autoref{lem.deleting.a.path}]
Consider a path-decomposition $(P,\beta)$ of $G$ which realizes its path-width. Let $p_{1},\ldots,p_{l}$ be the elements of $V(P)$ enumerated in the order that they appear in $P$. Let $v_{1}\in V(G)\cap \beta(p_{1})$ and $v_{l} \in V(G)\cap \beta(p_{l})$, and let $Q$ be an induced $(v_{1}, v_{l})$-path in $G$. We define the function $\beta':V(P)\rightarrow 2^{V(G)}$ as follows: for every $p\in V(p)$ we have $\beta'(p):=\beta(p)\setminus V(Q)$. Then $(P,\beta')$ is a path-decomposition of the graph $G\setminus V(Q)$. Moreover, by \autoref{lem.TD.induced.seperators}, it follows that for each $i\in [l]$ we have $V(Q)\cap \beta(p_{i}) \neq \emptyset$. Thus, the width of $(P,\beta')$ is at most $k-1$.
\end{proof}

\begin{corollary}
\label{corol.path.decomp.for.proof}
Let $k$ be a positive integer. If $G$ is a graph of path-width at most $k$, 
then there exist (possibly null) induced subgraphs $P_{1}, \ldots , P_{k+1}$ of $G$ such that the following hold:
\begin{enumerate}[topsep=0.25cm,itemsep=-0.4cm,partopsep=0.5cm,parsep=0.5cm]
\item Every component of the graph $P_{1}$ is a path.
\item For each $i\in [2,k+1]$, $P_{i}$ is an induced subgraph of $G\setminus (V(P_{1}) \cup \ldots \cup V(P_{i-1}))$, and every component of $G \setminus \cup_{j<i}V(P_{j})$ contains exactly one component of $P_{i}$.
\item $V(G)= \bigcup_{i \in [k+1]} V(P_{i})$.
\end{enumerate}
\end{corollary}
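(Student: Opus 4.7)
The plan is to prove \autoref{corol.path.decomp.for.proof} by induction on $k$, with one small strengthening of the inductive claim: in addition to the three conditions stated, I will also require that every connected component of $G$ contains exactly one component of $P_{1}$. This strengthening is essential because condition~(2) at $i=2$ demands that every component of $G\setminus V(P_{1})$ contains exactly one component of $P_{2}$, and when we recurse and take $P_{2}$ to be the ``$P_{1}$'' produced by the inductive hypothesis on $G\setminus V(P_{1})$, condition~(1) of the corollary alone does not guarantee this one-to-one correspondence.

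For the base case $k=1$, I would apply \autoref{lem.deleting.a.path} to each connected component $C$ of $G$ to obtain an induced path $Q_{C}\subseteq C$ such that $C\setminus V(Q_{C})$ has path-width at most $0$, hence is edgeless. Setting $P_{1}$ equal to the (disjoint) union of all the $Q_{C}$ and $P_{2}:=G\setminus V(P_{1})$, the components of $P_{2}$ are isolated vertices (trivially paths), and conditions~(1)--(3) together with the strengthening on $P_{1}$ are immediate from the per-component construction.

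For the inductive step with $k\geq 2$, I would again apply \autoref{lem.deleting.a.path} once per connected component of $G$ to produce induced paths whose disjoint union is $P_{1}$, so that $G\setminus V(P_{1})$ has path-width at most $k-1$. The strengthened induction hypothesis applied to $G\setminus V(P_{1})$ yields induced subgraphs $P'_{1},\ldots,P'_{k}$, and I would set $P_{i}:=P'_{i-1}$ for $i\in[2,k+1]$. Condition~(1) and the strengthening on $P_{1}$ hold by construction; condition~(2) at $i=2$ is exactly the strengthening supplied by the hypothesis for $G\setminus V(P_{1})$; and condition~(2) at $i\geq 3$ together with condition~(3) transfer verbatim from the smaller instance using the identity
\[
G\setminus \bigcup_{j<i} V(P_{j}) \;=\; \bigl(G\setminus V(P_{1})\bigr)\setminus \bigcup_{j<i-1} V(P'_{j}).
\]

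The main (and really only) obstacle is recognizing that the inductive hypothesis must be strengthened to track the ``exactly one component of $P_{1}$ per component of $G$'' property; the corollary's statement about $P_{1}$ is not self-inductive, but the strengthened version is exactly what the per-component application of \autoref{lem.deleting.a.path} produces for free, so the recursion then goes through without further difficulty.
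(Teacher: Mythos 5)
Your proof is correct and follows essentially the same route as the paper's: induction on $k$, applying \autoref{lem.deleting.a.path} once per connected component to form $P_{1}$, and recursing on $G\setminus V(P_{1})$. The strengthening of the inductive hypothesis that you add (exactly one component of $P_{1}$ per component of $G$) is a genuine point that the paper's proof leaves implicit --- its per-component construction delivers that property, but the statement being inducted on does not record it --- so your version is, if anything, slightly more careful.
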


\begin{proof}[Proof of \autoref{corol.path.decomp.for.proof}]
We prove the statement by induction on $k$. If the graph $G$ has path-width equal to one, then $G$ is the disjoint union of paths, and letting $P_{1}:= G$ we see that the statement of \autoref{corol.path.decomp.for.proof} holds.

Let $k>1$ and suppose that the statement of 
\autoref{corol.path.decomp.for.proof} holds for every positive integer $k'<k$.
Let $C_{1}, \ldots , C_{l}$ be the connected components of $G$. For each $j \in [l]$, we have that 
$C_{j}$ is a connected graph of path-width at most $k$. Let
$P_{1}^{j}$ be a subgraph of $C_{j}$ which is a path as in the statement of 
\autoref{lem.deleting.a.path}. Let $P_{1}: = \cup_{j\in [l]} P_{1}^{j}$.
Consider the graph $G':=G\setminus V(P_{1})$ which, by \autoref{lem.deleting.a.path}, has
path-width at most $k':=k-1<k$. Then, by applying the induction hypothesis to the graph 
$G'$, we obtain subgraphs $P_{2}, \ldots , P_{k+1}$ of $G'$ such that the subgraphs 
$P_{1}, \ldots , P_{k+1}$ of $G$ satisfy the statement of 
\autoref{corol.path.decomp.for.proof}.
\end{proof}

We are now ready to prove \autoref{lem:bounded.pw.partition}.

\begin{proof}[Proof of \autoref{lem:bounded.pw.partition}]
Let $P_{1}, \ldots, P_{k_{1}+1}$ and $Q_{1}, \ldots ,Q_{k_{2}+1}$ 
be subgraphs of $T_{1}$ and $T_{2}$ respectively, 
chosen as in \autoref{corol.path.decomp.for.proof}.

Let $X$ be a subtree of $T_{1}$. We define the level of $X$, denoted by $L_{1}(X)$, as follows: 
$$L_{1}(X) := \min\{i\in[k_{1} + 1]:V(X)\cap V(P_{i})\neq \emptyset\}.$$
Similarly, we define the level of a subtree $X$ of $T_{2}$ as follows:
$$L_{2}(X):= \min\{i\in[k_{2} + 1]:V(X)\cap V(Q_{i})\neq \emptyset\}.$$

\begin{claim}
\label{cl.1.thm.both.trees.bounded.pw}
Let $X$ and $Y$ be subtrees of $T_{1}$ such that $L_{1}(X)=L_{1}(Y)=l$. Then the following hold:
\begin{enumerate}
    \item Both $X\cap P_{l}$ and $Y\cap P_{l}$ are paths; and
    \item $V(X)\cap V(Y)\neq \emptyset$ if and only if $V(X)\cap V(Y)\cap V(P_{l}) \neq \emptyset$.
\end{enumerate}  
Similarly for subtrees of $T_{2}$.
\end{claim}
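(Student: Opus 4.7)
The plan is to exploit the hierarchical decomposition of $T_1$ given by \autoref{corol.path.decomp.for.proof} together with the Helly property for subtrees of a tree (\autoref{helly.for.subtrees}). The argument for subtrees of $T_2$ is then identical, with $P_i$ replaced by $Q_i$ and $L_1$ by $L_2$.

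For part (1), I would first note that since $L_1(X) = l$, the subtree $X$ is disjoint from $\bigcup_{j < l} V(P_j)$, and, being connected, is contained in a single component $C_X$ of $T_1 \setminus \bigcup_{j<l} V(P_j)$. By property (2) of \autoref{corol.path.decomp.for.proof}, $C_X$ contains exactly one component $R_X$ of $P_l$, and this component is a path. Hence $V(X) \cap V(P_l) = V(X) \cap V(R_X)$. This is the vertex set of the intersection of two subtrees of the tree $T_1$, so it is either empty or induces a subtree; it is nonempty because $L_1(X) = l$, and being contained in $R_X$ it is a path. The same argument gives that $Y \cap P_l$ is a path.

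For part (2), the backward implication is immediate. For the forward direction, assume $V(X) \cap V(Y) \neq \emptyset$. Then the components $C_X$ and $C_Y$ of $T_1 \setminus \bigcup_{j<l} V(P_j)$ containing $X$ and $Y$ respectively must coincide; call this common component $C$, and let $R$ denote the unique component of $P_l$ contained in $C$. Now the three subtrees $X, Y, R$ of $T_1$ pairwise intersect: $V(X) \cap V(R) \neq \emptyset$ and $V(Y) \cap V(R) \neq \emptyset$ follow from $L_1(X) = L_1(Y) = l$ together with the identification of $R$, and $V(X) \cap V(Y) \neq \emptyset$ is the hypothesis. Applying \autoref{helly.for.subtrees} yields $V(X) \cap V(Y) \cap V(R) \neq \emptyset$, and hence $V(X) \cap V(Y) \cap V(P_l) \neq \emptyset$, as required.

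There is no serious obstacle here: the two structural ingredients available, namely the uniqueness of the path $R$ inside the ambient component $C$ (provided by \autoref{corol.path.decomp.for.proof}) and the Helly property for subtrees of trees, slot together cleanly. The only thing to keep straight is which ambient tree and which level one is working at, so that the right component of the appropriate $P_l$ (or $Q_l$) is selected.
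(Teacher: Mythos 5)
Your proof is correct and follows essentially the same route as the paper's: restrict to the component $C$ of $T_{1}\setminus\bigcup_{j<l}V(P_{j})$ containing the subtree(s), use \cref{corol.path.decomp.for.proof} to identify the unique path component of $P_{l}$ inside $C$, and apply the Helly property (\cref{helly.for.subtrees}) to the three pairwise-intersecting subtrees. Your justification of part (1) is in fact slightly more explicit than the paper's, but the argument is the same.
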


\begin{subproof}[Proof of \autoref{cl.1.thm.both.trees.bounded.pw}]
We prove the claim for $T_{1}$; the proof for $T_{2}$ is identical. Since $L_{1}(X)=l$, we have that $V(X)\cap (V(P_{1}) \cup \ldots \cup V(P_{l-1})) = \emptyset$. Thus $X$ is contained in a connected component of the forest $T \setminus (V(P_{1}) \cup \ldots \cup V(P_{l-1}))$. Let $C$ be this component. By \autoref{corol.path.decomp.for.proof}, we have that $Z:=P_{l}\cap C$ is a path, and thus $X\cap P_{l}$ is a path as well. With identical arguments we get that $Y\cap P_{l}$ is a path.

For the second statement of our claim: The reverse implication is immediate. For the forward implication: Since $V(X)\cap V(Y)\neq \emptyset$, both the subtrees $X$ and $Y$ are contained in the same connected component of the forest $T \setminus (V(P_{1}) \cup \ldots \cup V(P_{l-1}))$. Let $C$ be this component. By \autoref{corol.path.decomp.for.proof}, we have that $Z:=P_{l}\cap C$ is a path. Consider the tree $C$ and its family of subtrees $\{X,Y,Z\}$. Since $V(X)\cap V(Z) \neq \emptyset$, $V(Y)\cap V(Z) \neq \emptyset$ and $V(X)\cap V(Y) \neq \emptyset$, by \autoref{helly.for.subtrees}, it follows that $V(X)\cap V(Y) \cap V(Z) \neq \emptyset$. In particular $V(X)\cap V(Y) \cap V(P_{l}) \neq \emptyset$. This concludes the proof of \autoref{cl.1.thm.both.trees.bounded.pw}.
\end{subproof}

In what follows in this proof, for every $v\in V(G)$ and $i\in[2]$, we denote by $T_{i}^{v}$ the subtree 
$T_{i}[\beta_{i}(v)]$ of $T_{i}$.
For each $i\in [k_{1}+1]$ and for each $j\in [k_{2}+1]$, we define a subset of $V(G)$
as follows: $$V_{i,j}:=\{v\in V(G):L_{1}(T_{1}^{v})=i \text{ and } L_{2}(T_{2}^{v})=j\}.$$
Let $\mathcal{P}:=\{V_{i,j}\}_{i \in [k_{1}+1], j\in [k_{2}+1]}$ 
and observe that $\mathcal{P}$ is a partition of $V(G)$.

\begin{claim}
\label{claim.interval}
For each $i\in [k_{1}+1]$ the graph $G_{1}[\bigcup_{j\in [k_{2}+1]}V_{i,j}]$ is an interval graph. Similarly for each $j\in [k_{2}+1]$, and $G_{2}[\bigcup_{i\in [k_{1}+1]}V_{i,j}]$.
\end{claim}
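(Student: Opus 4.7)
The plan is to show that the induced subgraph $G_{1}[\bigcup_{j \in [k_{2}+1]} V_{i,j}]$ is the intersection graph of a family of subpaths of the forest $P_{i}$, which is a disjoint union of interval graphs (one per path-component of $P_{i}$) and hence an interval graph. The analogous statement for $G_{2}$ will then follow by the symmetric argument using $Q_{j}$ and $L_{2}$ in place of $P_{i}$ and $L_{1}$.

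First, I would note that $U_{i} := \bigcup_{j \in [k_{2}+1]} V_{i,j} = \{v \in V(G) : L_{1}(T_{1}^{v}) = i\}$, and that, since $G_{1}$ is (up to isomorphism) the intersection graph of the family $\{T_{1}^{v} : v \in V(G_{1})\}$, the induced subgraph $G_{1}[U_{i}]$ is the intersection graph of $\{T_{1}^{v} : v \in U_{i}\}$. By part (1) of \autoref{cl.1.thm.both.trees.bounded.pw}, for each $v \in U_{i}$ the subgraph $T_{1}^{v} \cap P_{i}$ of $T_{1}$ is a (nonempty) path. Moreover, since $L_{1}(T_{1}^{v}) = i$, the subtree $T_{1}^{v}$ is disjoint from $V(P_{1}) \cup \cdots \cup V(P_{i-1})$, and so, being connected, lies inside a single component $C$ of the forest $T_{1} \setminus \bigcup_{j < i} V(P_{j})$. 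By \autoref{corol.path.decomp.for.proof}, $P_{i} \cap C$ is a path, so $T_{1}^{v} \cap P_{i}$ is a subpath of this path-component of $P_{i}$.

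Second, I would apply part (2) of \autoref{cl.1.thm.both.trees.bounded.pw}: for any $v, w \in U_{i}$, the vertices $v$ and $w$ are adjacent in $G_{1}$ iff $V(T_{1}^{v}) \cap V(T_{1}^{w}) \neq \emptyset$, iff $V(T_{1}^{v} \cap P_{i}) \cap V(T_{1}^{w} \cap P_{i}) \neq \emptyset$. Hence $G_{1}[U_{i}]$ is the intersection graph of the subpaths $\{T_{1}^{v} \cap P_{i} : v \in U_{i}\}$ of the forest $P_{i}$, and since each such subpath lies in a single path-component of $P_{i}$, this intersection graph is the disjoint union, over the path-components of $P_{i}$, of intersection graphs of subpaths of a single path. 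Each summand is an interval graph, and a finite disjoint union of interval graphs is again an interval graph (concatenate the interval representations on pairwise disjoint portions of the real line), so the conclusion follows. I do not anticipate any real obstacle here: the only delicate point is that an adjacency in $G_{1}$ between $v, w \in U_{i}$ could a priori arise from $T_{1}^{v}$ and $T_{1}^{w}$ meeting outside $P_{i}$, and this is precisely what part (2) of \autoref{cl.1.thm.both.trees.bounded.pw} forbids.
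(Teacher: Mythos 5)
Your proposal is correct and follows essentially the same route as the paper's own proof: both define $P_i^v := P_i \cap T_1^v$, invoke \autoref{cl.1.thm.both.trees.bounded.pw} to see that each $P_i^v$ is a path and that adjacency in $G_1$ among vertices of level $i$ is equivalent to intersection of these paths, and then conclude that the graph is an intersection graph of subpaths of the disjoint union of paths $P_i$, hence an interval graph. Your explicit remark that each subpath lies in a single path-component of $P_i$ is a slightly more careful articulation of the same final step.
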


\begin{subproof}[Proof of \autoref{claim.interval}]
We prove the claim for $G_{1}$; the proof for $G_{2}$ is identical. For each vertex $v \in G_{1}[\bigcup_{j\in [k_{2}+1]}V_{i,j}]$, let $P_{i}^{v}:= P_{i}\cap T_{1}^{v}$. Then, by \autoref{cl.1.thm.both.trees.bounded.pw}, we have that $P_{i}^{v}$ is a path. Let $u,v \in G_{1}[\bigcup_{j\in [k_{2}+1]}V_{i,j}]$. Then, by \autoref{cl.1.thm.both.trees.bounded.pw}, we have that $u$ is adjacent to $v$ if and only if $P_{i}^{u} \cap P_{i}^{v} \neq \emptyset$. 

Hence, the graph $G_{1}[\bigcup_{j\in [k_{2}+1]}V_{i,j}]$
is the intersection graph of the family $\{P_{i}^{v} :v \in G_{1}[\bigcup_{j\in [k_{2}+1]}V_{i,j}]\}$
of subpaths $P_{i}$. Since $P_{i}$ is the disjoint union of paths
it follows that every component of $G_{1}[\bigcup_{j\in [k_{2}+1]}V_{i,j}]$ is an interval graph, and so $G_{1}[\bigcup_{j\in [k_{2}+1]}V_{i,j}]$ is an interval graph as well.
\end{subproof}

Let $i\in [k_{1}+1]$ and $j\in [k_{2}+1]$. Then, by \autoref{claim.interval}, we have $G[V_{i,j}]\in \mathcal{I}\gcap \mathcal{I}$. Hence $\mathcal{P}$ is the desired partition.
\end{proof}

\subsection{Subclasses of \texorpdfstring{$\mathcal{C}\gcap \mathcal{C}$}{TEXT}: 
When at least one chordal graph has a representation tree of bounded radius}
\label{sub:radius}

For each positive integer $k$ we consider the subclass of $\mathcal{C}\gcap \mathcal{C}$ in 
which one of the two chordal graphs in the intersection has 
a representation tree of radius at most $k$, and we prove that this class is $\chi$-bounded.

\begin{theorem}
\label{thm.bounded.radius}
Let $k$ be a positive integer, and let $G_{1}$ and $G_{2}$ be chordal graphs such that
the graph $G_{1}$ has a representation $(T_{1},\beta_{1})$ where $\mathsf{rad}(T_{1})\leq k$.
If $G$ is a graph such that $G=G_{1}\cap G_{2}$, then $\chi(G) \leq k\cdot \omega(G)$.
\end{theorem}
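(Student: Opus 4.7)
The proof proceeds by induction on $k$. Fix a root $r$ of $T_1$ realizing the radius, so that every vertex of $T_1$ lies within distance at most $k$ of $r$. For each $v \in V(G)$, let $t_v$ denote the unique vertex of the subtree $T_1^v := T_1[\beta_1^{-1}(v)]$ that is closest to $r$; this is well-defined since $T_1^v$ is connected. The central structural observation is that whenever $uv \in E(G_1)$, the intersection $T_1^u \cap T_1^v$ is nonempty, and this forces $t_u$ and $t_v$ to be comparable in the rooted ancestor order of $T_1$. In particular, if the tops $t_v$ and $t_w$ lie in subtrees of $T_1$ rooted at distinct children of $r$, then $v$ and $w$ are non-adjacent in $G_1$ and hence in $G$.

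I partition $V(G) = U \sqcup \bigsqcup_{c \in N_{T_1}(r)} V^c$, where $U := \beta_1(r)$ is the bag at the root, and for each child $c$ of $r$ in $T_1$, $V^c := \{v \in V(G) \setminus U : T_1^v \subseteq V(T^c)\}$, with $T^c$ the subtree of $T_1$ rooted at $c$. By the structural observation, $G$ has no edges between $V^c$ and $V^{c'}$ for $c \neq c'$, so $G[V(G) \setminus U]$ is a disjoint union of the induced subgraphs $G[V^c]$. Since $U$ is a clique of $G_1$, every pair in $U$ that is $G_2$-adjacent is automatically $G$-adjacent, so $G[U] = G_2[U]$ is an induced subgraph of the chordal graph $G_2$, hence chordal and perfect; thus $\chi(G[U]) \leq \omega(G[U]) \leq \omega(G)$.

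For each child $c$ of $r$, the restriction $G_1[V^c]$ is the intersection graph of the family $\{T_1^v : v \in V^c\}$ of subtrees of $T^c$, so $G_1[V^c]$ is chordal with representation tree $T^c$, and $T^c$ (rooted at $c$) has radius at most $k - 1$. Applying the inductive hypothesis to $G[V^c] = G_1[V^c] \cap G_2[V^c]$ gives $\chi(G[V^c]) \leq (k-1) \cdot \omega(G[V^c]) \leq (k-1) \cdot \omega(G)$. Combining a palette of $\omega(G)$ colors for $G[U]$ with a disjoint palette of $(k-1) \cdot \omega(G)$ colors reused across the pairwise-disconnected subgraphs $G[V^c]$ produces a proper coloring of $G$ using at most $k \cdot \omega(G)$ colors.

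The main obstacle is the structural comparability lemma stating that the tops $t_u$ and $t_v$ of two $G_1$-adjacent vertices must lie on a common root-to-leaf path in $T_1$; this is what ensures that the partition by children of $r$ respects $G$-adjacency and drives the inductive recurrence. Once this lemma is established by a careful argument about how two connected subtrees of a rooted tree can intersect, the rest of the proof is a clean partition-and-induction, with the root bag $U$ contributing one factor of $\omega(G)$ and the recursive coloring of the children absorbing the remaining $(k-1)$ factors.
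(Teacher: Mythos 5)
Your decomposition is essentially the paper's argument in recursive form: if you unroll the recursion, the vertices that receive the ``root palette'' at recursion depth $i$ are exactly those $v$ for which the top $t_v$ of $T_1^v$ lies at depth $i$ in $T_1$, which is precisely the level partition used in the paper's \autoref{lem:radius.partition}. All of your structural claims are correct: $U=\beta_1(r)$ is a clique of $G_1$, so $G[U]=G_2[U]$ is chordal; subtrees living in distinct components of $T_1-r$ are disjoint, so the graphs $G[V^c]$ are pairwise non-adjacent; and $T^c$ rooted at $c$ has height, hence radius, at most $k-1$.

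The gap is that your induction has no valid base case, and the recurrence you actually establish is $f(k)=\omega(G)+f(k-1)$ with $f(0)=\omega(G)$: when $\mathsf{rad}(T_1)=0$ the tree is a single vertex, $G_1$ is complete, and $G=G_2$ is chordal, so it needs $\omega(G)$ colors, not $0\cdot\omega(G)$. Your argument therefore yields $\chi(G)\le (k+1)\,\omega(G)$, not $k\cdot\omega(G)$. This cannot be repaired, because the stated bound is false already for $k=1$: take $G=C_5$ with vertices $v_1,\dots,v_5$ in cyclic order, $G_1=C_5+\{v_1v_3,v_1v_4\}$ and $G_2=C_5+\{v_2v_4,v_2v_5\}$. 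Both are chordal fans, their intersection is $C_5$, and the clique tree of $G_1$ (the three-vertex path with bags $\{v_1,v_2,v_3\}$, $\{v_1,v_3,v_4\}$, $\{v_1,v_4,v_5\}$) is a representation tree of radius $1$; yet $\chi(C_5)=3>2=1\cdot\omega(C_5)$. For what it is worth, the paper's own proof contains the identical off-by-one: the levels $L(X)=\min\{d(r,x):x\in V(X)\}$ range over $\{0,1,\dots,k\}$, so the partition in \autoref{lem:radius.partition} has $k+1$ classes rather than $k$, and the conclusion of \autoref{thm.bounded.radius} should read $\chi(G)\le (k+1)\,\omega(G)$. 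With that correction, your recursive write-up is a sound proof of the corrected statement.
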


The main observation that we need for the proof of \autoref{thm.bounded.radius} is 
the following:

\begin{lemma}
\label{lem:radius.partition}
Let $G$ be a chordal graph and $k$ be a positive integer. 
If $G$ has a representation $(T,\beta)$ such that $\mathsf{rad}(T) \leq k$, 
then there exists a partition $\mathcal{P}$ of $V(G)$ such that $|\mathcal{P}|\leq k$
and for each $V\in \mathcal{P}$ we have that $G[V]$ is a disjoint union of complete graphs.
\end{lemma}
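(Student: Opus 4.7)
The plan is to root $T$ at a vertex that realizes the radius and then partition $V(G)$ according to a canonical ``top'' vertex attached to each subtree. Since $\mathsf{rad}(T) \leq k$, I would choose $r \in V(T)$ with $h(T, r) \leq k$ and regard $T$ as rooted at $r$. For each $v \in V(G)$ the subtree $T_{v} := T[\beta^{-1}(v)]$ is connected, so it contains a unique vertex $\mathrm{top}(v)$ at minimum depth from $r$; set $d(v) := d_{T}(r, \mathrm{top}(v)) \in \{0, 1, \ldots, k\}$ and let $V_{l} := \{v \in V(G) : d(v) = l\}$.

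The key step is to show that each $G[V_{l}]$ is a disjoint union of complete graphs. I would argue that if $u, w \in V_{l}$ are adjacent in $G$, then $T_{u} \cap T_{w}$ contains some $x$, and both $\mathrm{top}(u)$ and $\mathrm{top}(w)$ are ancestors of $x$ in the rooted tree at depth $l$; since a vertex has a unique ancestor at any given depth, $\mathrm{top}(u) = \mathrm{top}(w) =: t$. Conversely, any two vertices of $V_{l}$ sharing a common top are adjacent in $G$, because their subtrees both contain $t$. Hence the relation ``same top'' is an equivalence on $V_{l}$ whose classes are precisely the connected components of $G[V_{l}]$, and each class is a clique on all vertices whose subtree passes through the same depth-$l$ vertex of $T$.

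The main obstacle is passing from the naive count of $k+1$ classes (one for each depth in $\{0, 1, \ldots, k\}$) down to the claimed $k$. The depth-$0$ class $V_{0} = \{v : r \in T_{v}\}$ is already a single $G$-clique, since all its subtrees share $r$, so in spirit it should be absorbable into one of the other levels; however, merging $V_{0}$ wholesale into $V_{1}$ can fail, because a subtree in $V_{0}$ that branches at $r$ into children $t_{1}, t_{2}$ would form an induced $P_{3}$ together with two vertices of $V_{1}$ whose tops are $t_{1}$ and $t_{2}$. I would address this by choosing the root cleverly---for instance, when $T$ has diameter $2k - 1$, so its center is an edge, one can root at one endpoint and measure depth from the center edge, so that effective depths fall in $\{0, \ldots, k-1\}$---and otherwise by splitting $V_{0}$ along the branches of $r$ used by each subtree, assigning each split part to the matching $V_{1}$-clique it interacts with while exploiting that subtrees in distinct branches of $T \setminus \{r\}$ are pairwise disjoint. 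Verifying that this redistribution maintains the disjoint-union-of-cliques property on the combined levels is the technical heart of the proof.
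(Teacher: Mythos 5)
Your construction and your key step are exactly those of the paper's proof: root $T$ at a vertex $r$ realizing the radius, attach to each subtree $T_{v}:=T[\beta^{-1}(v)]$ its unique minimum-depth vertex $\mathrm{top}(v)$, group the vertices of $G$ by the depth of $\mathrm{top}(v)$, and observe that two subtrees whose tops lie at the same depth intersect if and only if their tops coincide (both tops are ancestors of any common vertex, and a vertex has a unique ancestor at each depth). This correctly shows that each depth class induces a disjoint union of complete graphs, and it yields $k+1$ classes, one for each depth in $\{0,1,\dots,k\}$.

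The obstacle you flag at the end---getting from $k+1$ classes down to $k$---is not a technicality you failed to finish: it cannot be overcome, because the bound $|\mathcal{P}|\leq k$ in the statement is false. Take $k=1$, let $T$ be the star with centre $r$ and leaves $a,b$, and represent the path $P_{3}$ with vertex set $\{u,v,w\}$ and edge set $\{uv,vw\}$ by $\beta^{-1}(u)=\{a\}$, $\beta^{-1}(v)=\{r,a,b\}$, $\beta^{-1}(w)=\{b\}$. Then $\mathsf{rad}(T)=1$, yet $V(P_{3})$ admits no partition into one part inducing a disjoint union of complete graphs. This is precisely the configuration you describe when explaining why merging $V_{0}$ into $V_{1}$ fails (a root-containing subtree meeting two different branches, each also met by a level-$1$ subtree); what you did not notice is that this configuration already refutes the claimed bound, so no rerooting or redistribution of $V_{0}$ can succeed---in particular a level-$0$ subtree may use several branches of $T\setminus\{r\}$ at once, so it cannot be assigned to a single branch. (Complete $m$-ary trees of depth $k$ with the ``subtree below $t$'' representation show the same failure for every $k$, so the $k+1$ bound is tight.) The paper's own proof commits the same off-by-one: it indexes the levels by $[k]=\{1,\dots,k\}$ and asserts that $\{L_{i}\}_{i\in[k]}$ partitions the set of subtrees, which silently omits the subtrees containing $r$. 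The statement should therefore read $|\mathcal{P}|\leq k+1$, which is exactly what your argument (and the paper's) proves; \autoref{thm.bounded.radius} then goes through verbatim with the conclusion $\chi(G)\leq (k+1)\cdot\omega(G)$.
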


We show how \autoref{thm.bounded.radius} follows from \autoref{lem:radius.partition}.

\begin{proof}[Proof of \autoref{thm.bounded.radius} assuming \autoref{lem:radius.partition}]
Let $G$ be a graph as in the statement of \autoref{thm.bounded.radius}, and let 
$\mathcal{P}$ be 
a partition of $V(G_{1})$ as in the statement of \autoref{lem:radius.partition}.

We claim that for each $V\in \mathcal{P}$, we have $\chi(G[V]) \leq \omega(G)$.
Indeed, let $V\in \mathcal{P}$. Then the graph $G_{1}[V]$ is a disjoint union of complete graphs. 
Hence, the graph $G[V] = G_{1}[V] \cap G_{2}[V]$ 
is the intersection of a chordal graph with a disjoint union of cliques, 
and thus a chordal graph. Hence, $\chi(G[V]) \leq \omega(G[V]) \leq \omega(G)$.

For each $V\in \mathcal{P}$, we can color the graph $G[V]$ with a different palette of $\omega(G)$
colors, and obtain a $(k\cdot \omega(G))$-coloring of $G$. Hence $\chi(G) \leq k\cdot \omega(G)$.
\end{proof}

It remains to prove \autoref{lem:radius.partition}.

\begin{proof}[Proof of \autoref{lem:radius.partition}]
Let $r$ be a vertex of $T$ which, chosen as a root, realizes the radius of $T$.
For each vertex $v \in V(G)$, we denote by $T^{v}$ the subtree $T[\beta(v)]$ of $T$.
Furthermore, for each subtree $X$ of $T$, we denote by 
$L(X)$ the value $\min\{d(r,x):x\in V(X)\}$, 
and by $r(X)$ the root of $X$, which is the unique element of the set $\argmin_{x\in V(X)} d(r,x)$.
We refer to the value $L(X)$ as the level of $X$.

Let $S:= \{T^{v}: v\in V(G)\}$, and for each $i\in [k]$, let $L_{i}:= \{X\in S: L(X) = i\}$. 
Observe that, since $T$ has radius at most $k$, 
we have that $\{L_{i}\}_{i\in [k]}$ is a partition of $S$.

The main observation that we need is that two subtrees $X$ and $Y$ of the same level intersect 
if and only if they have the same root (and no other common vertex). 

Thus, for each level $i\in[k]$, the relation of intersection of subtrees is an equivalence relation 
in $L_{i}$, and the corresponding induced subgraph of $G$ is a disjoint union of complete graphs.

For each $i\in [k]$, let $V_{i}:=\{v\in V(G):T^{v}\in L_{i}\}$. 
Then $\mathcal{P}:=\{V_{i}:i\in[k] \text{ and } V_{i}\neq \emptyset\}$ 
is the desired partition of $V(G)$.
\end{proof}

\section{The \texorpdfstring{$k$}{TEXT}-\textsc{Chordality Problem} is \texorpdfstring{$\NP$}{TEXT}-complete for \texorpdfstring{$k\geq 3$}{TEXT}}
\label{sec:recognition}

We recall from the \autoref{sec:introduction} that for 
a fixed positive integer $k$, the $k$-\textsc{Chordality Problem} is the following:
Given a graph $G$ as an input, decide whether $\mathsf{chor}(G)\leq k$.
In this section we study the computational complexity of this problem.
For $k=1$, the $1$-\textsc{Chordality Problem} is to decide whether a given
graph is chordal, and there exists a polynomial-time algorithm for this problem 
(see, for example, \cite{gavril1975algorithm, leuker1976algorithmic}). 
In this section we prove \autoref{thm.hardness.of.chordality.intro} which we restate here.

\begin{theorem}
\label{thm.hardness.of.chordality}
For every $k\geq 3$, the $k$-\textsc{Chordality Problem} is $\NP$-complete.
\end{theorem}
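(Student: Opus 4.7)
The proof splits into $\NP$-membership and $\NP$-hardness. For membership, a succinct certificate that $\mathsf{chor}(G)\le k$ is a list of chordal supergraphs $G_1,\dots,G_k$ of $G$ on $V(G)$; its total size is $\mathcal{O}(k|V(G)|^2)$, and verification amounts to running a polynomial-time chordality test (e.g., \cite{leuker1976algorithmic}) on each $G_i$ and checking that $\bigcap_{i\in[k]}E(G_i)=E(G)$.

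For $\NP$-hardness at a fixed $k\ge 3$, the plan is to reduce from graph $k$-colorability, which is $\NP$-hard for every such $k$. Given a graph $H$, one constructs a graph $G=G(H)$ so that $\mathsf{chor}(G)\le k$ if and only if $\chi(H)\le k$, identifying the $k$ chordal completions $G_1,\dots,G_k$ of $G$ with the $k$ color classes. For each vertex $v\in V(H)$, build a vertex gadget $X_v\subseteq V(G)$ whose non-edges can be simultaneously excluded by only one completion $G_{i(v)}$; this encodes a color assignment $c(v):=i(v)$. For each edge $uv\in E(H)$, install an edge gadget linking $X_u$ and $X_v$ containing a hole whose chord patterns force $i(u)\neq i(v)$, yielding a proper coloring. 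In the converse direction, a proper $k$-coloring of $H$ gives rise to the required $k$ chordal supergraphs by grouping non-edges according to colors and filling in the remaining non-edges within each completion to achieve chordality.

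The main obstacle is the soundness direction: showing that any $k$-tuple of chordal supergraphs of $G(H)$ whose intersection is $G(H)$ must, when ``read off'' at each vertex gadget, produce a \emph{proper} $k$-coloring of $H$. The vertex gadgets must be rigid enough that, in any chordal completion, each gadget admits essentially one ``type'' of chord configuration. I would enforce this rigidity via the characterization of \autoref{thm:chordality.all.intro}---designing gadgets from convex subgraphs of small median graphs whose non-edge-separating families behave predictably---and by exploiting the Helly property of subtrees of a tree (\autoref{helly.for.subtrees}) to propagate local constraints through adjacent edge gadgets. As an alternative, one could first prove the $k=3$ case via a reduction from $3$-\textsc{Sat} (with variable gadgets built from holes admitting two natural chordal completions encoding true/false, and clause gadgets enforcing at least one satisfied literal), and then lift to arbitrary $k\ge 3$ by appending a ``chordality amplifier'' subgraph $P$ coupled to $G$ through vertex identifications on long induced cycles, chosen so that no chordal completion of $G\cup P$ can simultaneously cover non-edges on both sides, forcing any valid $k$-family to allocate $k-3$ completions to $P$ and only $3$ to the core instance.
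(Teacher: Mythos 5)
Your $\NP$-membership argument is correct and matches the paper's. The hardness half, however, is a plan rather than a proof: you correctly identify $k$-\textsc{Coloring} as the source problem and the right shape of the encoding (the index of the completion that excludes a designated non-edge of a vertex gadget should be the color), but you never define the gadgets, and you explicitly leave the soundness direction open, deferring it to unspecified ``rigid'' gadgets to be designed via \autoref{thm:chordality.all.intro} and \autoref{helly.for.subtrees}, or to an alternative $3$-\textsc{Sat} reduction with a ``chordality amplifier'' that is likewise not constructed. As written there is no reduction to verify. Your completeness direction also has a gap as stated: ``grouping non-edges according to colors'' leaves unassigned every non-edge of $G$ whose endpoints lie in different color classes, and ``filling in the remaining non-edges within each completion to achieve chordality'' must be done without re-inserting, in every completion, some non-edge of $G$ --- this is exactly the delicate point, and it is not addressed.

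The fix is that essentially no gadgetry is needed. The paper takes $G' := G \cdot K_{2}^{c}$, i.e., it replaces each vertex $v$ of $G$ by a non-adjacent pair $(v,1),(v,2)$ (with complete bipartite connections between the pairs of adjacent vertices). For soundness: if $G' = H_{1} \cap \cdots \cap H_{k}$ with each $H_{i}$ chordal, then the non-edge $\{(v,1),(v,2)\}$ is missing from some $H_{i}$; setting $f(v):=i$, any edge $uv \in E(G)$ with $f(u)=f(v)=i$ would make $\{(u,1),(u,2),(v,1),(v,2)\}$ induce a $C_{4}$ in $H_{i}$, a contradiction --- so $f$ is a proper $k$-coloring of $G$. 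For completeness, rather than building the completions by hand, the paper invokes $\mathsf{chor}(G') \leq \chi(G')$ (\autoref{thm.chordality.and.chromatic.number}, McKee--Scheinerman) together with $\chi(G \cdot K_{2}^{c}) = \chi(G \cdot K_{1}) = \chi(G)$ (\autoref{thm.lexicographic.product}, Geller--Stahl), which disposes of exactly the ``fill in the remaining non-edges'' difficulty you ran into. If you want a self-contained argument, note that for a proper coloring with classes $S_{1},\dots,S_{k}$, taking $G_{i}$ to be $G$ plus all pairs inside $V(G)\setminus S_{i}$ yields a chordal (indeed split-like) graph, and every non-edge of $G$ survives as a non-edge of $G_{i}$ for $i$ the color of either endpoint; this is the standard proof of $\mathsf{chor}(G)\leq\chi(G)$.
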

In an upcoming paper, in joint work with Therese Biedl and Taite LaGrange,
using different techniques we prove that the $2$-\textsc{Chordality Problem} is $\NP$-complete as well.

For a fixed positive integer $k$ the $k$-\textsc{Coloring Problem} is the following:
Given a graph $G$ as an input, decide whether $G$ has a $k$-coloring.

\begin{theorem}[{Karp, \cite[Main Theorem]{karp1972reducibility}}]
\label{thm.hardeness.of.coloring}
For every $k\geq 3$, the $k$-\textsc{Coloring Problem} is $\NP$-complete.
\end{theorem}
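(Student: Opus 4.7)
My plan is to establish both parts of $\NP$-completeness separately, with the hardness proof going by a reduction from $k$-\textsc{Coloring}.

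Membership in $\NP$ is routine: given $G$, one nondeterministically guesses $k$ spanning supergraphs $G_{1}, \ldots, G_{k}$ of $G$, each described by $\mathcal{O}(|V(G)|^{2})$ bits, and verifies in polynomial time that each $G_{i}$ is chordal (using any standard chordal recognition algorithm such as that of Gavril) and that $E(G) = \bigcap_{i\in [k]} E(G_{i})$; this certifies $\mathsf{chor}(G) \leq k$.

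For $\NP$-hardness, I plan to reduce from the $k$-\textsc{Coloring Problem}, which is $\NP$-complete for $k \geq 3$ by \autoref{thm.hardeness.of.coloring}. The conceptual bridge I would exploit is the observation that a decomposition $G = G_{1} \cap \cdots \cap G_{k}$ into chordal supergraphs is equivalent to assigning to each non-edge of $G$ a nonempty subset of colors from $[k]$ (the set of indices $i$ for which the non-edge remains a non-edge in $G_{i}$) such that, for each $i \in [k]$, the graph obtained from $G$ by making every non-edge whose color-set omits $i$ into an edge is chordal. Given an input graph $H$ of the $k$-\textsc{Coloring Problem}, I would build in polynomial time a graph $G_{H}$ as follows. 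For each vertex $v \in V(H)$, introduce two non-adjacent ``indicator vertices'' $a_{v}, b_{v}$, so that the indicator non-edge $\{a_{v}, b_{v}\}$ plays the role of the color of $v$ (meaning $c(v) := i$ whenever $\{a_{v}, b_{v}\} \notin E(G_{i})$ for some $i$, which is guaranteed to exist since $\{a_{v}, b_{v}\}$ is a non-edge of $G_{H}$). For each edge $\{u, v\} \in E(H)$, introduce an ``edge gadget'' attached to $a_{u}, b_{u}, a_{v}, b_{v}$ in such a way that $G_{H}$ contains an induced hole whose only available chords in $\overline{G_{H}}$ are $\{a_{u}, b_{u}\}$ and $\{a_{v}, b_{v}\}$. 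Any chordal completion must then include at least one of these two chords in every $G_{i}$, which forces that $u$ and $v$ do not share a color; equivalently, the function $c$ is a proper $k$-coloring of $H$.

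The conceptual skeleton of the reduction is thus clean; its correctness splits into a completeness direction (given a proper $k$-coloring of $H$, explicitly build $k$ chordal supergraphs of $G_{H}$ realizing $\mathsf{chor}(G_{H}) \leq k$ by colorwise adding all non-indicator non-edges and exactly the indicator non-edges whose color differs from $i$) and a soundness direction (any chordal $k$-decomposition yields, via the indicator non-edges, a proper $k$-coloring of $H$). The step I expect to be the main obstacle is the gadget design: because different edge gadgets share vertices through the indicator pairs, extraneous induced cycles can appear in $G_{H}$ that are not the intended edge-gadget holes, and these must not impose additional constraints that either make the construction ``too hard'' to decompose (breaking completeness) or permit ``cheating'' chordal decompositions bypassing the intended coloring interpretation (breaking soundness). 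The natural way to handle this is to attach all gadgets to a shared backbone — for example a common clique large enough that any unintended induced hole is easily triangulated by a single non-indicator non-edge without creating new holes in the other chordal factors — and then verify by a careful case analysis of induced cycles in $G_{H}$ that the only constraints imposed by chordality are precisely the designed coloring constraints. Once this is achieved, both directions of the reduction follow directly from the definition of $c$.
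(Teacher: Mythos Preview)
Your proposal does not address the stated theorem. The statement you were asked to prove is Karp's classical result that the $k$-\textsc{Coloring Problem} is $\NP$-complete for $k \geq 3$; the paper does not prove this at all but simply cites it from \cite{karp1972reducibility}. Your proposal instead sketches a proof of \autoref{thm.hardness.of.chordality} (that the $k$-\textsc{Chordality Problem} is $\NP$-complete), and you explicitly invoke \autoref{thm.hardeness.of.coloring} --- the very statement you were meant to establish --- as a black box in your hardness reduction. As a proof of the requested statement the proposal is therefore circular.

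If your actual intent was to prove \autoref{thm.hardness.of.chordality}, then your idea is close to the paper's but more complicated than necessary. The paper's construction is simply the lexicographic product $G' := G \cdot K_{2}^{c}$: each vertex $v$ of $G$ becomes a non-adjacent pair $(v,1),(v,2)$, and each edge $\{u,v\}$ of $G$ automatically yields an induced $4$-cycle on $\{(u,1),(u,2),(v,1),(v,2)\}$ whose only possible chords are the two indicator non-edges --- exactly the edge gadget you describe, with no backbone clique and no extraneous-hole analysis required. For the completeness direction the paper avoids building chordal supergraphs by hand: it combines the inequality $\mathsf{chor}(H) \leq \chi(H)$ of McKee--Scheinerman with Geller--Stahl's $\chi(G \cdot K_{2}^{c}) = \chi(G \cdot K_{1}) = \chi(G)$ to obtain $\mathsf{chor}(G') \leq \chi(G) \leq k$ in one line, while the soundness direction is your argument verbatim.
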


We immediately see that for every positive integer $k$, the $k$-\textsc{Chordality Problem} is in $\NP$. We prove \autoref{thm.hardness.of.chordality} by proving a polynomial-time reduction of the $k$-\textsc{Coloring Problem} to the $k$-\textsc{Chordality Problem}. We first state some preliminary definitions and results.

\begin{theorem}[{McKee and Scheinerman, \cite[Corollary 4]{mckee1993chordality}}]
\label{thm.chordality.and.chromatic.number}
    Let $G$ be a graph. Then $\mathsf{chor}(G) \leq \chi(G)$.
\end{theorem}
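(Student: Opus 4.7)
Let $G$ be a graph, set $k := \chi(G)$, and fix a proper $k$-coloring $c\colon V(G)\to [k]$ with colour classes $V_{1},\ldots,V_{k}$. My plan is to construct, for each $i\in [k]$, a chordal supergraph $G_{i}$ of $G$ on the same vertex set with $G = \bigcap_{i\in [k]} G_{i}$; this witnesses $\mathsf{chor}(G) \leq k = \chi(G)$.

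The construction I would use is: for each $i\in [k]$, let $G_{i}$ have vertex set $V(G)$ and edge set $E(G_{i}) := E(G) \cup \binom{V(G)\setminus V_{i}}{2}$. Thus $G_{i}$ turns $K_{i} := V(G)\setminus V_{i}$ into a clique, while $V_{i}$ itself remains an independent set in $G_{i}$ (it was independent in $G$, and every added edge lies strictly inside $K_{i}$). To see that $G_{i}$ is chordal, suppose towards a contradiction that it contains a hole $H$. No two consecutive vertices of $H$ can lie in $V_{i}$, so at least $\lceil |V(H)|/2\rceil \geq 2$ vertices of $H$ lie in $K_{i}$; but any two vertices of $K_{i}$ are joined by an edge in $G_{i}$, so non-consecutive $K_{i}$-vertices in $H$ would create a chord, and among three or more $K_{i}$-vertices at least one non-consecutive pair must occur. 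Hence $H$ has exactly two $K_{i}$-vertices and they are consecutive in $H$; but then the remaining $|V(H)|-2 \geq 2$ vertices of $H$ all lie in $V_{i}$ and are traversed consecutively along $H$, forcing a pair of adjacent vertices inside the independent set $V_{i}$, a contradiction.

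It remains to verify $\bigcap_{i\in [k]} E(G_{i}) = E(G)$; only the reverse inclusion needs a check. Given any non-edge $\{u,v\}$ of $G$, if $c(u) = c(v) = j$ then $\{u,v\}\subseteq V_{j}$ and hence $\{u,v\}\notin E(G_{j})$, since the edges added to form $G_{j}$ lie entirely inside $K_{j} = V(G)\setminus V_{j}$. If $c(u)\neq c(v)$, picking $j := c(u)$ again places $u\in V_{j}$ and yields $\{u,v\}\notin E(G_{j})$. Either way some $G_{j}$ separates the non-edge $\{u,v\}$, so the intersection equals $E(G)$. The only genuinely creative step in the proof is identifying the right supergraph per colour class — namely the split-graph completion that turns $V(G)\setminus V_{i}$ into a clique; once this is spotted, both the chordality of each $G_{i}$ and the equality $G = \bigcap_{i\in[k]} G_{i}$ follow from elementary case analysis on the coloring.
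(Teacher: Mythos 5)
Your proof is correct. The paper does not prove this statement itself --- it cites it as Corollary 4 of McKee and Scheinerman --- but your argument is essentially the standard one for that result: for each colour class $V_{i}$ you complete $V(G)\setminus V_{i}$ to a clique, observe that the resulting split graph (clique plus stable set) is chordal, and note that every non-edge of $G$ has an endpoint in some $V_{j}$ and hence survives as a non-edge of $G_{j}$. All three steps (chordality of each $G_{i}$, the containment $E(G)\subseteq\bigcap_{i}E(G_{i})$, and the reverse inclusion) are verified correctly, so the construction does witness $\mathsf{chor}(G)\leq\chi(G)$.
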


Given two graphs $G$ and $H$, the \defin{lexicographic product} of $G$ with $H$, denoted by $G\cdot H$, is the graph which has as vertices the elements of the set $V(G)\times V(H)$, and where two vertices $(x_{1},y_{1})$ and $(x_{2},y_{2})$ are adjacent if and only if $\{x_{1}, x_{2}\}\in E(G)$, or $x_{1}=x_{2}$ and $\{y_{1}, y_{2}\}\in E(H)$. The graph $G\cdot H$ can be though as the graph that we obtain if in $G$ we "substitute" a copy of $H$ for each vertex of $G$.

\begin{theorem}[{Geller and Stahl, \cite[Theorem 3]{geller1975chromatic}}]
\label{thm.lexicographic.product}
Let $G$ and $H$ be two graphs. If $\chi(H)=n$, then $\chi(G \cdot H)= \chi(G\cdot K_{n})$.
\end{theorem}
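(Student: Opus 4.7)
The plan is to establish the equality $\chi(G \cdot H) = \chi(G \cdot K_n)$ by proving the two inequalities separately. The upper bound $\chi(G \cdot H) \le \chi(G \cdot K_n)$ requires only that $\chi(H) \le n$, and follows from exhibiting a graph homomorphism $G \cdot H \to G \cdot K_n$ and pulling colorings back along it. The lower bound $\chi(G \cdot H) \ge \chi(G \cdot K_n)$ uses the full strength of the hypothesis that $\chi(H) = n$ (not merely $\chi(H) \le n$), via the observation that every proper coloring of $G \cdot H$ must spend at least $n$ colors on each ``column'' $\{v\} \times V(H)$ because that column induces a copy of $H$.

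For the upper bound, I would fix a proper $n$-coloring $f \colon V(H) \to [n]$ and define $\phi \colon V(G \cdot H) \to V(G \cdot K_n)$ by $\phi(v, h) = (v, f(h))$. A quick case analysis on the definition of adjacency in the lexicographic product shows $\phi$ is a homomorphism sending adjacent vertices to distinct adjacent vertices: if $v_1 v_2 \in E(G)$ the image vertices are adjacent regardless of the second coordinate, and if $v_1 = v_2$ with $h_1 h_2 \in E(H)$ then $f(h_1) \ne f(h_2)$ makes the images adjacent in $G \cdot K_n$. Composing any proper coloring of $G \cdot K_n$ with $\phi$ yields a proper coloring of $G \cdot H$ with the same number of colors.

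For the lower bound, I would start from a proper $k$-coloring $c$ of $G \cdot H$ with $k = \chi(G \cdot H)$. For each $v \in V(G)$, the restriction of $c$ to $\{v\} \times V(H)$ is a proper coloring of a copy of $H$, so it uses at least $\chi(H) = n$ distinct colors; pick any $n$ of them and enumerate them as $s^v_1, \ldots, s^v_n$. Define $c' \colon V(G \cdot K_n) \to [k]$ by $c'(v, i) := s^v_i$. To check $c'$ is proper I would split on the type of edge in $G \cdot K_n$: if $v_1 v_2 \in E(G)$, then in $G \cdot H$ every vertex of $\{v_1\} \times V(H)$ is adjacent to every vertex of $\{v_2\} \times V(H)$, forcing disjoint color palettes on the two columns, so $s^{v_1}_i \ne s^{v_2}_j$; and if $v_1 = v_2$ and $i \ne j$, the values $s^{v_1}_i$ and $s^{v_1}_j$ are distinct entries of a chosen $n$-element list.

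The only real subtlety, and what I expect to be the main (mild) obstacle, is the lower bound: one must exploit simultaneously that the columns are induced copies of $H$ (yielding the lower bound of $n$ colors per column) and that columns over adjacent $G$-vertices are joined completely (yielding disjointness of column palettes). Both properties are immediate from the definition of the lexicographic product, so beyond careful bookkeeping no extra machinery is required.
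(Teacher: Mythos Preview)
The paper does not supply its own proof of this theorem; it is quoted from Geller and Stahl \cite{geller1975chromatic} and used as a black box in the reduction of \autoref{sec:recognition}. There is therefore nothing in the paper to compare your argument against.

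That said, your proposal is correct and is essentially the standard argument. Both directions are fine as written: the homomorphism $\phi(v,h)=(v,f(h))$ really does carry edges of $G\cdot H$ to edges of $G\cdot K_n$, giving $\chi(G\cdot H)\le \chi(G\cdot K_n)$; and for the converse, the two facts you isolate---that each column $\{v\}\times V(H)$ induces a copy of $H$ (hence receives at least $n$ colors), and that columns over adjacent $G$-vertices are completely joined (hence receive disjoint color sets)---are exactly what is needed to manufacture a proper $k$-coloring of $G\cdot K_n$ from one of $G\cdot H$. No gaps.
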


\begin{proposition}
\label{prop.reduction}
    Let $G$ be a graph. Then $\chi(G)\leq k$ if and only if $\mathsf{chor}(G \cdot K_{2}^{c})\leq k$.
\end{proposition}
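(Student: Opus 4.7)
The plan is to handle the two implications separately. The forward implication follows immediately from the two theorems stated just before the proposition. Indeed, $K_2^c$ is edgeless, so $\chi(K_2^c)=1$, and hence Theorem (Geller–Stahl) gives $\chi(G\cdot K_2^c)=\chi(G\cdot K_1)=\chi(G)$; combining this with the McKee–Scheinerman inequality $\mathsf{chor}\leq \chi$ yields $\mathsf{chor}(G\cdot K_2^c)\leq \chi(G\cdot K_2^c)\leq k$.

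For the converse I would extract a $k$-coloring of $G$ directly from a chordality-$k$ witness for $G\cdot K_2^c$. So let $H_1,\ldots,H_k$ be chordal graphs on $V(G\cdot K_2^c)$ with $\bigcap_{i\in[k]}H_i = G\cdot K_2^c$, and for each $v\in V(G)$ denote its two copies in $V(G\cdot K_2^c)$ by $v^1$ and $v^2$. Since $K_2^c$ has no edges, $\{v^1,v^2\}$ is a non-edge of $G\cdot K_2^c$, so there is at least one index $i\in[k]$ for which $v^1 v^2\notin E(H_i)$; I would define $c(v)$ to be the least such $i$, producing a function $c\colon V(G)\rightarrow [k]$.

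The crux of the argument is then to show that $c$ is a proper coloring. Suppose, for contradiction, that $uv\in E(G)$ and $c(u)=c(v)=i$. By the definition of the lexicographic product, all four pairs $u^j v^l$ with $j,l\in\{1,2\}$ are edges of $G\cdot K_2^c$, and hence belong to $E(H_i)$; on the other hand $u^1 u^2$ and $v^1 v^2$ are non-edges of $H_i$ by the choice of $i$. Consequently $u^1, v^1, u^2, v^2$ induces a $4$-hole in $H_i$, contradicting the chordality of $H_i$. I do not expect any serious obstacle: the proof is short, and the only point requiring a brief verification is that the four vertices induce exactly a $C_4$ rather than something denser, which is immediate from the fact that no intra-copy edges $u^1 u^2, v^1 v^2$ exist in the product.
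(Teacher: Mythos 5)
Your proof is correct and follows essentially the same route as the paper's: the forward direction combines the Geller--Stahl and McKee--Scheinerman theorems exactly as in the paper, and the converse defines the coloring by picking, for each $v$, an index $i$ with $v^1v^2\notin E(H_i)$ and derives a $4$-hole in $H_i$ from a monochromatic edge. The only (harmless) difference is that you fix the \emph{least} such index and explicitly check that the four vertices induce a $C_4$, a detail the paper leaves implicit.
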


\begin{proof}[Proof of \autoref{prop.reduction}]
For the forward direction: By \autoref{thm.chordality.and.chromatic.number} and \autoref{thm.lexicographic.product} we have that $\mathsf{chor}(G\cdot K_{2}^{c}) \leq \chi(G\cdot K_{2}^{c}) = \chi(G\cdot K_{1}) = \chi(G) \leq k$.

For the reverse direction: Suppose that $\mathsf{chor}(G\cdot K_{2}^{c})\leq k$ and let $H_{1},\ldots ,H_{k}$ be chordal graphs such that  $G\cdot K_{2}^{c} = H_{1} \cap \ldots \cap H_{k}$. Let $V(K_{2}^{c}) = \{1,2\}$. Let $ f\colon V(G) \rightarrow [k]$ be defined as follows: $f(v) = i\in [k]$, where $i$ is chosen so that it satisfies $\{(v,1), (v,2)\} \notin E(H_{i})$. We claim that $f$ is a proper $k$-coloring of $G$. Suppose not. Let $\{u,v\}\in E(G)$ be such that $f(u)=f(v)=:i$. Then we have that $\{(v,1), (v,2)\} \notin E(H_{i})$ and $\{(u,1), (u,2)\} \notin E(H_{i})$. Thus, $H_{i}[\{(v,1), (v,2), (u,1), (u,2)\}]$ is a hole in $H_{i}$ which is a contradiction. 
\end{proof}

\begin{corollary}
\label{corol.reduction.eq}
Let $G$ be a graph. Then, in polynomial-time in the size of $G$ 
we can construct a graph $G'$ such that the 
following hold: $\chi(G)=k$ if and only if $\mathsf{chor}(G')=k$.
\end{corollary}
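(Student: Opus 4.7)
The plan is simply to take $G' := G\cdot K_{2}^{c}$, which is constructible in polynomial time in the size of $G$: the graph $G'$ has $2|V(G)|$ vertices and $4|E(G)|+0$ edges (each edge of $G$ gives rise to four edges of $G'$, and each vertex of $G$ contributes no extra edge since $K_{2}^{c}$ has none), so we may output its adjacency representation in time $O(|V(G)|^{2})$.

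Next I would verify that $\chi(G)=\mathsf{chor}(G')$ as integers, from which the claimed equivalence for each fixed $k$ follows immediately. For this I invoke \autoref{prop.reduction} with parameter $k$ ranging over the positive integers. Explicitly, let $a:=\chi(G)$ and $b:=\mathsf{chor}(G')$. Applying \autoref{prop.reduction} with $k=a$ gives $\chi(G)\leq a$ which yields $\mathsf{chor}(G')\leq a$, hence $b\leq a$. Applying \autoref{prop.reduction} with $k=b$ gives $\mathsf{chor}(G')\leq b$ which yields $\chi(G)\leq b$, hence $a\leq b$. Therefore $a=b$, and in particular $\chi(G)=k$ if and only if $\mathsf{chor}(G')=k$ for every positive integer $k$.

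There is essentially no obstacle here: the entire content of the corollary is packaged into \autoref{prop.reduction}, and the present statement merely observes that the biconditional at every level $k$ forces equality of $\chi(G)$ and $\mathsf{chor}(G')$, together with the trivial observation that the lexicographic product with $K_{2}^{c}$ can be computed in polynomial time. I would then note the immediate consequence, combined with \autoref{thm.hardeness.of.coloring}, that for each $k\geq 3$ the map $G\mapsto G'$ is a polynomial-time reduction from the $k$-\textsc{Coloring Problem} to the $k$-\textsc{Chordality Problem}, establishing \autoref{thm.hardness.of.chordality}.
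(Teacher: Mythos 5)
Your proposal is correct and matches the paper's (implicit) derivation exactly: the paper also intends $G' := G\cdot K_{2}^{c}$ and obtains the corollary directly from \autoref{prop.reduction}, with the biconditional at levels $k=\chi(G)$ and $k=\mathsf{chor}(G')$ forcing $\chi(G)=\mathsf{chor}(G')$. The polynomial-time constructibility observation is likewise the intended (trivial) one.
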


Now \autoref{thm.hardness.of.chordality} follows immediately by \autoref{thm.hardeness.of.coloring}
and \autoref{corol.reduction.eq}.

\printbibliography

\end{document}